\def\l@subsection{\@tocline{2}{0pt}{2.5pc}{5pc}{}}
\numberwithin{equation}{section}
\newtheorem{claim}{Claim}
\newtheorem*{theorem*}{Theorem}
\newtheorem*{definition*}{Definition}
\newtheorem{theorem}{Theorem}[section]
\newtheorem{lemma}[theorem]{Lemma}
\newtheorem{proposition}[theorem]{Proposition}
\newtheorem{corollary}[theorem]{Corollary}
\newtheorem{conjecture}[theorem]{Conjecture}
\newtheorem{notation}[theorem]{Notation}
\theoremstyle{definition}
\newtheorem{definition}[theorem]{Definition}
\newtheorem{def/prop}[theorem]{Definition/Proposition}
\theoremstyle{remark}
\newtheorem{remark}[theorem]{Remark}
\newtheorem{Construction}[theorem]{Construction}
\definecolor{darkred}{rgb}{1,0,0} 
\definecolor{darkgreen}{rgb}{0,1,0}
\definecolor{darkblue}{rgb}{0, 0, 1}
\definecolor{darkpurple}{RGB}{170, 51, 106}
\DeclareMathOperator{\Hom}{Hom}
\DeclareMathOperator{\rig}{rig}
\DeclareMathOperator{\can}{can}
\DeclareMathOperator{\Nm}{Nm}
\DeclareMathOperator{\Prym}{Prym}
\DeclareMathOperator{\id}{1}
\DeclareMathOperator{\Spec}{Spec}
\DeclareMathOperator{\Ob}{Ob}
\DeclareMathOperator{\Fr}{Fr}
\DeclareMathOperator{\Triv}{Triv}
\DeclareMathOperator{\charac}{char}
\DeclareMathOperator{\Tr}{Tr}
\DeclareMathOperator{\Aut}{Aut}
\DeclareMathOperator{\tr}{tr}
 \newcommand{\SLn}{\mathrm{SL}_n}
 \newcommand{\GLn}{\mathrm{GL}_n}
\newcommand{\PGLn}{\mathrm{PGL}_n}
\newcommand{\Aa}{\mathcal{A}}
\newcommand{\Bb}{\mathcal{B}}
\newcommand{\Cc}{\mathcal{C}}
\newcommand{\Dd}{\mathcal{D}}
\newcommand{\Gg}{\mathcal{G}}
\newcommand{\Hh}{\mathcal{H}}
\newcommand{\Ll}{\mathcal{L}}
\newcommand{\Mm}{\mathcal{M}}
\newcommand{\Oo}{\mathcal{O}}
\newcommand{\Pp}{\mathcal{P}}
\newcommand{\Tt}{\mathcal{T}}
\newcommand{\Vv}{\mathcal{V}}
\newcommand{\Xx}{\mathcal{X}}
\newcommand{\CC}{\mathbb{C}}
\newcommand{\EE}{\mathbb{E}}
\newcommand{\GG}{\mathbb{G}}
\newcommand{\ZZ}{\mathbb{Z}}
\newcommand{\LL}{\mathbb{L}}
\newcommand{\FF}{\mathbb{F}}
\newcommand{\PP}{\mathbb{P}}
\newcommand{\QQ}{\mathbb{Q}}
\newcommand{\NN}{\mathbb{N}}
\newcommand\Quotient[2]{
\mathchoice
{
\text{\raise1ex\hbox{\thinspace $#1$}\Big{/} \lower1ex\hbox{$#2$} \thinspace}%
}
{
#1\,/\,#2
}
{
#1\,/\,#2
}
{
#1\,/\,#2
}
}
\newcommand\GIT[2]{
\mathchoice
{
\text{\raise1ex\hbox{\thinspace $#1$}\Big{/}\!\!\!\!\Big{/} \lower1ex\hbox{$#2$} \thinspace}%
}
{
#1\,/\,#2
}
{
#1\,/\,#2
}
{
#1\,/\,#2
a       }
}
\newif\ifcomments
\newcommand{\authorcomment}[2]{\tikz[baseline=(X.base)]\node [draw=#1,fill=#1!40,semithick,rectangle,inner sep=2pt, rounded corners=3pt] (X) {#2};}
\newcommand{\bl}[1]{\authorcomment{orange}{Basile:} \textcolor{orange}{\textit{#1}}}
\newcommand{\authorcomment}[2]{}
\newcommand{\bl}[1]{}
\begin{document}

\author{Elsa Maneval}
\title{Non-archimedean topological mirror symmetry for $\mathrm{SL}_n$ and $\mathrm{PGL}_n$ Higgs bundles}

\begin{abstract}
 The Hausel--Thaddeus conjectures concern topological mirror symmetry between moduli spaces of $\mathrm{SL}_n$ and $\mathrm{PGL}_n$ Higgs bundles on a curve. A non-archimedean approach was introduced by Groechenig, Wyss and Ziegler, proving the conjecture for coprime rank and degree. This article is concerned with its generalisation to the non-coprime case. We treat both the classical ($D=K$) and meromorphic ($D>K$) settings. We prove an equality of $p$-adic volumes twisted by gerbes between moduli spaces of $\mathrm{SL}_n$ and $\PGLn$ Higgs bundles of arbitrary degree. In the meromorphic case, building on results of Maulik and Shen, we show that these twisted $p$-adic volumes are related to intersection cohomology. We also conjecture a connection between these $p$-adic volumes and $BPS$ cohomology.
\end{abstract}

 \maketitle



\section{Introduction}



\subsection{Mirror symmetry for \texorpdfstring{$\SLn$}{SL(n)} and \texorpdfstring{$\PGLn$}{PGL(n)} Higgs bundles}

For a fixed rank $n$ and arbitrary degrees $d$ and $e$, we consider the dual Hitchin system of moduli of semi-stable (twisted) $\SLn$ and $\PGLn$-Higgs bundles of degree $d$ and $e$ over a curve $\Cc$. 

    \hfil
    \begin{tikzcd}
        M_{\SLn}^d \arrow[dr, swap, "h_{\SLn}"] & & M_{\PGLn}^e \arrow[dl, "h_{\PGLn}"] \\
        & \Aa = \underset{{i=2}}{\overset{n}{\bigoplus}} H^0(\Cc, K_\Cc^i)&
    \end{tikzcd}


Hausel and Thaddeus \cite{HT} showed that this dual Hitchin fibration satisfies a Strominger-Yau-Zaslow-type duality \cite{SYZ} and, in the coprime case, a topological mirror symmetry in rank $2$ and $3$. This symmetry is expressed in terms of stringy Hodge numbers twisted by a gerbe $\alpha$ :
$$ h^{p,q}(M_{\SLn}^d) = h^{p,q}_{\textnormal{st}, \alpha^d} (M_{\PGLn}^e).$$

The topological mirror symmetry in the coprime case was then proven in \cite{GWZ} and \cite{MSendoscopicHTcoprime} in arbitrary rank. Both proofs work for a larger class of moduli spaces of Higgs bundles, namely meromorphic Higgs bundles. 

The proof in \cite{GWZ} uses $p$-adic Hodge theory and Chebotarev density theorem to reformulate the equality of Hodge numbers in terms of point counting over finite fields. Then, these counts are computed by so-called $p$-adic integrals. The topological mirror symmetry is then deduced from an equality of $p$-adic integrals, which was proven using, among others, the $SYZ$-type symmetry.  

As untwisted $\SLn$-bundles are always of degree $0$, the non-coprime case is also of interest, but harder to understand due to the existence of strictly semi-stable objects. A topological mirror symmetry was conjectured in \cite{hauselsurvey} in the non-coprime case. For the meromorphic case, Maulik and Shen proved it in terms of intersection cohomology \cite{MSnoncoprime}. Mauri also obtained a topological mirror symmetry for intersection cohomology of classical rank $2$ Higgs bundles \cite{Mauri2021}. Recently, a version of these conjectures was formulated in terms of $BPS$ cohomology in \cite{Davison}.

 
 The purpose of this article is to establish certain equalities of $p$-adic integrals (\autoref{mainthm}) that we call non-archimedean topological mirror symmetry, generalising the key intermediate step in \cite{GWZ} to arbitrary rank and degree. 



\subsection{Non-archimedean topological mirror symmetry}
 Our base scheme is $S = \Spec \Oo_F$ where $\Oo_F$ is the ring of integers of a finite extension $F$ of $\QQ_p$ for a prime $p$. We fix a curve $\Cc \longrightarrow S$ and a divisor $D$ on $\Cc$. 
 
 Let $d, e \in \ZZ$ denote degrees on $\SLn$ and $\PGLn$ sides respectively. For $L \in \Pp ic^d(\Cc)$, we consider the coarse moduli space $M^{L}_{\SLn}$ of Higgs bundles over $\Cc$ of determinant $L$, with traceless Higgs field valued in $D$. Moreover, $M_{\PGLn}^e$ is the coarse moduli space of $\PGLn$-Higgs bundles of degree $e$. In \autoref{slngerbe} and \ref{pglngerbe} we construct gerbes $\alpha$ and $\alpha_N$ for $N \in \Pp ic^1(\Cc)$  on the stacks of $\SLn$ and $\PGLn$ Higgs bundles respectively. 

For a generic $a \in \Aa(S)$ corresponding to a spectral curve $\tilde{\Cc} \xlongrightarrow{\pi} \Cc$, the BNR correspondence \cite{BNR89} states that $h_{\SLn}^{-1}(a) \subset \Mm^L_{\SLn}$ is identified with $\Pp ic_{\tilde{\Cc}/S}^{d'}$, where $d' = d -\deg D \cdot \frac{n(n-1)}{2}$. Similarly, we write $e' = e - \deg D \cdot  \frac{n(n-1)}{2}$.

In \autoref{measure}, we introduce measures on certain subspaces $M^L_{\SLn} (\Oo_F)^\sharp \subset M^L_{\SLn}(\Oo_F)$ and $M^e_{\PGLn} (\Oo_F)^\sharp \subset M^e_{\PGLn}(\Oo_F)$. We define integrable functions $f_{\alpha} \colon M(\Oo_F)^\sharp \longrightarrow \CC$ associated to gerbes in \autoref{sub gerbe}. Our main result is the following equality of $p$-adic integrals.

\begin{theorem}[Non-archimedean topological mirror symmetry, \autoref{mainthm}] \label{thm1}
 Let $D \sim K_\Cc$ or ${\deg D > 2g-2}$. We have an equality 
     $$ \int_{M_{\SLn}^L(\Oo_F)^\sharp} f_{\alpha}^{e'} \ \mu_{\can, \SLn} = \int_{M_{\PGLn}^e(\Oo_F)^\sharp} f_{\alpha_{N}}^{d'} \cdot f_L \ \mu_{\can, \PGLn} $$
     where $f_L$ is a function which depends on $[L \otimes \Dd \otimes N^{-d'}] \in \Pp ic^0_{\Cc/S}(S)\big/\Pp ic^0_{\Cc/S}(S)^{\times n}$.  
\end{theorem}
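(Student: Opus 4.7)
The plan is to follow the template of \cite{GWZ} and reduce the asserted equality of $p$-adic integrals on the full moduli spaces to a fiberwise equality over the Hitchin base $\Aa$. The subspaces $M(\Oo_F)^\sharp$ will be the preimages under the Hitchin map of a suitable dense open $\Aa^\diamondsuit(\Oo_F) \subset \Aa(\Oo_F)$ (for example, the locus of integral spectral curves), and a Fubini-type decomposition of the canonical measures $\mu_{\can, \SLn}$ and $\mu_{\can, \PGLn}$ will let me rewrite both sides as integrals over $\Aa^\diamondsuit(\Oo_F)$ of fiber integrals. It will then suffice to establish, for each $a$ in this open, the pointwise identity
\begin{equation*}
  I_{\SLn}(a) \;=\; I_{\PGLn}(a) \cdot f_L(a).
\end{equation*}

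\textbf{Fiberwise description via BNR.} For generic $a \in \Aa(\Oo_F)$ corresponding to a smooth spectral curve $\tilde{\Cc} \xlongrightarrow{\pi} \Cc$, the BNR correspondence identifies the $\SLn$-fiber inside $M^L_{\SLn}$ with a Prym torsor of line bundles on $\tilde\Cc$ of degree $d'$ whose norm equals $L \otimes \Dd^{-1}$, while the $\PGLn$-fiber is the quotient $\Pp ic^{e'}(\tilde\Cc)/\pi^* \Pp ic^0(\Cc)$. These two abelian torsors are Langlands dual under the SYZ-type duality of \cite{HT}, and this duality is the geometric input that allows the comparison of their $p$-adic volumes.

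\textbf{Gerbes as characters.} Via the standard interpretation of $\mu_n$-gerbes on an abelian variety as characters of the dual, the restrictions of $\alpha^{e'}$ and $\alpha_N^{d'}$ to generic Hitchin fibers correspond to characters on the respective dual Pryms; the functions $f_{\alpha^{e'}}$ and $f_{\alpha_N^{d'}}$ then reduce, on each connected component of the fiber, to that character evaluated at the component's class. The key identification is that, under the Langlands/SYZ duality, these two characters agree up to a translation of torsor base points measured precisely by the class $L \otimes \Dd \otimes N^{-d'}$. Because only the image of this class in $\Pp ic^0_{\Cc/S}(S)\big/\Pp ic^0_{\Cc/S}(S)^{\times n}$ pairs non-trivially with $n$-torsion via the Weil pairing, the resulting translation contributes the correction factor $f_L(a)$ in the form stated.

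\textbf{Main obstacle.} The principal difficulty, absent from the coprime setting of \cite{GWZ}, is the presence of strictly semi-stable loci on both sides: in arbitrary degree the Hitchin fibers are no longer torsors on the nose, and the character-sum reduction above fails over their singular locus. Isolating the locus where the argument goes through is the purpose of $M(\Oo_F)^\sharp$, and the delicate technical point will be to check that $\Aa^\diamondsuit$ can be chosen large enough (while remaining compatible under the dual Hitchin system) that its preimages on the two sides carry essentially all of the relevant twisted $p$-adic volume and that no contribution from the semi-stable boundary is lost. A secondary subtlety is bookkeeping the degree shifts $d \mapsto d'$ and $e \mapsto e'$ so that the normalisations of the gerbes $\alpha$ and $\alpha_N$ constructed in \autoref{slngerbe} and \autoref{pglngerbe} match the Prym torsor structure, yielding the precise dependence of $f_L$ on $[L \otimes \Dd \otimes N^{-d'}]$ modulo $n$-th powers.
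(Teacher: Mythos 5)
Your overall strategy is the paper's: reduce to a fibrewise comparison over the Hitchin base via Fubini, interpret the gerbe functions on each smooth Hitchin fibre as characters via Tate duality using the two SYZ-type propositions, and recognise that the correction $f_L$ arises from a translation of the $\Prym$-torsor measured by $[L\otimes\Dd\otimes N^{-d'}]$ modulo $n$-th powers. However, the proposal misidentifies where the real work lies and is silent on two steps that the fibrewise identity genuinely needs.

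First, the strictly semi-stable locus that you single out as the ``main obstacle'' is in fact handled with a routine measure-zero argument: over $\Aa^{\mathrm{sm}}$ (smooth, irreducible spectral curves) every Higgs bundle in the fibre is stable, so both Hitchin fibres are honest torsors there, and the complement $\Aa\setminus\Aa^{\mathrm{sm}}$ has positive codimension, hence its preimage has zero $p$-adic volume. What is genuinely new compared to the coprime setting of \cite{GWZ} is not the semistable boundary but (a) the fact that $M^L_{\SLn}$ and $M^e_{\PGLn}$ are no longer smooth, so the canonical $p$-adic measure cannot be built from a global top form on a smooth (DM) stack; the paper uses the construction of \cite{COW} for normal Artin stacks with a good moduli map, and one needs the normality supplied by the local-model results of \cite{Tanguy} when $D=K_\Cc$; and (b) the degree bookkeeping and the second SYZ-type symmetry producing the $\Prym^{L_0}$ twist, which you do identify but treat as secondary.

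Second, once you have reduced to an identity over each $a\in\Aa^{\mathrm{sm}}(F)\cap\Aa(\Oo_F)$, the pointwise equality does not follow just from ``the characters agree up to a translation.'' You must run a case analysis: when both fibres have an $F$-point, the integrands are identically $1$ and the identity becomes an equality of $p$-adic volumes of the dual Pryms; this uses that the quotient map $\Phi\colon\Prym^0\to\Prym^0/\Gamma$ is a self-dual isogeny, so that $|\Prym^0/\Gamma(F)/\Phi(\Prym^0(F))|=|\ker\Phi(F)|$ and the translation-invariant forms constructed by pulling back along $\Phi$ give matching measures. When exactly one side has no $F$-point, the integrand on the other side is a non-trivial character of a compact abelian group and integrates to zero (this is where the cohomological interpretation of the empty torsor as a non-zero class in $H^1(F,\Prym^0)$ is used). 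Without spelling out these asymmetric cases and the isogeny volume comparison, the fibrewise identity $I_{\SLn}(a)=I_{\PGLn}(a)\cdot f_L(a)$ is not established. These are exactly the points filled in by \cite[Lemma 6.15, Proposition 3.16]{GWZ} in the coprime case; your proposal implicitly invokes the GWZ template but does not verify that those steps survive intact with the extra factor $\exp(2\pi i\langle\beta_{L_e}(-),\delta(L_0)\rangle_\Gamma)$ and with the non-smooth sources.
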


This non-archimedean topological mirror symmetry is relating $M_{\SLn}^{L}$ and $M_{\PGLn}^{e}$ for arbitrary degrees $d$ and $e$, without assuming $(n,d)=(n,e)$ as in \cite[Theorem 0.2 (b)]{MSnoncoprime}. It suggests the existence of a more general topological mirror symmetry statement for $\SLn$ and $\PGLn$, using twists by gerbes. For that we need to interpret the $p-$adic integrals in terms of cohomological invariants. 

\subsection{Interpretations of \texorpdfstring{$p$}{p}-adic integrals} In general, it still remains to interpret $p$-adic integrals in terms of cohomological invariants of the moduli spaces over $\CC$. We can do it in the coprime case, following \cite{GWZ}. 

Let the base scheme be $S = \Spec \CC$. As before, we fix a smooth projective curve $\Cc$ with line bundles $D$ and $N$, $\deg D > 2g -2 $ or $D=K_\Cc$ and $\deg N = 1$. 

The finite abelian group $\Gamma = \Pp ic^0(\Cc)[n]$ acts on $M^L_{\SLn}$ by tensorisation. It induces an isotypical decomposition in cohomology. Moreover, for any $L_e \in \Pp ic^e(\Cc)$, $M^e_{\PGLn} = M^{L_e}_{\SLn}/\Gamma$. 

\begin{corollary}[Corollary \ref{corTMScoprime}]
Let $d, e \in \ZZ$ such that $(d,n)=(e,n)=1$. Let $L \in \Pp ic^d(\Cc)$. The following equality of stringy  twisted $E$-polynomials (Definition \ref{defi stringy}) holds :
$$ E(M_{\SLn}^{L}) = E_{\textnormal{st}, \alpha_N^{d'}}(M_{\PGLn}^{e}). $$
Moreover, for any $\gamma \in \Gamma = \Pp ic^0(\Cc)[n]$ and $L_e \in \Pp ic^e(\Cc)$, there is a refinement : 
$$ E(M_{\SLn}^{L})_{\kappa} = E(M_{\SLn}^{L_e, \gamma})_{\kappa^{d'}} $$
where $M_{\SLn}^{L_e, \gamma}$ denote the $\gamma$-fixed locus in $M_{\SLn}^{L_e}$ and $\kappa = \omega(\gamma)$ as in Remark \ref{rmk weil pairing}.
\end{corollary}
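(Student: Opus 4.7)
The plan is to deduce the corollary from \autoref{mainthm} by specialising the $p$-adic identity to the coprime setting and translating it into a cohomological statement over $\CC$, following the blueprint of \cite{GWZ}.

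\textbf{Spreading out and the $p$-adic to $E$-polynomial dictionary.} The moduli spaces $M_{\SLn}^{L}$, $M_{\PGLn}^e$ together with the gerbes $\alpha$ and $\alpha_N$ spread out to integral models over $\Spec\Oo_F$ for $F$ a sufficiently large finite extension of $\QQ_p$, to which \autoref{mainthm} applies. In the coprime case stability coincides with semistability, the $\sharp$-loci exhaust the full integral points, $M_{\SLn}^{L}$ is a smooth projective variety, and $M_{\PGLn}^e$ is the smooth orbifold quotient $[M_{\SLn}^{L_e}/\Gamma]$. Weil's formula turns the canonical $p$-adic volume into the normalised $\FF_q$-point count; the gerbe functions $f_\alpha, f_{\alpha_N}$ encode the appropriate twists. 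On the $\SLn$-side, the $\FF_q$-points land in a scheme and $\mathrm{Br}(\FF_q) = 0$, so the $\alpha^{e'}$-twist is invisible at $\FF_q$-points and the LHS of \autoref{mainthm} reads as the bare count; on the $\PGLn$-side the twist produces the orbifold count weighted by the gerbe. By Chebotarev, $p$-adic Hodge theory and the purity of the Hitchin fibration in the coprime case, these counts determine $E(M_{\SLn}^{L})$ and $E_{\mathrm{st},\alpha_N^{d'}}(M_{\PGLn}^{e})$ respectively.

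\textbf{First equality and elimination of $f_L$.} Assembling the identifications above, \autoref{mainthm} reduces to $E(M_{\SLn}^{L}) = E_{\mathrm{st},\alpha_N^{d'}}(M_{\PGLn}^{e}) \cdot f_L$. Since $E(M_{\SLn}^{L})$ depends only on $\deg L = d$ when $(d,n) = 1$, we may freely replace $L$ within its degree class; choosing $L$ so that $L \otimes \Dd \otimes N^{-d'} \in \Pp ic^0(\Cc)^{\times n}$ kills the $f_L$ factor and yields the first equality.

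\textbf{Refinement via Fourier on $\Gamma$.} The $\Gamma = \Pp ic^0(\Cc)[n]$-action on $M_{\SLn}^{L}$ by tensorisation induces the isotypic splitting $E(M_{\SLn}^{L}) = \sum_{\kappa \in \Gamma^\vee} E(M_{\SLn}^{L})_\kappa$; the Weil pairing identifies $\Gamma \xrightarrow{\sim} \Gamma^\vee$ via $\gamma \mapsto \kappa = \omega(\gamma)$. The Chen--Ruan decomposition of the twisted stringy $E$-polynomial reads
$$E_{\mathrm{st},\alpha_N^{d'}}(M_{\PGLn}^{e}) = \sum_{\gamma \in \Gamma} E\bigl(M_{\SLn}^{L_e,\gamma}\bigr)_{\kappa^{d'}},$$
where $\alpha_N^{d'}$ restricts on the $\gamma$-fixed sector to the character $\kappa^{d'}$ by \autoref{pglngerbe} and \autoref{rmk weil pairing}. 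Running a $\gamma$-equivariant version of the previous argument, or equivalently Fourier-transforming \autoref{mainthm} over $\Gamma$, matches components term by term and delivers the refined identity $E(M_{\SLn}^{L})_\kappa = E(M_{\SLn}^{L_e,\gamma})_{\kappa^{d'}}$.

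\textbf{Main obstacle.} The principal technical difficulty is the precise identification of the gerbe restrictions on each Chen--Ruan stratum: verifying that $\alpha_N^{d'}$ restricts to exactly $\omega(\gamma)^{d'}$ on the $\gamma$-fixed locus, and that the Fourier decomposition over $\Gamma$ commutes with the $p$-adic integration in a manner compatible with the twist. Both points rest on the explicit descriptions of the gerbes in \autoref{slngerbe} and \autoref{pglngerbe}, together with the Weil pairing formalism recorded in \autoref{rmk weil pairing}. A subsidiary subtlety is ensuring that the purity required for the $p$-adic count to determine the $E$-polynomial still holds on each $\gamma$-fixed component, which reduces to the smoothness of $M_{\SLn}^{L_e,\gamma}$ in the coprime regime.
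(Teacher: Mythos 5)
Your overall strategy matches the paper: spread out to an integral model, apply \autoref{mainthm}, reduce the gerbe function on the $\SLn$-side to the constant~$1$, translate through point counts and $p$-adic Hodge theory to $E$-polynomials, and then Fourier-transform over $\Gamma$ to obtain the refinement. The handling of $f_L$ differs but both are acceptable: the paper observes that over $\CC$ the line bundle $L\otimes\Dd\otimes N^{-d'}$ has an $n$th root and spreads this root out, so $f_L\equiv 1$ with no change of $L$; you instead invoke divisibility of $\Pp ic^0(\Cc)$ over $\CC$ to replace $L$ within its degree class. (Your sentence ``\autoref{mainthm} reduces to $E(M_{\SLn}^{L})=E_{\mathrm{st},\alpha_N^{d'}}(M_{\PGLn}^e)\cdot f_L$'' is not literally meaningful, since $f_L$ is a function inside the $p$-adic integral, not a scalar factor on an $E$-polynomial; what you want, and the paper does, is to arrange $f_L\equiv 1$ \emph{before} passing to $E$-polynomials.)

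The genuine gap is exactly the step you flag as your ``main obstacle'' and then do not carry out: identifying the character by which $\alpha_N^{d'}$ (more precisely, the $\Gamma$-equivariant trivial local system $L_{\alpha_N^{d'},\gamma}$) contributes on the $\gamma$-fixed sector. You assert, by appeal to \autoref{pglngerbe} and \autoref{rmk weil pairing} but without computation, that this character is $\kappa^{d'}$. The paper's own proof (Corollary~\ref{corTMScoprime}) invokes \cite[Theorem 7.24]{GWZ}, \cite[Appendix A]{LW} and \cite[Proposition 8.1]{HT}, which give the equivariant structure of $L_{\alpha_N,\gamma}$ as the character $\kappa^{-q}$ with $q\equiv d^{-1}\bmod n$, so that the $\gamma$-sector of $E_{\mathrm{st},\alpha_N^{d'}}(M_{\PGLn}^e)$ is the $\kappa^{-qd'}$-isotypic piece, not the $\kappa^{d'}$-piece; these agree only when $d\equiv -1\bmod n$. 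In other words, the answer you wrote down is wrong precisely at the step you acknowledge to be delicate, and it is not a matter of normalisation: the paper devotes Remark~\ref{inaccuracy} to the fact that getting this exponent wrong (when $d\not\equiv d'\bmod n$, i.e. when $\deg D$ is odd) was an inaccuracy in earlier literature. To finish the proof you must actually trace the $\Gamma$-equivariant structure through Lemma~\ref{equiv lemma} and the transgression to the inertia stack, and you will find the extra factor $-q$.
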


It extends \cite[Theorem 7.21]{GWZ} where the degree of $D$ is assumed to be even. It also allowed us to detect an inaccuracy in \cite[Theorem 3.2]{MSendoscopicHTcoprime} when $d' \not\equiv d \mod n$, see Remark \ref{inaccuracy}. 

In the non-coprime case, the moduli space of $\SLn$-Higgs bundles is singular, and there is not yet a general result relating $p$-adic integrals to cohomological invariants. We can nonetheless use the existing mirror symmetry results \cite[Theorem 0.2]{MSnoncoprime} to obtain the following cohomological interpretation of $p$-adic integrals. The right hand side is an alternated trace of Frobenius on $\ell$-adic cohomology, see Notation \ref{notation frob}.

\begin{proposition}[Proposition \ref{prop link IH}]
Let $S = \Spec \Oo_F$. Let $n$ be an odd prime and $d \in \ZZ$. Let ${L= N^{\otimes d} \in \Pp ic^d(\Cc)}$. Let $k_F \simeq \FF_q$ be the residue field of $\Oo_F$. Then, 
    $$ \int_{M_{\SLn}^L(\Oo_F)^\sharp} f_{\alpha} \ \mu_{\can} = q^{- \dim M_{\SLn}^L} \tr ( \Fr \ | \ IH_c(M_{\SLn, k_F}^L)).$$
\end{proposition}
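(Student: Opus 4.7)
The idea is to compose our main theorem (Theorem \ref{thm1}) with Maulik--Shen's topological mirror symmetry \cite[Theorem 0.2]{MSnoncoprime}, routing through a $\PGLn$ moduli space of coprime degree, where the $p$-adic volume can be read off directly from $\ell$-adic cohomology.

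First I would choose $e \in \ZZ$ such that $e' := e - \deg D \cdot n(n-1)/2 = 1$. Since $n$ is an odd prime, $n$ divides $n(n-1)/2$, hence $e \equiv 1 \pmod{n}$ and $\gcd(e,n) = 1$. Applying Theorem \ref{thm1} yields
\begin{equation*}
\int_{M_{\SLn}^L(\Oo_F)^\sharp} f_{\alpha}\,\mu_{\can,\SLn}
= \int_{M_{\PGLn}^e(\Oo_F)^\sharp} f_{\alpha_N}^{d'} \cdot f_L\,\mu_{\can,\PGLn}.
\end{equation*}
Since $L = N^{\otimes d}$ and $n \mid d - d'$, the class $[L \otimes \Dd \otimes N^{-d'}]$ in $\Pp ic^0/(\Pp ic^0)^{\times n}$ reduces to $[\Dd]$, giving a controlled form for $f_L$.

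Because $\gcd(e,n) = 1$, the moduli $M_{\PGLn}^e$ is a smooth Deligne--Mumford stack, and the gerbe-twisted $p$-adic integral on the right can be interpreted, as in \cite{GWZ}, as $q^{-\dim M_{\PGLn}^e}$ times the alternating trace of Frobenius on the gerbe-twisted $\ell$-adic cohomology of $M_{\PGLn, k_F}^e$ (essentially Grothendieck--Lefschetz for DM stacks). Smoothness implies that this cohomology coincides with the corresponding isotypic piece of $IH_c$. Maulik--Shen's theorem then identifies this twisted intersection cohomology with $IH_c(M_{\SLn, k_F}^L)$, and combined with $\dim M_{\SLn}^L = \dim M_{\PGLn}^e$ this yields the claimed identity.

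The main obstacle is the precise bookkeeping of twists: the factor $f_{\alpha_N}^{d'} \cdot f_L$ produced by Theorem \ref{thm1} must single out exactly the isotypic summand of $IH_c(M_{\PGLn, k_F}^e)$ that Maulik--Shen pair with $IH_c(M_{\SLn}^L)$. Concretely, one has to compare the character of $\Gamma = \Pp ic^0[n]$ encoded by our $p$-adic gerbe (via the Weil pairing) with the character labelling the summand in \cite{MSnoncoprime}. A secondary issue is that \cite{MSnoncoprime} gives an equality of intersection cohomologies over $\CC$, whereas here we need an equality of Frobenius traces over $k_F$; this can be bridged by a standard spreading-out argument, using purity of $IH_c$ and the Hodge--Tate character of the Frobenius eigenvalues.
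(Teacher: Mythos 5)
Your proposal identifies the right ingredients (Theorem \ref{thm1}, the orbifold formula of \cite{GWZ}, and Maulik--Shen's topological mirror symmetry), but there is a genuine gap in how you chain them together.

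The central problem is a degree mismatch. You correctly pick $e$ with $e' = 1$, hence $e \equiv 1 \pmod n$. But in the only case where the proposition says something new, $d \equiv 0 \pmod n$ (for $d \not\equiv 0$ the $\SLn$ moduli is smooth and the claim is the coprime case already proved in \cite{GWZ}). So you are feeding $d$ with $\gcd(n,d)=n$ and $e$ with $\gcd(n,e)=1$ into \cite[Theorem 0.2]{MSnoncoprime} — but that theorem \emph{requires} $(n,d) = (n,e)$, as the introduction of the paper itself points out. So the single sentence ``Maulik--Shen's theorem then identifies this twisted intersection cohomology with $IH_c(M_{\SLn, k_F}^L)$'' has no support; the theorem you invoke simply does not apply in the configuration you have set up. This is not a bookkeeping issue: the fixed loci entering $E_{\textnormal{st}, \alpha_N^{d'}}(M_{\PGLn}^1)$ are of the form $M_{\SLn}^{L_1,\gamma}$ for $L_1$ of degree $1$, whereas Maulik--Shen's isomorphism (\ref{MSeq}) relates $IH_c(M_{\SLn}^{0})_\kappa$ to the fixed loci $M_{\SLn}^{0,\gamma}$ at degree $0$. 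The paper bridges exactly this gap with two pieces of work your plan omits: (i) $\chi$-independence for $\GLn$-Higgs bundles \cite{MS23} to show $H_c(M_{\PGLn}^1) = H_c(M_{\PGLn}^0)$; and (ii) an explicit geometric analysis of the $\gamma$-fixed loci as norm fibres over cyclic Galois covers together with a regular-representation argument to show $H_c(M_{\SLn}^{L_1,\gamma})^\Gamma = H_c(M_{\SLn}^{0,\gamma})_\kappa$, a statement that genuinely depends on the fact that $\Gamma$ permutes components of $Nm^{-1}(L_1')$ cyclically but \emph{fixes} components of $Nm^{-1}(L_0')$ — the $G$-equivariant structures are different in the two degrees.

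A secondary issue: the assertion ``Smoothness implies that this cohomology coincides with the corresponding isotypic piece of $IH_c$'' conflates two different things. Smoothness of the DM stack $\Mm_{\PGLn}^e$ says that $M_{\PGLn}^e$ has quotient singularities so $IH_c = H_c$ for that one component of the inertia stack; it says nothing about the twisted sectors $M_{\SLn}^{L_1,\gamma}$ appearing in the orbifold formula. Converting those twisted sectors into the $\kappa$-isotypic summands of $IH_c(M_{\SLn}^0)$ is precisely the content of (\ref{MSeq}), applied $\kappa$ by $\kappa$, and you need it at degree $0$, not degree $1$. Finally, the spreading-out you mention needs to cover both the $\chi$-independence isomorphism and (\ref{MSeq}), not just the latter, and this is why the paper works over a carefully chosen dense open of the spreading-out base rather than over all good-reduction primes.
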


\begin{remark}
    If such correspondence between $p-$adic integrals and intersection cohomology was established for any $n$ and $D > K_\Cc$ then \autoref{thm1} would imply an identity of intersection $E$-polynomials :
    $$ IE(M^L_{\SLn, \CC}) = E_{\textnormal{st}, \alpha^{d'}}(M^1_{\PGLn, \CC}),$$
    for any $d \in \ZZ$ and $L \in \Pp ic^d(\Cc)$, with $d' = d -\deg D \cdot \frac{n(n-1)}{2}$.
\end{remark}

 Finally, the recent Hausel--Thaddeus-type conjecture \cite[Conjecture 10.3.25]{Davison} suggests an interpretation of these $p$-adic integrals in terms of $BPS$ cohomology. It is speculative, since the sheaf $\phi_{BPS}$ is not yet constructed as an $\ell$-adic sheaf. We continue to work over $S = \Spec \Oo_F$.
 
\begin{conjecture}[Conjecture \ref{conjecture}]  Let $D=K_\Cc$. Let $d \in \ZZ$ and $L \in \Pp ic^d(\Cc)$. Then, 
    $$ \int_{M_{\SLn}^{L}(\Oo_F)^\sharp} f_\alpha \ \mu_{\can} =  q^{- \dim M_{\SLn}^{L}} \tr( \Fr \ | \  H^* (\Ll M_{\SLn, k_F}^{L}, \phi_{BPS})_1). $$ 
\end{conjecture}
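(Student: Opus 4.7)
The plan is to mirror the intersection cohomology argument used in Proposition \ref{prop link IH} and transport it to the BPS setting once the sheaf $\phi_{BPS}$ has been constructed as an $\ell$-adic object. As a first step, I would use the standard dictionary between $p$-adic integrals and point counts on the special fiber: by the change of variables identifying $\mu_{\can}$ with a counting measure on $M_{\SLn}^L(k_F)$ (as in \cite{GWZ}), the integral $\int f_\alpha \, \mu_{\can}$ over $M_{\SLn}^L(\Oo_F)^\sharp$ reduces to a character sum over $k_F$-points weighted by the restriction of $f_\alpha$. Because $f_\alpha$ is built from the gerbe $\alpha$ on the stack of $\SLn$-Higgs bundles, this sum is naturally indexed by $\alpha$-twisted sectors and refines to a sum over conjugacy classes in the inertia stack of $\Mm_{\SLn}^L$, with the untwisted gerbe data selecting a specific component.

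The second step is to set up the corresponding decomposition on the right-hand side. For cohomological purposes the loop space $\Ll M_{\SLn, k_F}^{L}$ is governed by the inertia stack, which decomposes according to the eigenvalues of the loop action; the subscript $1$ picks out the component associated with the trivial eigenvalue. On generic fibers of the Hitchin map, where the moduli space is identified via BNR \cite{BNR89} with $\Pp ic$ of the spectral curve, $\phi_{BPS}$ should collapse to the constant sheaf and this component should match the gerbe-untwisted part of the character sum obtained in step one. Propagating this identification over the whole Hitchin base $\Aa$ would require a support theorem for BPS sheaves analogous to the one employed by Maulik--Shen for intersection cohomology in \cite{MSnoncoprime}, together with a compatibility between the gerbe $\alpha$ and the natural $\Gamma$-equivariant structure on $\phi_{BPS}$.

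The main obstacle is two-fold. First, $\phi_{BPS}$ is presently defined via vanishing cycles in the constructible setting over $\CC$; lifting it to an $\ell$-adic sheaf over $\Spec \Oo_F$ that is compatible with Frobenius is precisely the missing ingredient flagged in the discussion preceding the conjecture, and without it the right-hand side cannot even be made rigorous. Second, granting such a construction, one needs a BPS integrality statement strong enough to decompose $\phi_{BPS}$ along the Hitchin base and isolate the component corresponding to $\alpha$; this is the analogue of the endoscopic decompositions of \cite{MSendoscopicHTcoprime} and \cite{Davison}, and is where I expect the main work to lie. Once both ingredients are available, the argument should follow the template of Proposition \ref{prop link IH} essentially verbatim, combined with the non-archimedean topological mirror symmetry of Theorem \ref{thm1} to match twisted $p$-adic volumes with the appropriate component of BPS cohomology.
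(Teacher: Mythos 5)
The statement you are treating is explicitly a \emph{conjecture} in the paper, and the paper offers no proof of it. The text preceding Conjecture~\ref{conjecture} states outright that the link is speculative because $\phi_{BPS}$ has not yet been constructed as an $\ell$-adic sheaf; the only ``evidence'' the paper marshals is \emph{(i)} the formal resemblance between the $D=K_\Cc$ specialisation of Theorem~\ref{mainthm} and Davison's conjectural BPS mirror symmetry, \emph{(ii)} Proposition~\ref{prop link IH} in the $D>K_\Cc$ (smooth) regime, where intersection and BPS cohomology agree, and \emph{(iii)} references to the $\GLn$ comparison in \cite{COW} and \cite{GWZ24}. So there is no ``paper's own proof'' to compare against; you should be aware that you were not asked to reconstruct a proof that exists, and no attempt of yours can succeed as a complete argument at present.

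Reading your proposal in that light, you have the right instincts: you correctly refrain from asserting a proof, you correctly identify the central missing ingredient (an $\ell$-adic, Frobenius-compatible construction of $\phi_{BPS}$), and you correctly flag the need for a BPS support/integrality statement strong enough to propagate a Hitchin-fibrewise comparison over the whole base $\Aa$. These are precisely the two kinds of input that would be required to mimic Proposition~\ref{prop link IH}, which is the template the paper itself gestures at. Where your framing diverges from the paper is in emphasis: you privilege the point-counting and inertia-stack route (change of variables for $\mu_{\can}$, twisted-sector decomposition, and a support theorem for $\phi_{BPS}$), whereas the paper's motivation is purely formal, matching the shape of equation~(\ref{p-adic D=K}) against Davison's equation~(\ref{BPSconj}) and then falling back on the smooth case for corroboration. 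Your route is closer to a genuine proof strategy; the paper's route is closer to a consistency check. Both are legitimate for a conjecture, but neither closes the gap.

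One small caution: your identification of $\Ll M_{\SLn}^L$ with ``the inertia stack'' is a rough heuristic and should not be stated as if it were exact. In the formalism of \cite{Davison}, $\Ll\Mm$ parametrises objects together with an endomorphism (not merely scalar or finite-order automorphisms), and the subscript $1$ on $H^*_{BPS}(\Ll M_{\SLn}^L)_1$ denotes an isotypical component for the $Z(\SLn)=\mu_n$-action on BPS cohomology, not the ``untwisted sector'' of an orbifold inertia decomposition. Matching this decomposition against the gerbe-twisted character sum from the $p$-adic side is exactly the point at which an analogue of the endoscopic decomposition of \cite{MSendoscopicHTcoprime} and a $\Gamma$-equivariant structure on $\phi_{BPS}$ would be needed, as you say. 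So the identification of obstacles is sound; just be precise about which structure on $\Ll M$ the conjecture is referring to before trying to align it with twisted sectors.
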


\subsection{About the proof} The proof of our main theorem relies on a generalisation to arbitrary fields of \cite[Propositions 3.2 and 3.6]{HT}, called $SYZ$-type mirror symmetries. In \autoref{pryms}, we introduce the notation for abelian varieties $\Pp rym^{0}_{\PGLn}$ and $\Pp rym_{\SLn}^{0}$ acting on generic fibres of Hitchin fibrations. It is already established that they are dual abelian varieties. For the gerbes $\alpha$ and $\alpha_N$ we define a torsor of trivialisations $Triv^0$ in Definition \ref{triv0}.

As in \cite{HT}, we express the torsor structure of a Hitchin fibre in terms of the torsor of trivialisations of the gerbe on the dual fibre. We denote $\Aa$ the Hitchin basis and $\Aa^{\textnormal{sm}}$ its regular locus, see \autoref{pryms}. 

\begin{proposition}[$SYZ$-type symmetries, Propositions \ref{firstSYZ} and \ref{secondSYZ}]
Let $S = \Spec k$ for a field $k$ in which $n$ is invertible and $\mu_{n, k}$ is constant. Let $d, e \in \ZZ$ and $L \in \Pp ic^d_{\Cc/S}(S)$. For $a \in \Aa^{\textnormal{sm}}(k)$, consider $h^{-1}_{\PGLn} (a) \subseteq M^e_{\PGLn}$ and $h^{-1}_{\SLn} (a)  \subseteq M^L_{\SLn} $. 

\begin{enumerate}
    \item There is an isomorphism of $\Pp rym^{0}_{\PGLn}$-torsors 
$$ \Triv^0 (\alpha^{e'} \ | \ h^{-1}_{\SLn} (a))  \xlongrightarrow{\sim} h^{-1}_{\PGLn} (a).  $$ 
    
    \item Let $L_0 = L \otimes \Dd \otimes N^{-d'} \in \Pp ic^0_{\Cc/S}(S)$. There is an isomorphism of $\Pp rym_{\SLn}^{0}$-torsors 
$$ \Triv^0(\alpha_N^{d'} \ | \ h^{-1}_{\PGLn} (a)) \times^{\Pp rym_{\SLn}^{0}} Nm_{\tilde{\Cc}/\Cc}^{-1}(L_0) \xlongrightarrow{\sim} h^{-1}_{\SLn} (a),  $$ 
where $ \times^{\Pp rym_{\SLn}^{0}}$ denote the product of $\Pp rym_{\SLn}^{0}$-torsors. 
\end{enumerate} 
\end{proposition}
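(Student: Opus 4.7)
The plan is to exploit the BNR correspondence over the smooth Hitchin locus $\Aa^{\textnormal{sm}}$ to reduce both statements to an identification, via duality between $\Pp rym^0_{\SLn}$ and $\Pp rym^0_{\PGLn}$, of the restrictions of $\alpha$ and $\alpha_N$ to the generic Hitchin fibers with a universal Poincaré-type $\mu_n$-gerbe. This is the algebraic counterpart of the strategy of \cite[Propositions 3.2 and 3.6]{HT}, with transcendental inputs replaced by étale cohomology, the Kummer sequence, and the Weil pairing, all of which are available since $n \in k^\times$ and $\mu_{n,k}$ is constant.

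For $a \in \Aa^{\textnormal{sm}}(k)$ with smooth spectral cover $\pi \colon \tilde{\Cc} \to \Cc$, the BNR correspondence identifies $h_{\SLn}^{-1}(a)$ with $\Nm_{\tilde{\Cc}/\Cc}^{-1}(L) \subseteq \Pp ic^{d'}_{\tilde\Cc/S}$, a torsor under $\Pp rym^0_{\SLn}$, and $h_{\PGLn}^{-1}(a)$ with a $\Pp rym^0_{\PGLn}$-torsor obtained from the appropriate degree component of $\Pp ic_{\tilde\Cc/S}/\pi^*\Pp ic^0_{\Cc/S}$; these two abelian varieties are known to be dual. I would then unwind the constructions of $\alpha$ in \autoref{slngerbe} and of $\alpha_N$ in \autoref{pglngerbe}, compute their restrictions to these torsors, and identify the resulting $\mu_n$-gerbe classes in $H^2(-,\mu_n)$ with the Poincaré-type class on $\Pp rym^0_{\SLn}\times \Pp rym^0_{\PGLn}$, using the Weil pairing $\Pp rym^0_{\SLn}[n]\times \Pp rym^0_{\PGLn}[n]\to \mu_n$.

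Once this identification is in place, part (1) is formal: the torsor $\Triv^0$ of trivialisations of such a $\mu_n$-gerbe on a $\Pp rym^0_{\SLn}$-torsor is canonically a $\Pp rym^0_{\PGLn}$-torsor, and the identification matches it with $h_{\PGLn}^{-1}(a)$. For part (2), running the symmetric argument yields a $\Pp rym^0_{\SLn}$-torsor canonically isomorphic not to $\Nm_{\tilde\Cc/\Cc}^{-1}(L)$ itself but to $\Nm_{\tilde\Cc/\Cc}^{-1}(M_0)$ for a \emph{default} element $M_0 \in \Pp ic^d_{\Cc/S}(S)$ determined by the reference line bundles $N$ and $\Dd$. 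The twist $L_0 = L \otimes \Dd \otimes N^{-d'}$ records exactly the difference between $M_0$ and $L$, and the contracted product $\times^{\Pp rym^0_{\SLn}} \Nm_{\tilde\Cc/\Cc}^{-1}(L_0)$ implements the translation on $\Pp ic^{d'}_{\tilde\Cc/S}$ carrying the default torsor to $h_{\SLn}^{-1}(a)$.

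The main obstacle is the explicit class computation at the identification step: one must trace the gerbe constructions of \autoref{slngerbe} and \autoref{pglngerbe} through the BNR correspondence and show that the restricted classes coincide with the Poincaré class pulled back via the duality, with the correct exponents $e'$ and $d'$ emerging from the interplay of the norm map with the degree shifts. In \cite{HT} this step is carried out transcendentally via the Abel--Jacobi map and Hodge-theoretic arguments, which are not available over a general base. The algebraic replacement hinges on the constancy of $\mu_{n,k}$, the Kummer sequence, and the standard algebraic description of $\mu_n$-gerbes on abelian varieties in terms of symmetric biextensions with the dual.
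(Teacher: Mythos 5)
Your proposal correctly identifies the broad ingredients: the BNR correspondence on $\Aa^{\textnormal{sm}}$, the duality between $\Pp rym^0_{\SLn}$ and $\Pp rym^0_{\PGLn}$, the Poincaré bundle as the source of trivialisations, and the reduction to torsor structures over $S$. It also correctly reads the role of $L_0 = L\otimes\Dd\otimes N^{-d'}$ as recording a discrepancy of normalisations. However, the proposal is a strategy outline rather than a proof, and it defers precisely the content that the paper actually has to supply.

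Concretely, you write that the ``main obstacle'' is to identify the restricted gerbe classes with a Poincaré-type class in $H^2(-,\mu_n)$ via symmetric biextensions/Weil pairings, and leave this step undone. The paper does not perform a class computation in $H^2$; instead, it works directly with moduli descriptions of trivialisations. For part (1), a trivialisation of $\alpha$ over $\Pp rym^{L'} \hookrightarrow \Mm^{L,\rig}_{\SLn}$ is unpacked as a universal line bundle $\Ll$ on $\Pp rym^{L'}\times\tilde\Cc$ with $\det\pi_*\Ll\simeq pr_2^*L$, and the torsor $\Triv^0$ is exhibited as the $\Pp ic^0_{\Cc/S}$-quotient of a $\Pp ic^0_{\tilde\Cc/S}$-torsor $\Tt'$; the isomorphism $\Pp ic^1_{\tilde\Cc/S}\xrightarrow{\sim}\Tt'$ is constructed explicitly by pulling back $\Pp_{\tilde\Cc}$ along a map $h_{L_1}$. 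You would need to either actually carry out your cohomological identification or adopt some such direct construction.

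More seriously, the claim that part (2) follows from ``running the symmetric argument'' is misleading. The gerbe $\alpha_N$ on $\Mm^e_{\PGLn}$ is not defined symmetrically to $\alpha$: it is a $\Gamma$-equivariant gerbe, and its construction depends on the trivialised group scheme $\Gamma_N\to\Cc$ and the choice of $N$. The paper first needs Lemma \ref{equiv lemma}, which characterises trivialisations of $\alpha_N$ over a $\Gamma$-scheme as $\Gamma_N$-equivariant lifts with $\GG_m$ acting with weight $1$ --- this is a separate, nontrivial step with no counterpart on the $\SLn$ side. The proof of Proposition \ref{secondSYZ} then constructs the map $\Pp rym^N\to\Triv^0(\alpha_N\mid\Pp rym^{e'}_{\PGLn})$ by lifting a pullback of $\Pp_{\tilde\Cc}$ to a $\Gamma$-equivariant bundle on $\Pp rym^{L_e'}\times\Pp ic^0_{\Cc/S}\times\Pp ic^1_{\tilde\Cc/S}$, factoring it as a universal bundle tensor a Poincaré-type correction, and then checking that the correction is normalised at $N$ so that the $\GG_m$-weight is correct. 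None of this is obtained by transposing the argument for part (1). Your proposal as written would not produce this and does not flag the asymmetry, so this is a genuine gap.

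Finally, you do not mention the reduction to the case of exponent $1$ (using that $\Triv^0(-\mid T)$ is a group homomorphism in the gerbe class), which is how the paper handles the exponents $e'$ and $d'$; your plan to have the exponents ``emerge from the interplay of the norm map with the degree shifts'' is vaguer than what is needed and would still have to be made precise.
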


\subsection{Organisation of the article.} In \autoref{prelim} we explain the construction of the gerbes and some background for the proofs of $SYZ$-type dualities. 

The moduli stack of $\SLn$-Higgs bundles can be seen as a $\mu_n$-gerbe $\alpha_n$ over its rigidification. However, the $SYZ$ duality is better described in terms of the induced $n$-torsion $\GG_m$-gerbe $\alpha$, which is also naturally a moduli stack of Higgs bundles.

For the $\PGLn$-side, the moduli stack is constructed as a quotient by the finite group $\Gamma = \Pp ic^0_{\Cc/S}[n] \simeq (\ZZ/n\ZZ)^{2g}$ of the $\SLn$ stack. The action of $\Pp ic^0_{\Cc/S}$ by tensorisation on $\GLn$-bundles restricts to an action of $\Gamma$ on $\SLn$-bundles. Often one defines the $\Gamma$-action directly on the rigidified stack, where scalar automorphisms do not appear. However, rigidifying line bundles in $\Gamma$, we get a $\Gamma$-action at the level of the non-rigidified stack. It gives the gerbe $\alpha$ a structure of $\Gamma$-equivariant $\GG_m$-gerbe, hence makes it a gerbe over the $\PGLn$ moduli space. This construction requires the choice of a degree $1$ line bundle $N$ over $\Cc$, which replaces the base point $c \in \Cc$ in \cite{HT}. We explain in Appendix \ref{appendix} the precise relation to their setup. To characterize the trivialisations of the $\Gamma$-equivariant gerbe, we study a relative group scheme $\Gamma_N \longrightarrow \Cc$, which acts on Higgs bundles corresponding to $\Gamma$-equivariant sections of the gerbe $\alpha$.

In \autoref{section SYZ}, we generalise the $SYZ$-type duality of \cite{HT} to arbitrary fields and arbitrary determinant line bundle $L$. 

The main theorem is proven in \autoref{sectionnaTMS}. The proof uses the $p$-adic measure constructed in \cite{COW} and the arguments of \cite{GWZ}. Namely, we interpret gerbe functions in terms of Tate duality pairings when integrating against abelian varieties. 

Finally, \autoref{section corollaries} is devoted to completing the proof of topological mirror symmetry in the coprime case with odd degree $D$ and to the interpretation of $p$-adic integrals in terms of intersection and $BPS$ cohomologies.

\subsection{Acknowledgements.}
I would like to thank Dimitri Wyss for suggesting this project and for his generous advice throughout. I am grateful to Junliang Shen and Mirko Mauri for valuable correspondences. Discussions with Eric Yen-Yo Chen, Archi Kaushik, Sebastian Schlegel Mejia and Tanguy Vernet also played an important role. 

Financial support was provided by grant \# 218340 from the Swiss National
Science Foundation (FNS/SNF). 




\section{Preliminaries} \label{prelim}

In this article we use $S$ to denote a base scheme. It will always be $\Spec k$ for a field $k$, $\Spec \Oo_F$ for a local ring with finite residue field $k$ or $\Spec R$ for a finitely generated $\ZZ$-algebra $R \subset \CC$. 

We fix an arbitrary rank $n \in \NN$. We always assume that $\charac \ k$ does not divide $n$ and that $\mu_{n, S}$ is a constant group scheme. 


Let $\Cc \longrightarrow S$ be a relative smooth proper curve over $S$ of genus $g \geq 2$. We further assume that the group scheme $\Gamma = \Pp ic^0_{\Cc/S} [n]$  is constant over $S$.

\subsection{Notations for Picard varieties} \label{subsection notations picard}

In this article, we consider smooth integral curves $\Tilde{\Cc} \longrightarrow S$ over $S = \Spec \Oo_F$ and $S = \Spec k$. In all cases, $\Tilde{\Cc}  \longrightarrow S$ the relative Picard functor $Pic_{\Tilde{\Cc}/S}$ is represented by a $S$-scheme $\Pp ic_{\Tilde{\Cc}/S} $ \cite[séminaire n°232, Théorème 3.1]{grotFGA}. Moreover, by flatness and irreducibility, the degree of line bundles is well-defined.

Since $\Pp ic_{\Tilde{\Cc}/S}$ represents the relative Picard functor, there is a unique Poincaré invertible sheaf normalized at $(0,0)$  \cite[Ex. 9.4.3]{FGAexplained}. We denote it 
$$ \Pp_{\Tilde{\Cc}} \longrightarrow \Pp ic^0_{\Tilde{\Cc}/S} \times \Pp ic^0(\Pp ic^0_{\Tilde{\Cc}/S} ).$$
 

There is canonical self-duality isomorphism 
\begin{align*}
     \Pp ic^0_{\Tilde{\Cc}/S}  &\xlongrightarrow{\sim} \Pp ic^0(\Pp ic^0_{\Tilde{\Cc}/S}) \\ 
     \Ll &\longmapsto \Oo_{\Pp ic^0_{\Tilde{\Cc}/S}}([\Ll] - [0 ]).
\end{align*}

For the fixed curve $\Cc$, we denote $\Gamma = \Pp ic^0(\Cc) [n]$. Using self-duality, we can consider $\Gamma \subset \Pp ic^0(\Pp ic^0_{\Cc/S})$. 

\begin{notation} \label{notation LL et L}
    For $\gamma \in \Gamma$, we denote $\LL_\gamma$ (resp. $L_\gamma$) the line bundle over $\Pp ic^0_{\Cc/S}$ (resp. $\Cc$) associated to $\gamma$. 
\end{notation}

We pick a trivialising section $s$ of $$[0]_{\Pp ic^0(\Pp ic^0_{\Cc/S})}^*\Pp_\Cc \longrightarrow \Pp ic^0_{\Cc/S}.$$ 

For $\gamma \in \Gamma$, $s$ provides a non-zero rational point $P_\gamma$ in the fibre over $[0]$ of $\LL_\gamma$. 

Since $\Cc$ embeds canonically in $\Pp ic^1_{\Cc/S}$ via the Abel-Jacobi map (denoted AJ), any $N \in \Pp ic^1_{\Cc/S}(S)$ allows us to identify $L_\gamma$ with the pullback of $\LL_\gamma$ along $\Cc \xrightarrow{AJ} \Pp ic^1_{\Cc/S} \xrightarrow{\cdot N^{-1}} \Pp ic^0_{\Cc/S}.$ 

\begin{notation} \label{gN}
    For $N \in \Pp ic^1_{\Cc/S}(S)$, we use the notation $g_N$ for the isomorphism :
\begin{align*} 
    g_{N} \ : \  \Pp ic^1_{\Cc/S}  &\xlongrightarrow{\sim} \Pp ic^0_{\Cc/S}  \\
    L &\longmapsto L \otimes  N^{-1}.
\end{align*}
\end{notation}

Let $S = \Spec k$. In general, for a line bundle over an abelian variety $\LL \longrightarrow A$, there is an associated group scheme $\Gg(\LL)$ called the theta group \cite[§8]{Moonen}, \cite[§5.11]{bookcontainingMilneabelianvarieties}.  For a $S$-scheme $T$, a $T$-section of $\Gg(\LL)$ is given by a pair $(x,\phi)$ with $x \in A(T)$ and $\phi \ : \ t_x^* \LL_T \xlongrightarrow{\sim} \LL_T$ where $t_x$ denotes the translation by $x$. Let $\LL^*$ denote the $\GG_m$-torsor associated to $\LL$. If $\LL \in \Pp ic^0(A)$, the choice of a rational point $P \neq 0$ in the fibre over $[0]$ of $\LL $ provides an isomorphism of schemes $\Gg(\LL) \xlongrightarrow{\sim} \LL^*$, given by $(x, \phi) \longmapsto \phi(P)$. 

Let $\Pp_\Cc \setminus 0$ denote the Poincaré bundle without its zero section. Using the second projection, there is a map $\Pp \setminus 0 \longrightarrow \Pp ic^0(\Pp ic^0_{\Cc/S})$. The fibre of $x \in \Pp ic^0(\Pp ic^0_{\Cc/S})$ is a $\GG_m$-torsor $\LL_x^*$ over $\Pp ic^0_{\Cc/S}$, isomorphic to $\Gg(\LL_x)$ through the trivialising section $s$.  
Hence, 
$$\Pp_\Cc \setminus 0 \longrightarrow \Pp ic^0(\Pp ic^0_{\Cc/S})$$
is a relative group scheme.

\subsection{Gerbes} \label{sub gerbe}

First, we recall the definition of a gerbe over a stack and the rigidification construction in \cite{ACV}. In this article we are only concerned with $\mu_n$ and $\GG_m$-gerbes. 
\begin{definition}[$G$-gerbe]
    Let $\Mm$ be an algebraic stack and $G$ be a smooth commutative algebraic group over $S$. An étale $G$-gerbe over $\Mm$ is an étale $BG$-torsor over $\Mm$. Equivalently, it is a morphism of stack $\Xx \longrightarrow \Mm$ over $S$ such that
    \begin{itemize}[-]
        \item  For all $U \longrightarrow \Mm$ étale, there exists an étale covering $U' \longrightarrow U$ such that $\Xx_{U'} \neq \emptyset$. 
        \item For all $U \longrightarrow \Mm$ étale, for all objects $s$, $s'$ in $\Xx_U$, there exists an étale covering $V \longrightarrow U$ such that $s_{|V} \simeq s'_{|V}$. 
        \item There is an isomorphism $G_\Mm \xlongrightarrow{\sim} I(\Xx/\Mm)$ where $I(\Xx/\Mm)$ is the inertia stack of $\Xx$ over $\Mm$. 
        \end{itemize}
\end{definition}

\textbf{Rigidifications.} Let $H$ be a smooth commutative algebraic group and $\Mm$ an algebraic stack over $S$. Suppose that for any $T \longrightarrow S$ and $x \in \Mm(T)$, there is an embedding $H_T \hookrightarrow ZAut_T(x)$ compatible with base changes. Then, there exists a rigidified stack $\Mm^{\rig}$ and a $H$-gerbe $\Mm \longrightarrow \Mm^{\rig}$ such that $H_T \subset Aut_T(x)$ is in the kernel of the morphism, and $\Mm^{\rig}$ has the same coarse moduli space as $\Mm$ \cite{ACV}. Moreover, if there is a group $\Gamma$ acting on the stack $\Mm$, the rigidification commutes with taking the quotient \cite[Theorem 5.1]{Romagny}. 

\subsubsection{Torsor of trivialisations} \label{sub torsor of trivial}
 Let $S = \Spec \ k$, such that $n$ is not divisible by the residue field characteristic and $\mu_{n,k}$ is constant. A gerbe is étale locally trivial, and the set of trivialisations has a natural torsor structure.
 
\begin{definition}
    For a $G$-gerbe $\alpha$ over $\Mm$ its torsor of trivialisations $Triv(\alpha \ | \ -)$ is the following sheaf over $\Mm$. For $T \longrightarrow  \Mm$, 
$$Triv(\alpha \ | \ T) = Isom(T \times_{\Mm} \alpha, T \times_S BG). $$
\end{definition}

The work of Giraud gives (equivalence classes of) gerbes an étale $2$-cocycle description \cite{Giraud}. For us, $G$ is a smooth abelian group so that étale and \textit{fppf} topologies coincide \cite[Exposé 6, Théorème 11.7]{Giraud1968DixES}. It implies that $Triv(\alpha \ | \ T)$ is a quasi-torsor under the group $H^1_{\textnormal{ét}}(T, G)$. This construction provides a map 
$$ H^2(T, G) \xlongrightarrow{Triv(- \ | \ T)} H^1(S, H^1(T, G))$$
which is a group morphism. In particular, for $G = \GG_m$ (resp. $G = \mu_n$), $Triv(\alpha \ | \ T)$ is a torsor under $H^1(T, \GG_m) \simeq \Pp ic(T)$ (resp. $H^1(T, \mu_n)$), see \cite{hitchingerbe}. 

The Kummer sequence
$$ 1 \longrightarrow \mu_n \longrightarrow \GG_m \xlongrightarrow{\lambda \mapsto \lambda^n} \GG_m \longrightarrow 1 $$ 
provides a long exact sequence for any $S$-scheme $T$. There are maps : 
\begin{align}
     H^1_{\textnormal{ét}}(T, \mu_n) &\longrightarrow H^1_{\textnormal{ét}}(T, \GG_m) \label{ntorslinebundles} \\
     H^2_{\textnormal{ét}}(T, \mu_n) &\longrightarrow H^2_{\textnormal{ét}}(T, \GG_m). \label{ntorsgerbes}
\end{align} 


\begin{definition}[$Triv^0$] \label{triv0}
    Let $\alpha$ be a $\GG_m$-gerbe of $n$-torsion over $\Mm$, i.e. via (\ref{ntorsgerbes}) such that $\alpha = \alpha_n \times_{B\mu_n} B\GG_m$ where $\alpha_n$ is a $\mu_n$-gerbe. For an irreducible scheme $T \longrightarrow \Mm$, we define :
    $$ Triv^0(\alpha \ \mid \ T) = ( Triv(\alpha_n \ \mid \ T) \times \Pp ic^0(T))/H^1_{\textnormal{ét}}(T, \mu_n)  $$
    where $H^1_{\textnormal{ét}}(T, \mu_n)$ acts diagonally. 
\end{definition}

    The scheme $Triv^0(\alpha \ \mid \ T)$ is a torsor under $\Pp ic^0 (T)$ so it induces an element of $H^1(S, \Pp ic^0(T))$. Equivalently, assuming $T$ complete and normal, the short exact sequence of abelian group schemes \cite[Proposition 4.11]{Milneetale}
$$ 1 \longrightarrow k^*/k^{*n} \longrightarrow H^1_{\textnormal{ét}}(T, \mu_n) \longrightarrow \Pp ic^0(T)[n] \longrightarrow 1$$
induces a long exact sequence, hence a map
  \begin{align} \label{map triv0}
      H^1(S, H^1_{\textnormal{ét}}(T, \mu_n)) \longrightarrow H^1(S, \Pp ic^0(T)), 
  \end{align}  
where we post-composed with the map $H^1(S, \Pp ic^0(T)[n]) \longrightarrow H^1(S, \Pp ic^0(T))$ induced by the isogeny $[n]$ on $\Pp ic^0(T)$. 

The class of $Triv^0(\alpha \ \mid \ T)$ is the image of the class of $Triv(\alpha_n \ \mid \ T)$ in $H^1(F, H^1_{\textnormal{ét}}(T, \mu_n))$ by (\ref{map triv0}). We constructed a group morphism $Triv^0( - \ | \ T)$ in the following commuting square :  
$$\begin{tikzcd}
    H^2(T, \mu_n) \arrow[r, "(\ref{ntorsgerbes})"] \arrow[d, swap, "Triv( - \ | \ T)"]& H^2(T, \GG_m)[n] \arrow[d, "Triv^0( - \ | \ T)"] \\
    H^1(S, H^1_{\textnormal{ét}}(T, \mu_n)) \arrow[r, "(\ref{map triv0})"] & H^1(S, \Pp ic^0(T))  .
\end{tikzcd}$$

\begin{remark}
   The torsor $Triv^0$ is called $Triv^{U(1)}$ in \cite{HT} and $Split'$ in \cite{GWZ}. 
\end{remark}

\subsubsection{Gerbes functions} \label{gerbefunction}

Let $F$ be a $p$-adic field and $S = \Spec F$. We denote $\Oo_F$ its ring of integers. A gerbe $\alpha$ on a stack $\Mm$ gives rise to a $\CC$-valued function \cite[§5]{GWZ} on $\Mm(F)$ 
$$f_{\alpha} \ : \  \Mm(F) \longrightarrow \CC $$
$$ x \longmapsto exp(2\pi i \cdot \operatorname{inv}(x^* \alpha)).$$

For a $F$-point $x$ in $\Mm$, the pullback of the gerbe $x^* \alpha$ represents an element of the Brauer group $H^2(F, \GG_m)$. Then $$inv \ : \ H^2(F, \GG_m) \xlongrightarrow{\sim} \QQ/\ZZ$$ is the Hasse invariant. The construction is explained, for example, in \cite[§31]{Lorenz}. Below we explain the concrete situation in which we use the gerbe functions. 

\begin{Construction}[Gerbe function on the coarse moduli space.] \label{gerbe function construction}
    Let $\Mm \xlongrightarrow{\pi} M$ be a good moduli space map such that there is an open substack $U \rightarrow \Mm$ such that $\pi_{|U}$ is an isomorphism. Suppose that $x \in M(\Oo_F)^\sharp = M(\Oo_F) \cap U(F) $. Let $\alpha$ be a $\GG_m$-gerbe on $\Mm$. We define $$f_\alpha(x) = f_\alpha(\tilde{x})$$
    where $\tilde{x}$ is the lift of $x$ to $\Mm(F)$ via $\pi$. 
\end{Construction}

Let $T \rightarrow S$ be a torsor under an abelian variety $A \rightarrow S$. Then, the gerbe function can be interpreted in terms of the Tate duality pairing. As before, suppose $\alpha$ is a gerbe on $\Mm$ and $T \longrightarrow \Mm$. The torsor of trivialisation $Triv^0(\alpha \ \mid \ T)$ represents a unique element of $H^1(F, \hat{A})$ where $\hat{A} = \Pp ic^0(A)$ is the dual abelian variety of $A$. Using the Tate pairing \cite[Corollary I.3.4]{milnearithmdual}
$$ \langle -,- \rangle_A \ : \  A(F) \times H^1(F, \hat{A}) \longrightarrow \QQ/\ZZ$$
the Hasse invariant of $y^* \alpha$ at $y \in T(F)$ can be expressed as 
\begin{align*}
   \operatorname{inv}(y^* \alpha) = c \cdot \langle h_x(y), Triv^0(\alpha \ \mid \ A)\rangle_A 
\end{align*} 
where we picked $x \in T(F)$, inducing a trivialisation $T \xlongrightarrow{h_x} A $. Here, $c$ denote a $\mathit{n}$th root of unity which depends on this choice of trivialisation \cite[Lemma 6.7]{GWZ}.

\subsection{Moduli stacks of \texorpdfstring{$\GLn$}{GL(n)} and \texorpdfstring{$\SLn$}{SL(n)}-Higgs bundles} \label{slngerbe}

Recall that $\Cc$ is a relative curve over the base scheme $S$. Let $D$ be a relative effective divisor of $\Cc$, flat over $S$ such that $\deg D > 2g-2$ or $D = K_\Cc$.

\begin{notation}
 For $T \longrightarrow S$, we use $\Cc_T = T \times_S \Cc$. The projection maps are denoted $p \ : \ \Cc_T  \longrightarrow T$, $q \ : \ \Cc_T  \longrightarrow \Cc$.
\end{notation}

\begin{notation}
    We use $d$ and $e$ to denote the degree of bundles. In general, we use $d$ for the $\SLn$ side and $e$ for the $\PGLn$ side. Over non-algebraically closed fields, the moduli space of (twisted) $\SLn$-Higgs bundles depends on the choice of a line bundle of degree $d$, which we denote $L$. 
\end{notation}

\subsubsection{The moduli stack of $\GLn$-Higgs bundles} Let $d \in \ZZ$. The underlying category fibred in groupoids of the stack $\Mm_{\GLn}^d$ of semi-stable degree $d$ Higgs bundles over $\Cc$ can be described as follows. Details on the construction of the moduli stack are given in \cite[Remark 7.2]{GWZ} and \cite[Section 7]{Casalaina}. Let $T \longrightarrow S$. Then, 
\begin{align*}
    \Ob(\Mm_{\GLn}^d(T)) = \Bigg\{ (E, \phi) \ &| \ E \in \Ob(\Vv ec_n(\Cc)(T)), \ \phi \in \Hom_{\Oo_{C_T}}(E, E \otimes q^*D), \\
    & \ \text{semi-stable}, \ \det(E) \in \Pp ic_{\Cc/S}^d(T) \Bigg\} 
\end{align*} 
where $\Vv ec_n(\Cc)$ denote the stack of rank $n$ vector bundles on $\Cc$. Morphisms in the groupoids are given by isomorphisms of vector bundles commuting with the Higgs field $\phi$. For a map $T' \longrightarrow T$, there is a pullback functor $\Mm_{\GLn}^d(T) \longrightarrow \Mm_{\GLn}^d(T')$. 

\subsubsection{Moduli stacks of $\SLn$-Higgs bundles}

 Taking the determinant of the bundle and trace of the Higgs field gives an albanese map : 
\[ \Mm_{\GLn}^d \xlongrightarrow{alb} \Pp ic_{C/S}^d \times H^0(\Cc, D). \]

Its fibres are stacks of twisted $\SLn$-Higgs bundles. Precisely, for $L \in \Pp ic_{C/S}^d(S)$ we denote ${\Mm_{\SLn}^L = alb^{-1}(L, 0)}$. Then, for $T \longrightarrow S$,
\begin{align*}
    \Ob(\Mm_{\SLn}^L(T)) = \Bigg\{ (E, \phi) \ &| \ E \in \Vv ec_n(C)(T), \  \phi \in H^0(\Cc, \mathfrak{sl}_n(E)\otimes D), \\
    & \ \text{semi-stable}, \ \det(E) \simeq q^*L \in \Pp ic_{C \times_S T/S}^d  (S) \Bigg\} .
\end{align*} 

\textbf{A $\mu_n$-gerbe.} Each object $(E, \phi)$ over $T$ has an automorphism group $\Aut_{\SLn} (E, \phi)$. The scalar action gives an embedding $\mu_n(T) \hookrightarrow Z(\Aut_{\SLn} (E, \phi))$. The rigidification \cite{ACV} with respect to $\mu_n$ is an algebraic stack $\Mm_{\SLn}^{L, \rig}$ endowed with a $\mu_n$-gerbe : 
$$ \alpha_n \ : \  \Mm_{\SLn}^L \longrightarrow \Mm_{\SLn}^{L, \rig}. $$
When $n$ and $d$ are coprime, the rigidified stack is also the coarse moduli space, which is a smooth quasi-projective variety. 

\textbf{A $\GG_m$-gerbe.} We can also consider the following substack $\Mm_{\widetilde{\SLn}}^L$ of $\Mm_{\GLn}^d$ as a full subcategory given by :  
\begin{align*}
\Ob(\Mm_{\widetilde{\SLn}}^L(T)) = \Big\{ (E, \phi) \in \Ob(\Mm_{\GLn}^d(T)) \ & \big| \  \forall t \in |T|, \ \det(t^*E) \simeq L \in \Pp ic_{C/S}(S), \ \tr \phi = 0 \Big\} 
\end{align*} 
These Higgs bundles are objects of $\Mm_{\SLn}^L(T)$ with a twist by a line bundle over $T$.  Equivalently, 
$$ \Mm_{\widetilde{\SLn}}^L = \Mm_{\SLn}^L \times_{B\mu_n} B\GG_m.$$
For $(E, \phi) \in \Mm_{\widetilde{\SLn}}^L(T)$, let $\Aut'(E, \phi)$ denote its group of automorphisms in $\Mm_{\widetilde{\SLn}}^L(T)$. Its center contains scaling actions, hence a copy of $\GG_{m, T}$. For $T = \Spec k$, 
$$Aut'(E, \phi) = \GG_{m, k} \times^{\mu_{n, k}} \Aut_{\SLn} (E, \phi).$$
 
 Let $\Mm_{\widetilde{\SLn}}^{L, \rig}$ denote the $\GG_m$-rigidification. In $\Mm_{\widetilde{\SLn}}^{L, \rig}$, the twist by a line bundle on $T$ is the identity isomorphism. Hence, 
 $$ \Mm_{\widetilde{\SLn}}^{L, \rig} \simeq \Mm_{\SLn}^{L, \rig}. $$

 Let $\alpha$ denote the $\GG_m$-gerbe \begin{align} \label{slngerbe eq}
     \Mm_{\widetilde{\SLn}}^L \xlongrightarrow{\alpha} \Mm_{\SLn}^{L, \rig}.
 \end{align}
 It is induced by the $\mu_n$-gerbe $\alpha_n$ via (\ref{ntorsgerbes}). Let $M_{\SLn}^{L}$ denote the good moduli space of $\Mm_{\SLn}^{L, \rig}$. We constructed the following commuting diagram : 

$$\begin{tikzcd}
   \Mm_{\widetilde{\SLn}}^L  \arrow[r, "\alpha"] &    \Mm_{\SLn}^{L, \rig} \arrow[r] &  M_{\SLn}^{L} \\
    \Mm_{\SLn}^{L}   \arrow[ru, swap, "\alpha_n"]   \arrow[u] & &  
\end{tikzcd}$$

\subsection{Stacks of \texorpdfstring{$\PGLn$}{PGL(n)}-Higgs bundles} \label{pglngerbe}

\subsubsection{The \texorpdfstring{$\Gamma$}{Gamma}-action.} \label{gamma action}

We follow Romagny's definition of a group action on a stack \cite{Romagny}. 

\begin{definition}[Group action on a stack \cite{Romagny}] \label{Romagnydef}
    Let $G$ be a group scheme over $S$, with multiplication $m$ and neutral element map $e$. A group action on a stack $\Mm$ is a morphism of stacks 
    $$ \mu \ : \ G \times \Mm \longrightarrow \Mm $$
    such that
    
    $$ \begin{tikzcd}[row sep=large, column sep=large]
       G \times G \times \Mm \arrow[r, "m \times \id_\Mm"] \arrow[d, "\id_G \times \mu"] & G \times \Mm \arrow[d, "\mu"] \\
        G \times \Mm \arrow[ru, Rightarrow, "\alpha"] \arrow[r, "\mu"] & \Mm  
    \end{tikzcd}
    \quad 
    \quad
    \begin{tikzcd}[row sep= small, column sep=4em]
        G \times \Mm \arrow[rr, "\mu"] 
        \arrow[rd, Rightarrow, "\beta"] & & \Mm \\
        & \phantom{\Mm} & \\
         \Mm \arrow[uu, "e \times \id_\Mm "]  \arrow[from=3-1, to=1-3, swap, "\id_\Mm"]& &
    \end{tikzcd}$$

    is $1$-commutative, meaning that $\alpha$ and $\beta$ are the identity $2$-morphisms. 
    
\end{definition}

Fix $N\in \Pp ic^1_{\Cc/S}(S)$. Recall that $\Gamma = \Pp ic^0(\Cc)[n] \simeq (\ZZ/n\ZZ)^{2g}$. For all $\gamma \in \Gamma$, we can use the normalisations of the Poincaré bundle to equip $g_N^* \LL_\gamma$ (see Notations \ref{notation LL et L} and \ref{gN}) with a non-zero vector $v_\gamma$ in the fibre of $N$. We can then set $\xi^N_{\id}$ to be a trivialising section of $\Oo_{\Pp ic^1_{\Cc/S}}$ containing $v_{\id}$ and for $\gamma, \gamma' \in \Gamma$, 
$$ \xi^N_{\gamma, \gamma'} \ : \ g_N^*\LL_\gamma \otimes g_N^*\LL_{\gamma'} \xlongrightarrow{\sim} g_N^*\LL_{\gamma \gamma'} $$ sending $(v_\gamma \otimes v_{\gamma'})$ to $v_{\gamma\gamma'}$. 

Pulling back $\xi^N_{\gamma, \gamma'}$, $\xi^N_{\id}$ along the Abel-Jacobi map we obtain a trivialising section $\psi^N_{\id}$ of $ \Oo_\Cc$
and
$$ \psi^N_{\gamma, \gamma'} \ : \ L_\gamma \otimes L_{\gamma'} \xlongrightarrow{\sim} L_{\gamma \gamma'}. $$

\begin{lemma}[$\Gamma$-action on $\Mm_{\widetilde{\SLn}}^{L_e}$]
    Let $e \in \ZZ$, $L_e \in \Pp ic^e_{\Cc/S}(S)$ and recall the stack $\Mm_{\widetilde{\SLn}}^{L_e}$ from \autoref{slngerbe}. Let $T, T'$ be $S$-schemes. 
\begin{itemize}[-]
    \item Let $F_\gamma^T \ : \ \Mm_{\widetilde{\SLn}}^{L_e}(T) \longrightarrow \Mm_{\widetilde{\SLn}}^{L_e}(T)$ be the functor defined by :
    
    for $(E, \phi)$, $(E', \phi')$ objects in $\Mm_{\widetilde{\SLn}}^{L_e}(T)$,
    $$F_\gamma^T((E, \phi)) = (q^*L_\gamma \otimes E, \phi \otimes \id),$$
    
     and for $f \in Hom_{\Mm_{\widetilde{\SLn}}^{L_e}(T)}((E, \phi),(E', \phi'))$, $F_\gamma (f) = \id_{q^*L_\gamma} \otimes f$ .

    \item Let  $F_\gamma^T \longrightarrow F_\gamma^{T'}$ be the natural transformation induced by pullback along $T' \longrightarrow T$. 

    \item For any $(E, \phi) \in \Mm_{\widetilde{\SLn}}^{L_e}(T)$, let
$$  f_{\id}^N  \ : \  E \otimes q^*\Oo_\Cc \xlongrightarrow{\sim} E$$
$$ f_{\gamma, \gamma'}^N \ : \ q^*L_\gamma \otimes q^*L_{\gamma'} \otimes E \xlongrightarrow{\sim} q^*L_{\gamma \gamma'} \otimes E $$
 be induced by $\psi^N_{\id}$ and $\psi^N_{\gamma, \gamma'}$.
\end{itemize}
    This data defines an action as in \cite{Romagny} :
    $$ a_N \ : \ \Gamma \times \Mm_{\widetilde{\SLn}}^{L_e}  \longrightarrow  \Mm_{\widetilde{\SLn}}^{L_e}.$$ 
\end{lemma}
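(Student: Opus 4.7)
The plan is to unpack Romagny's definition and verify that the functors $F_\gamma^T$, the pullback natural transformations, and the isomorphisms $f^N_{\gamma,\gamma'}$, $f^N_{\id}$ assemble into a $1$-commutative diagram. First I would check that $F_\gamma^T$ lands in $\Mm_{\widetilde{\SLn}}^{L_e}(T)$. Since $L_\gamma \in \Gamma = \Pp ic^0(\Cc)[n]$, we have $L_\gamma^{\otimes n} \simeq \Oo_\Cc$, so $\det(q^*L_\gamma \otimes E) \simeq q^*L_\gamma^{\otimes n}\otimes \det E \simeq \det E$ pointwise on $|T|$, giving the determinant condition. Tensoring by a degree-zero line bundle preserves slope-semi-stability of the pair, and $\tr(\phi \otimes \id) = \tr \phi = 0$, so the Higgs field condition is preserved. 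Functoriality in $T$ and compatibility of $F_\gamma^T$ with morphisms in the groupoid is routine because $\id_{q^*L_\gamma}\otimes(-)$ commutes with arbitrary pullbacks of vector bundles.

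Next, I would verify the two coherence conditions of \autoref{Romagnydef}. The associativity $2$-morphism $\alpha$ is built from the isomorphisms $f^N_{\gamma,\gamma'}$, so the content is that the family $(\psi^N_{\gamma,\gamma'})_{\gamma,\gamma'\in\Gamma}$ satisfies the cocycle identity
\[
\psi^N_{\gamma\gamma',\gamma''}\circ(\psi^N_{\gamma,\gamma'}\otimes\id_{L_{\gamma''}}) \;=\; \psi^N_{\gamma,\gamma'\gamma''}\circ(\id_{L_\gamma}\otimes\psi^N_{\gamma',\gamma''})
\]
as isomorphisms $L_\gamma\otimes L_{\gamma'}\otimes L_{\gamma''}\xlongrightarrow{\sim} L_{\gamma\gamma'\gamma''}$. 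By construction of the $\psi^N$'s via pullback along the Abel--Jacobi map, it suffices to check the analogous identity for the $\xi^N_{\gamma,\gamma'}$ on $\Pp ic^1_{\Cc/S}$, and after twisting by $N$, for the original sections of $\LL_\gamma\otimes\LL_{\gamma'}\xlongrightarrow{\sim}\LL_{\gamma\gamma'}$ normalised using the trivialising section $s$ of $[0]^*\Pp_\Cc$. Both sides send $v_\gamma\otimes v_{\gamma'}\otimes v_{\gamma''}$ to $v_{\gamma\gamma'\gamma''}$, so the cocycle identity is automatic from the uniqueness statement that a trivialising section determines the identifications. The unit $2$-morphism $\beta$ is determined by $f^N_{\id}$, and the required identities $\psi^N_{\id,\gamma}=\id$ and $\psi^N_{\gamma,\id}=\id$ (modulo the canonical unit isomorphisms $\Oo_\Cc\otimes L_\gamma \simeq L_\gamma$) follow similarly from the fact that $\xi^N_{\id}$ was defined to contain $v_{\id}$.

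Finally, I would verify that tensoring these pointwise isomorphisms with $\id_E$ indeed yields the natural transformations $f^N_{\gamma,\gamma'}$ functorially in $(E,\phi)$ and in $T$. This is immediate because tensor product with a fixed line bundle pulled back from $\Cc$ is an exact functor commuting with base change, and Higgs fields are preserved tautologically since $(\phi\otimes\id)$ is tensored on the same side as $E$. The main (but mild) obstacle is tracking the coherences with the automorphism $\GG_m$ coming from the stack $\Mm_{\widetilde{\SLn}}^{L_e}$ as opposed to $\Mm_{\SLn}^{L_e}$: one has to check that the scalar automorphisms act trivially through $F_\gamma^T$, which is clear since $F_\gamma^T$ is defined as $q^*L_\gamma\otimes(-)$ and tensor product is $\GG_m$-equivariant for the diagonal scaling. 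Assembling these verifications gives the required $1$-commutative diagram, and therefore a group action $a_N\colon \Gamma\times\Mm_{\widetilde{\SLn}}^{L_e}\to\Mm_{\widetilde{\SLn}}^{L_e}$.
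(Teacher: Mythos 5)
Your proposal follows essentially the same approach as the paper: defining the coherence $2$-morphisms via $f^N_{\gamma,\gamma'}$, $f^N_{\id}$ induced by $\psi^N$, and checking Romagny's conditions. You fill in the verifications the paper leaves implicit (well-definedness of $F_\gamma^T$, the cocycle identity via evaluation at the marked points $v_\gamma$ over the proper Picard torsor, the unit law, and the $\GG_m$-compatibility), and the reasoning is correct.
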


\begin{proof}
Let $T \longrightarrow S$ and $(E, \phi) \in \Mm_{\widetilde{\SLn}}^{L_e}(T)$.   The $1$-commutativity in Definition \ref{Romagnydef} requires identifications  $$Hom_{\Mm_{\widetilde{\SLn}}^{L_e}}(E, E \otimes q^*\Oo_\Cc) \simeq Hom_{\Mm_{\widetilde{\SLn}}^{L_e}}(E, E) $$ and $$Hom_{\Mm_{\widetilde{\SLn}}^{L_e}}(E, q^*L_{\gamma \gamma'} \otimes E) \simeq Hom_{\Mm_{\widetilde{\SLn}}^{L_e}}(E, q^*L_\gamma \otimes q^*L_{\gamma'} \otimes E) .$$ 
These identifications can be made using isomorphisms : 
$$  f_{\id}^N  \ : \  E \otimes q^*\Oo_\Cc \xlongrightarrow{\sim} E$$
$$ f_{\gamma, \gamma'}^N \ : \ q^*L_\gamma \otimes q^*L_{\gamma'} \otimes E \xlongrightarrow{\sim} q^*L_{\gamma \gamma'} \otimes E. $$
Since they are induced by $\psi^N_{\id}$ and $\psi^N_{\gamma, \gamma'}$, they are compatible with isomorphisms in $\Mm_{\widetilde{\SLn}}^{L_e}(T)$ and pullbacks along $T' \longrightarrow T$. 
    \end{proof}

\subsubsection{The $\PGLn$-stack and its gerbe}
Let $e \in \ZZ$ and $L_e \in \Pp ic^e(\Cc)$. We consider the following quotient stack whose existence and algebraicity follows from \cite[Proposition 2.6, Theorem 4.1]{Romagny} :
$$ \Mm_{\PGLn}^e = \Mm_{\SLn}^{L_e, \rig}/\Gamma =  \Mm_{\GLn}^{e, \rig}/\Pp ic^0_{\Cc/S}.$$
It does not depend on $L_e$. The tensorisation action by line bundles of $\Gamma$ on $\Mm_{\SLn}^{L_e, \rig}$ is well defined without a choice of $N \in \Pp ic^1_{\Cc/S}$, since scalar automorphisms are all identified. The good moduli space is $M^e_{\PGLn} = M_{\SLn}^{L_e}/\Gamma$. 

We now turn to the construction of a ($n$-torsion) $\GG_m$-gerbe on the stack $\Mm_{\PGLn}^e$. A gerbe on $\Mm_{\PGLn}^e$ is a $\Gamma$-equivariant gerbe on $\Mm_{\SLn}^{L_e, \rig}$. The action $a_N$ equip the gerbe $\alpha \ : \  \Mm_{\widetilde{\SLn}}^{L_e}  \longrightarrow \Mm_{\SLn}^{L_e, \rig}$ with a $\Gamma$-equivariant structure. We write $\alpha_N$ for this $\Gamma$-equivariant gerbe. It descends to a gerbe on the quotient $\Mm_{\PGLn}^e$ which we still denote
\begin{align} \label{pgln gerbe eq}
    \alpha_N \colon \Mm_{\widetilde{\SLn}}^{L_e}  \longrightarrow \Mm_{\PGLn}^e. 
\end{align} 


\subsubsection{The torsor of $\Gamma$-equivariant trivialisations}

Let $S = \Spec k$. Let $N \in \Pp ic^1_{\Cc/S}$. We construct a relative group scheme $\Gamma_N \longrightarrow \Cc$. In \autoref{subsection notations picard} we recalled the relative group scheme $\Pp_\Cc \setminus 0 \longrightarrow \Pp ic^0(\Pp ic^0_{\Cc/S})$, whose fibres are total spaces of line bundles over $\Pp ic^0_{\Cc/S}$. As $\Gamma \hookrightarrow \Pp ic^0_{\Cc/S} $, the fibrewise restriction is a relative subgroup 
$$\Pp_\Cc \setminus 0_{\mid \Gamma} \longrightarrow  \Pp ic^0(\Pp ic^0_{\Cc/S}).$$

Pulling back along the self duality isomorphism, the underlying scheme of $\Pp \setminus 0_{\mid \Gamma}  \longrightarrow \Pp ic^0(\Pp ic^0_{\Cc/S})$ can be identified with $$\coprod_{\gamma  \in \Gamma} \LL_\gamma^* \longrightarrow \Pp ic^0_{\Cc/S}, $$ 
giving $\underset{\gamma  \in \Gamma}{\coprod} \LL_\gamma^*$ a relative group scheme structure. 

\begin{Construction}[The group $\Gamma_N$]
    Using the composition of the Abel-Jacobi map and $g_N$ (Notation \ref{gN}), we can pull back the relative group scheme $\underset{\gamma  \in \Gamma}{\coprod} \LL_\gamma^* \longrightarrow \Pp ic^0_{\Cc/S}$ over $\Cc$. We call this relative group $\Gamma_N$, whose underlying scheme structure is isomorphic to $$ \Gamma_N = \coprod_{\gamma} L_\gamma^* \longrightarrow \Cc.$$
\end{Construction}

\begin{remark}
    The group scheme structure of $\Gamma_N$ does not come from the theta group of $\LL_\gamma$. Instead, for $c \in \Cc$, it comes from the theta group of $\Oo_{\Pp ic^0_{\Cc/S}} ([\Oo_\Cc(c)] - [0])$. It is the subgroup given by restriction along $\Gamma \subset \Pp ic^0_{\Cc/S}$. 
\end{remark}
\begin{remark}
  The group $\Gamma_N$ contains more information than its underlying set $\underset{\gamma  \in \Gamma}{\coprod} L_\gamma \setminus \{0\}$. The chosen trivialising section of the zero fibre of the Poincaré sheaf induces a canonical splitting  $$[N]^*{g_N^* \coprod_{\gamma} L_\gamma^*} \cong \Gamma \times_S \GG_m,$$ 
  sending the rational points $v_\gamma$ (introduced in \ref{gamma action}) to $(\gamma, 1)$. 

  If $c\in \Cc$ and $N = \Oo_{\Cc}(c)$, then $c^* \Gamma_N \cong  \Gamma \times_S \GG_m$. In this context, the group $\Gamma_N$ is called $\Tilde{\Gamma}$ in \cite{HT}. 
\end{remark}

There is a projection $ \Gamma_N \longrightarrow \Gamma$. In fact, $\Gamma_N$ is a non-trivial extension of relative group schemes over $\Cc$,
    $$ 1 \longrightarrow \GG_m \xlongrightarrow{\psi_1^N} \Gamma_N \longrightarrow
 \Gamma \longrightarrow 1.$$

The relative group scheme $\Gamma_N \rightarrow \Cc$ allows us to characterise the trivialisations of the gerbe $\alpha_N$ over $\Mm^e_{\PGLn}$. 

\begin{lemma}[Trivialisations of $\alpha_N$] \label{equiv lemma}
   A trivialisation over a $\Gamma$-scheme $T \longrightarrow \Mm_{\SLn}^{L_e, \rig}$ of $\alpha_N$ as a $\Gamma$-equivariant gerbe is a $\Gamma_N$-equivariant object in $\Mm_{\widetilde{\SLn}}^{L_e}(T)$ with $\GG_m \hookrightarrow \Gamma_N$ acting with weight $1$.
\end{lemma}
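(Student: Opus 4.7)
The plan is to unpack both sides of the claimed equivalence and check that they encode the same data. Both reduce to an object $(E,\phi) \in \Mm_{\widetilde{\SLn}}^{L_e}(T)$ lifting the structure map $T \to \Mm_{\SLn}^{L_e,\rig}$, together with a coherent family of isomorphisms indexed by $\Gamma$, governed by the trivialising sections $\psi^N_{\gamma,\gamma'}$.

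First, I would spell out a $\Gamma$-equivariant trivialisation of $\alpha_N$. Writing $\sigma_\gamma\colon T \to T$ for the action of $\gamma$, such a trivialisation is an object $(E,\phi) \in \Mm_{\widetilde{\SLn}}^{L_e}(T)$ lifting the structure map, together with, for each $\gamma \in \Gamma$, an isomorphism
$$\theta_\gamma\colon F_\gamma^T(E,\phi) = (q^*L_\gamma \otimes E, \phi \otimes \id) \xlongrightarrow{\sim} \sigma_\gamma^*(E,\phi),$$
satisfying the cocycle relation between $\theta_\gamma$, $\sigma_\gamma^*\theta_{\gamma'}$ and the coherence $f^N_{\gamma,\gamma'}$. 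The compatibility with the Higgs field is automatic since $\phi$ transforms by $\phi \otimes \id$ under $F_\gamma^T$, so the essential datum is carried by the underlying vector-bundle isomorphisms.

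Next, I would repackage $\{\theta_\gamma\}$ through $\Gamma_N$. An isomorphism $q^*L_\gamma \otimes E \xlongrightarrow{\sim} \sigma_\gamma^*E$ is the same as a $\Cc$-relative morphism from the $\GG_m$-torsor $L_\gamma^* \to \Cc$ to the scheme of isomorphisms between $E$ and $\sigma_\gamma^* E$, obtained by contracting with local sections of $L_\gamma^*$. Summing over $\gamma \in \Gamma$, the collection $\{\theta_\gamma\}$ becomes the same datum as a morphism of $\Cc$-schemes $\Gamma_N = \coprod_{\gamma} L_\gamma^* \longrightarrow \mathrm{Isom}_{\Cc}(E, \sigma^*E)$. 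The central subgroup $\GG_m \hookrightarrow \Gamma_N$ sits over $\gamma = 0$ as $L_0^* = \Oo_\Cc^*$; contracting the identity $\theta_0 = \id_E$ against a scalar $\lambda$ recovers scalar multiplication by $\lambda$ on $E$, which is precisely the weight-$1$ condition.

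The main obstacle is verifying that the cocycle satisfied by $\{\theta_\gamma\}$ matches the multiplicativity of this morphism with respect to the group law of $\Gamma_N$, i.e.\ that it defines a $\Gamma_N$-action. This is essentially tautological: by its construction, the group law on $\Gamma_N$ is induced from the theta-group structure on $\Pp_\Cc \setminus 0$ pulled back along $\Cc \hookrightarrow \Pp ic^1_{\Cc/S} \xlongrightarrow{g_N} \Pp ic^0_{\Cc/S}$, while the coherences $\psi^N_{\gamma,\gamma'}$ (and hence $f^N_{\gamma,\gamma'}$) are defined through the same Poincaré normalisations that produce this group law, via the sections $v_\gamma$. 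Evaluating both cocycles against $v_\gamma$ reduces their comparison to the defining identities of $\Gamma_N$ and of the action $a_N$, which yields the equivalence and completes the proof.
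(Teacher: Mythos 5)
Your proposal is correct and follows essentially the same route as the paper's proof: unpack the $\Gamma$-equivariant trivialisation as a vector-bundle lift $(E,\phi)$ together with isomorphisms $\theta_\gamma$ satisfying a cocycle built from $f^N_{\gamma,\gamma'}$, repackage the collection $\{\theta_\gamma\}$ as a map out of $\coprod_\gamma L_\gamma^*$, and observe that multiplicativity holds because both the group law on $\Gamma_N$ and the coherences $\psi^N_{\gamma,\gamma'}$ are built from the same normalised sections $v_\gamma$ of the Poincar\'e bundle. The only minor imprecision is the claim that compatibility with the Higgs field is ``automatic''---it is a genuine (if routine) condition to impose and check, not a consequence of $\phi$ transforming as $\phi \otimes \id$; the paper carries this condition along explicitly. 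Your packaging via $\mathrm{Isom}_{\Cc}(E,\sigma^*E)$ versus the paper's commuting diagram exhibiting $\EE$ as a $\Gamma_N$-equivariant bundle is only a difference of bookkeeping.
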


\begin{proof}
 A $\GG_m$-gerbe is a $B\GG_m$-torsor : it is étale-locally trivial, and a local trivialisation is given by a section. A $\Gamma$-equivariant gerbe is a $\Gamma$-equivariant $B\GG_m$-torsor, hence a local trivialisation is given by a $\Gamma$-equivariant section over a $\Gamma$-scheme. 

 Let $T \longrightarrow S$ be a $\Gamma$-scheme, where $t_\gamma  \ : \ T \longrightarrow T$ denote the action of $\gamma \in \Gamma$.  Let $T \longrightarrow \Mm_{\SLn}^{L, \rig}$ be $\Gamma$-equivariant. It corresponds to a projective Higgs bundle $(\PP\EE, \Phi)$ on $T \times_S \Cc$ of determinant $[q^*L] \in \Pp ic_{\Cc/ S}(T) = \Pp ic(T \times_S \Cc)/\Pp ic(T) $. 
 
A $\Gamma$-equivariant section of $\alpha_N$ is a lift of $\PP\EE$ to a vector bundle $\EE \longrightarrow T \times \Cc$ of determinant $q^*L \in \Pp ic(T \times_S \Cc)$, with a Higgs field, such that for every $\gamma \in \Gamma$, for every $t \in |T|$ there is an isomorphism $$g_{\gamma, t} \ : \ t^*(\EE  \otimes q^*L_\gamma) \xrightarrow{\sim}  t^* t_\gamma^* \EE$$ 
as vector bundles over $\Cc$, compatible with the Higgs field and such that for $\gamma, \gamma' \in \Gamma$ : 

$$\begin{tikzcd}
    t^*(\EE \otimes q^*L_\gamma \otimes q^*L_{\gamma'}) \arrow[r, "\id \otimes g_{\gamma, t}"] \arrow[d, "\id \otimes f^N_{\gamma, \gamma'}"] &  t^* t_\gamma^* (\EE \otimes q^*L_{\gamma'})  \arrow[d, "g_{\gamma', t}"] \\
     t^*(\EE \otimes q^*L_{\gamma\gamma'}) \arrow[r, "g_{\gamma\gamma', t}"] & 
     t^* t_{\gamma\gamma'}^*\EE 
\end{tikzcd}$$

Since there is a projection $\Gamma_N \longrightarrow \Gamma$, $\Gamma_N$ acts on $T \times_S \Cc$ via the $\Gamma$-action on $T$. Hence, there is a commuting diagram 

$$\begin{tikzcd}
    \Gamma_N \times_\Cc \EE \arrow[r, "f"] \arrow[d] & \underset{\gamma \in \Gamma}{\coprod} q^*L_\gamma \otimes \EE  \arrow[r, "g"] & \EE \arrow[d] \\
    \Gamma \times T \times_S \Cc \arrow[rr, "{(\gamma, t, c) \mapsto( \gamma \cdot t, c)}"] & & T \times_S \Cc
\end{tikzcd}$$
where the map $f$ is induced by the identification  $\Gamma_N \simeq  \underset{\gamma \in \Gamma}{\coprod} L_\gamma \setminus \{0\}$ and the natural maps $L_\gamma \times_{\Cc} \EE \longrightarrow q^*L_\gamma \otimes \EE$. The map $g$ is induced by maps $g_{\gamma, t}$ above. For $\gamma, \gamma' \in  \Gamma$, the construction of $\Gamma_N$ and $f_{\id}^N$, $f^N_{\gamma, \gamma'}$ use the same chosen rational point $v_\gamma$ in $\LL_\gamma$. Hence, the product in $\Gamma_N$ is compatible with $g$ so that the diagram above makes $\EE$ into a $\Gamma_N$-equivariant bundle over $T \times_S \Cc$.

 Moreover, $\GG_m \hookrightarrow \Gamma_N$, and using the compatibility between $g_{\id, t}$ and $f^N_{\id}$, for $\lambda \in \GG_m$ the upper map sends $(\lambda, v )$ to $\lambda v$, so $\GG_m$ acts with weight $1$. 
\end{proof}

\subsection{Hitchin fibration and spectral curves} \label{pryms}

The coarse moduli spaces $M_{\SLn}^L$ and $M_{\PGLn}^e$ admit a morphism to the affine scheme $\Aa = \underset{{i=2}}{\overset{n}{\bigoplus}} H^0(\Cc, iD)$ over $S$. It sends a Higgs bundle to the coefficients of its characteristic polynomial. This morphism is proper \cite[Thm 6.11]{Simpsonproperness} and is called the Hitchin fibration. Moreover there is an associated relative spectral curve $\Tilde{\Cc}_\Aa \longrightarrow \Aa$. 

We denote $\Aa^{\textnormal{sm}}(S)$ the locus of smooth and irreducible spectral curves. We fix a point $a \in \Aa^{\textnormal{sm}}(S)$ corresponding to a spectral curve $\tilde{\Cc} \longrightarrow \Cc$.

\subsubsection{The $\SLn$-fibration} Let $L \in \Pp ic^d(\Cc)$. We denote $$h_{\SLn} \ : \ M^L_{\SLn} \longrightarrow \Aa$$ the relative Hitchin fibration over $S$. 

The branched cover $\pi \ : \ \Tilde{\Cc} \longrightarrow \Cc$ over $S$ gives rise to a relative Norm map 
$$\Nm_{\Tilde{\Cc}/\Cc/S} \ : \ \Pp ic_{\Tilde{\Cc}/S} \longrightarrow \Pp ic_{\Cc/S}.$$  Let us recall the construction, following \cite[§3]{HP}. Given a line bundle $\Ll$ over $\Tilde{\Cc}$, $\pi_* \Ll$ is an invertible $\pi_* \Oo_{\Tilde{\Cc}}$-module and $\pi_* \Oo_{\Tilde{\Cc}}$ is a locally free rank $n$ sheaf with trivialising open cover $(U_{i})$. Now, take a $1$-cocycle $(\phi_{ij})_{i,j}$ representing the class of $\pi_*\Ll$ in $H^1( \Cc, (\pi_* \Oo_{\Tilde{\Cc}})^*) $. For each $i,j$, consider the determinant of the multiplication by $\phi_{ij}$ in the free $\Oo_\Cc$-module $\pi_* \Oo_{\Tilde{\Cc}}(U_{ij})$. This gives a $1$-cocycle valued in $\Oo_{\Cc}^*$, and its class in $ \Pp ic_{\Cc/S}(S)$, called the norm of $\Ll$, depends only on the class of $\Ll$ in $ \Pp ic_{\Tilde{\Cc}/S}(S)$. 

The norm verifies the following identity \cite[Cor 3.12]{HP} 
$$ \det(\pi_* N) = \Nm_{\Tilde{\Cc}/\Cc/S}(N) \otimes  \det(\pi_* \Oo_{\Tilde{\Cc}})^{-1}. $$

The pullback $\pi^* \colon \Pp ic^0_{\Cc/S} \longrightarrow \Pp ic^0_{\Tilde{\Cc}/S}$ is injective\footnote{We can give a quick argument in our case. Let $M \in \ker \pi^*$. By projection formula, $\pi_* \Oo_{\tilde{\Cc}} \simeq M \otimes \pi_* \Oo_{\tilde{\Cc}}$. Since $\Oo_\Cc$ is a factor of $\pi_* \Oo_{\tilde{\Cc}}$, it means that $M$ should be a factor of $\pi_* \Oo_{\tilde{\Cc}}$. But all of its factor are of the form $\Oo_{\Cc}(nD)$. As $D$ is effective, $M= \Oo_{\Cc}$.}. The norm map is dual to $\pi^*$, so the kernel of $\Nm_{\Tilde{\Cc}/\Cc/S} $ is connected. It is an abelian $S$-scheme denoted $\Pp rym^0(\Tilde{\Cc}/\Cc/S)$. Since $\Tilde{\Cc}$ is fixed here, we just denote it $\Pp rym^0$. 

We use $\Pp rym^{M}$ to denote the fibre $\Nm_{\Tilde{\Cc}/\Cc/S}^{-1} (M)$. It is a $\Pp rym^0$-torsor since $Nm$ is a morphism of abelian schemes. Since $\Tilde{\Cc}$ is embedded in $Tot\Oo_{\Cc}(D)$, $\pi^*\Oo_{\Cc}(D)$ carries a tautological section $\lambda$. The spectral correspondence implies : 
\begin{align*}
   \Pp rym^{L \otimes \Dd} &\xlongrightarrow{\sim}  h_{\SLn}^{-1}(a)  \\
   \Ll &\longmapsto (\pi_* \Ll, \pi_* \lambda)
\end{align*} 
where $ \Dd = \det(\pi_* \Oo_{\Tilde{\Cc}})$. Thus, the Hitchin fibre $h_{\SLn}^{-1}(a)$ is a $\Pp rym^0$-torsor over $S$.

\begin{remark}
If $n = 2k + 1$, there is a $\mathit{n}$th root of $\Dd$ given by $\Oo_\Cc(-D)^{k}$. Similarly if $\deg D$ is even, and if there exists a square root $D^{1/2}$, then $\Dd$ has a $\mathit{n}$th root $(\Oo_\Cc(-D^{1/2}))^{n-1}$. 
In these cases, there is an isomorphism of $\Prym^0$-torsors
$$ \Pp rym^{L \otimes \Dd} \xrightarrow{\sim}  \Pp rym^{L} $$
which explains why $d'$ does not appear in \cite{HT, GWZ}. 
\end{remark}

\subsubsection{The $\PGLn$-fibration}
Consider the Hitchin fibration 
 $$h_{\PGLn} \ : \ M^e_{\PGLn} \longrightarrow \Aa.$$

 As shown in \cite{HT} and \cite{GWZ}, the dual abelian variety of $\Pp rym^0$ is the quotient $\Pp rym^0/\Gamma$. 
 
The spectral correspondence implies that the $\PGLn$-Hitchin fibre of $a \in \Aa(S)$ is a $\Pp rym^0/\Gamma$-torsor, denoted $\Pp rym^{e'}_{\PGLn}$. For any $L_e \in \Pp ic^{e}(\Cc)$, one has 
$$ \Pp rym^{e'}_{\PGLn}=  \Pp rym^{L_e \otimes \Dd}/\pi^* \Gamma = \Pp ic^{e'}_{\Tilde{\Cc}/S} /\pi^* \Pp ic^0_{\Cc/S}.$$

\begin{notation} \label{notation d'}
    We write :
    \begin{itemize}[$\cdot$]
        \item $\Dd = \det(\pi_* \Oo_{\Tilde{\Cc}}) = \overset{n-1}{\underset{i=0}{\bigotimes}} \Oo_\Cc(-iD) = \Oo_\Cc(D)^{-\frac{n(n-1)}{2}}$ (see \cite{klingSO, BNR89} for proofs).
        \item $d' = d + \deg \Dd = d - \deg D \cdot \frac{n(n-1)}{2}$
        \item $e' =  e + \deg \Dd = e - \deg D \cdot \frac{n(n-1)}{2}$
    \end{itemize}
\end{notation}

\subsection{Regularity and stringy twisted invariants}
\label{section symplectic}

\subsubsection{Smoothness and normality}

Let $S = \Spec \CC$. The cotangent complex of the twisted moduli stack $\Mm_{\SLn}^L$ is well understood. At $(E, \phi)$, it is quasi-isomorphic to the hypercohomology of 
$$ End_0(E, \phi) \xrightarrow{ad(\phi)}  End_0(E, \phi) \otimes D, $$
which satisfies Serre duality. In particular, 
\begin{itemize}[-]
    \item When $D > K_\Cc$, $\Mm_{\SLn}^L$ is smooth.
    \item Whem $D \sim K_\Cc$, moduli stacks of (twisted) $G$-Higgs bundles are symplectic \cite[\textsection 10.3.24]{Davison}. Thus, they admit local models by totally negative quivers with potential, hence have rational singularities \cite{Tanguy}. In particular $\Mm_{\SLn}^L$ is a normal stack. 
\end{itemize}

\subsubsection{Stringy twisted invariants}
We refer to \cite[Section 2]{GWZ} and \cite{HT} for stringy twisted invariants.

 Based on a transgression construction and a choice of embedding $\mu_n \hookrightarrow \QQ_\ell$, a $n$-torsion $\GG_m$-gerbe $\alpha$ on a stack $\Xx$ is associated with an $\ell$-adic local system $L_\alpha$ on the inertia stack $I\Xx$. The stringy twisted invariants are invariants on $I\Xx$ with respect to coefficients twisted by $L_\alpha$.





Let $X$ be a variety acted on by a finite abelian group $\Gamma$. Let $Y$ be the good moduli space of the stack $X/\Gamma$. Let $\alpha$ be a $|\Gamma|$-torsion $\GG_m$-gerbe on $X/\Gamma$. It gives rise to a $\ell$-adic local system $L_\alpha$ on the inertia stack $I(X/\Gamma) = \underset{\gamma \in \Gamma}{\coprod} X^\gamma/\Gamma$, whose restriction to $X^\gamma/\Gamma$ is denoted $L_{\alpha, \gamma}$. 

Moreover, assume for simplicity that the fixed point sets $X^\gamma$ are connected. It is the case for $M^L_{\SLn}$ when $(d,n)=1$. Let $F_\gamma$ be a set of rational numbers indexed by $\Gamma$ (called fermionic shifts). Let $(X^\gamma/\Gamma (k))_{iso}$ denote the isomorphism classes of $k$-points of the stack $X^\gamma/\Gamma$. 

\begin{definition}[stringy, twisted invariants] \label{defi stringy}
  In the situation above with $X/\Gamma \longrightarrow Y$, a gerbe $\alpha$ on $X/\Gamma$ and fermionic shifts $\{F_\gamma\}_{\gamma \in \Gamma}$ :
\begin{enumerate}[(a)]

    \item \textbf{(stringy twisted point counts)}  Let $S = \Spec \FF_q$. 
    $$ \#_{\textnormal{st}, \alpha} (Y) = \sum_{\gamma \in \Gamma} \ \sum_{x \in X^\gamma/\Gamma (\FF_q)_{iso}} q^{F_\gamma} \frac{\Tr (\Fr \ | \ (L_{\alpha, \gamma})_x) }{|Aut(x)|}. $$
    \item \textbf{(stringy twisted $E$-polynomials)} Let $S = \Spec \CC$. 
    $$ E_{\textnormal{st}, \alpha} (Y) = \sum_{\gamma \in \Gamma} \sum_{p, q \in \ZZ}  \dim H_c^{p,q} (X^\gamma, L_{\alpha, \gamma})^\Gamma u^p v^q (uv)^{F_\gamma},  $$
    where we use compactly supported cohomology. 
\end{enumerate}
\end{definition}

In the case of $M^e_{\PGLn} \simeq M^{L_e}_{\SLn}/\Gamma$, we write $M^{L_e, \gamma}_{\SLn}$ for the $\gamma$-fixed points. The fermionic shifts are given by $$F_\gamma = 2 \cdot  codim_{\Aa}(\Aa^\gamma) = codim_{M^{L_e}_{\SLn}}(M^{L_e, \gamma}_{\SLn})$$ 
where $\Aa^{\gamma}$ is the image of $M^{L_e, \gamma}_{\SLn}$ by the Hitchin map. It does not depend on $L_e$ (see \cite[Lemma 5.6]{HPL} for a computation).



\section{SYZ-type mirror symmetries} \label{section SYZ}



Set $S = \Spec k$ where $k$ is a field with $ \charac k \nmid n $, with $\mu_{n, k}$ and $\Gamma = \Pp ic^0_{\Cc/S} [n]$ constant group schemes. 

Let $a \in \Aa^{\textnormal{sm}}(k)$, corresponding to a spectral curve $\Tilde{\Cc} \xlongrightarrow{\pi} \Cc$.

\subsection{First \texorpdfstring{$SYZ$}{SYZ}-type symmetry}

The following proposition generalises \cite[Proposition 3.6]{HT} to any field $k$ satisfying our assumptions. 

\begin{proposition}[First $SYZ$-type symmetry]
\label{firstSYZ}
Let $d \in \ZZ$ and $L \in \Pp ic^d_{\Cc/S}(S)$. Let $\alpha$ denote the gerbe $\Mm^L_{\SLn} \rightarrow \Mm^{L, \rig}_{\SLn}$ (\ref{slngerbe eq}). Set $L' = L \otimes \Dd$ (Notation \ref{notation d'}). For any $e \in \ZZ$, there is an isomorphism of $\Pp rym^0/\Gamma$-torsors over $S$ :
    $$ \Triv^0 (\alpha^{e} \ | \ \Pp rym^{L'}) \simeq \Pp rym^{e}_{\PGLn}. $$
\end{proposition}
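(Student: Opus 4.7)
The plan is to generalise the argument of \cite[Proposition 3.6]{HT} from $k = \CC$ to an arbitrary field $k$ satisfying the standing assumptions, and to an arbitrary determinant line bundle $L$. The core idea is that the pullback of the gerbe $\alpha^e$ to $\Pp rym^{L'} \cong h_{\SLn}^{-1}(a)$ is controlled by the determinant of a universal family on $\Pp rym^{L'} \times_S \tilde{\Cc}$, and that its torsor of trivialisations then matches with the $\PGLn$-Hitchin fibre via the self-duality between $\Pp rym^0$ and $\Pp rym^0/\Gamma$ recalled in \autoref{pryms}.

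First I would build a universal Higgs bundle on $\Pp rym^{L'}$. Using the spectral correspondence to identify $h^{-1}_{\SLn}(a)$ with $\Pp rym^{L'}$, and choosing a normalisation of the Poincar\'e sheaf on $\Pp rym^{L'} \times_S \tilde{\Cc}$ (the role played by a rational base point in \cite{HT}, which we substitute by $N \in \Pp ic^1(\Cc)$ since no rational base point of $\tilde{\Cc}$ need exist over $k$), one obtains a Poincar\'e line bundle $\Pp$ on $\Pp rym^{L'} \times_S \tilde{\Cc}$ equipped with the tautological section $\lambda$. Pushing forward along $\id \times \pi$ yields a rank-$n$ universal Higgs bundle $\Ee = (\id \times \pi)_* \Pp$ on $\Pp rym^{L'} \times_S \Cc$, and the identity $\det(\pi_*\Ll) = \Nm(\Ll) \otimes \Dd^{-1}$ recalled in \autoref{pryms} gives $\det \Ee \cong q^*L \otimes p^*\Mm$ for a line bundle $\Mm$ on $\Pp rym^{L'}$ depending on the chosen normalisation.

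Next I would identify the pullback of $\alpha^e$. Since $\Ee$ has determinant $q^*L$ only up to the pullback of $\Mm$, the classifying morphism $\Pp rym^{L'} \longrightarrow \Mm^{L, \rig}_{\SLn}$ admits a lift to $\Mm^{L}_{\widetilde{\SLn}}$ precisely after the choice of an $n$-th root of $\Mm$; equivalently, the pullback of $\alpha_n$ is the Kummer $\mu_n$-gerbe of $n$-th roots of $\Mm$, and under the map $(\ref{ntorsgerbes})$ the pullback of $\alpha^e$ is represented by the class of $\Mm^e$ in $H^2(\Pp rym^{L'}, \GG_m)[n]$. Feeding this into Definition \ref{triv0} identifies $\Triv^0(\alpha^e \mid \Pp rym^{L'})$ with the $\Pp ic^0(\Pp rym^{L'})$-torsor parametrising degree-zero $n$-th roots of $\Mm^e$ modulo the diagonal $H^1_{\textnormal{\'et}}(\Pp rym^{L'}, \mu_n)$-action.

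Finally I would match this torsor with $\Pp rym^e_{\PGLn}$. The Weil pairing gives a canonical isomorphism $\Pp ic^0(\Pp rym^{L'}) \cong \Pp ic^0(\Pp rym^0) \cong \Pp rym^0/\Gamma$, so pulling back a degree-zero line bundle $\Nn$ with $\Nn^n \cong \Mm^e$ along the Abel--Prym map $\tilde{\Cc} \to \Pp rym^{L'}$ induced by the chosen normalisation produces a line bundle on $\tilde{\Cc}$ well-defined modulo $\pi^*\Pp ic^0(\Cc)$, i.e.\ a point of $\Pp ic_{\tilde{\Cc}/S}/\pi^*\Pp ic^0_{\Cc/S}$; tracking degrees via Notation \ref{notation d'} shows that the constraint $\Nn^n \cong \Mm^e$ isolates the component of degree $e'$, which is exactly $\Pp rym^e_{\PGLn}$. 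The main obstacle will be the degree bookkeeping in this last step, namely checking that the normalisation choice, the Kummer sequence, and the shift $e \mapsto e'$ conspire to pick out the correct component. A secondary subtlety is making the whole construction functorial in $S$ without rational base points on $\tilde{\Cc}$; this forces one to replace the point $c \in \Cc$ of \cite{HT} by the datum $N \in \Pp ic^1(\Cc)$ and to descend the isomorphism of $\bar{k}$-schemes using Galois-equivariance of the Weil pairing.
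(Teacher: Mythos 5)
Your proposal contains a genuine gap that the paper's own proof carefully avoids. You base everything on the existence of "a Poincaré line bundle $\Pp$ on $\Pp rym^{L'} \times_S \tilde{\Cc}$ equipped with the tautological section $\lambda$", and on a later "Abel--Prym map $\tilde{\Cc} \to \Pp rym^{L'}$". Over a general base field $k$ satisfying the standing hypotheses, neither need exist: $\Pp rym^{L'}$ may have no $k$-rational point, and $\tilde{\Cc}$ may carry no degree-one line bundle over $k$. You acknowledge this and propose to substitute $N \in \Pp ic^1(\Cc)$ for the missing base point, but this does not resolve the issue. The bundle $N$ lives on $\Cc$, and $\pi^* N$ has degree $n$, not $1$, on $\tilde{\Cc}$; it does not provide an Abel--Jacobi embedding $\tilde{\Cc} \hookrightarrow \Pp ic^0_{\tilde{\Cc}/S}$ nor a normalisation of a Poincaré sheaf for $\tilde{\Cc}$. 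Moreover, a telling sign that the wrong inputs are being used is that $N$ does not occur in the statement of \autoref{firstSYZ} at all --- in the paper, $N$ is only needed for the $\Gamma$-equivariant gerbe $\alpha_N$ of the \emph{second} SYZ symmetry (\autoref{secondSYZ}); introducing it here would force you at the end to prove independence of the isomorphism from the choice of $N$, a step your proposal does not contain.

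The paper's proof sidesteps the existence problem entirely by working at the level of torsors. After reducing to $e=1$ (a simplification your proposal omits), the set $\Tt = \Triv^0(\alpha \mid \Pp rym^{L'})$ of degree-normalised universal line bundles on $\Pp rym^{L'} \times \tilde{\Cc}$ is treated as a \emph{possibly empty} $\Pp rym^0/\Gamma$-torsor; it is then realised as the $\Pp ic^0_{\Cc/S}$-quotient of a larger $\Pp ic^0_{\tilde{\Cc}/S}$-torsor $\Tt'$ of universal bundles on $\Pp ic^{d'}_{\tilde{\Cc}/S} \times \tilde{\Cc}$, in parallel with $\Pp rym^1_{\PGLn} = \Pp ic^1_{\tilde{\Cc}/S}/\Pp ic^0_{\Cc/S}$. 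The comparison is completed by writing down, functorially in $T \to S$, a morphism $\Pp ic^1_{\tilde{\Cc}/S} \to \Tt'$ of $\Pp ic^0_{\tilde{\Cc}/S}$-torsors (the map $L_1 \mapsto h_{L_1}^* \Pp_{\tilde{\Cc}}$), and invoking that any morphism of torsors under a fixed group scheme is automatically an isomorphism. No global universal object or rational point on $\tilde{\Cc}$ or $\Pp rym^{L'}$ is ever required. To repair your approach you would need to replace "build $\Ee$, compute its determinant, identify $\alpha^e$ with a Kummer gerbe of roots of $\Mm^e$" --- which presupposes objects that may not exist over $k$ --- with functorial maps between sheaves of torsors as the paper does.
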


\begin{proof}
    We can follow closely the proof by Hausel and Thaddeus. First, the statement for $e=1$ suffices, since the map $Triv^0$ in \ref{sub torsor of trivial} is a group morphism, and we can iterate the group law for $\Pp rym^0/\Gamma$-torsors over $S$ on both sides. 
    
  A trivialisation of the $\mu_n$-gerbe $\Mm_{\SLn}^{L} \longrightarrow \Mm_{\SLn}^{L, \rig}$ over $ \Pp rym^{L'} \longrightarrow \Mm_{\SLn}^{L, \rig}$ is a section $$ \Pp rym^{L'} \longrightarrow \Pp rym^{L'} \times_{\Mm_{\SLn}^{L, \rig}} \Mm_{\SLn}^{L}.$$ 
    Using the spectral correspondence, it is a universal line bundle  $\Ll$ over $\Pp rym^{L'} \times \tilde{\Cc}$ such that $\det \pi_* \Ll \simeq pr_2^*L$. 
    
    Let $\Tt = \Triv^0 (\alpha \ | \ \Pp rym^{L'})$. It is the set of universal line bundles over $\Pp rym^{L'} \times \tilde{\Cc}$ such that any pullback over $\Pp rym^{L'}$ is of degree $0$. The universal property implies that two such bundles differ by an element of $\Pp ic^0 (\Pp rym^{L'} ) \simeq \Pp rym^0/\Gamma$. 
    
    This  $\Pp rym^0/\Gamma$-torsor $\Tt$ is a quotient by $\Pp ic^0_{\Cc/S}$ of the $\Pp ic^0_{\Tilde{\Cc}/S}$-torsor 
    $$\Tt' = \big\{ \Ll \longrightarrow \Pp ic^{d'}_{\Tilde{\Cc}/S} \times \Tilde{\Cc} \text{ universal } \mid \ y^* \Ll \in \Pp ic^0(\Pp ic^{d'}_{\Tilde{\Cc}/S}) \ \forall \ y \in |\tilde{\Cc}| \big\}. $$ 
    In fact, 
    \begin{itemize}[-]
        \item Since $\Pp ic^0_{\Cc/S} \cong \Pp ic^0_{\Pp ic^{d'}_{\Cc/S}/S}$, the action of $\Pp ic^0_{\Cc/S}$ is by pullback of line bundles over $\Pp ic^{d'}_{\Cc/S}$ along
   $$ \begin{tikzcd}
       \Pp ic^{d'}_{\Tilde{\Cc}/S} \times \Tilde{\Cc}  \arrow[r, "pr_1"] & \Pp ic^{d'}_{\Tilde{\Cc}/S} \arrow[r, "\Nm" ]& \Pp ic^{d'}_{\Cc/S}.
    \end{tikzcd}$$
    \item There is a natural map $\Tt' \longrightarrow \Tt$ given by restriction, since $\Prym^L \hookrightarrow \Pp ic^{d'}_{\tilde{\Cc}/S}$. It is compatible with the $\Pp ic^0_{\tilde{\Cc}/S}$-action, and since $\Tt$ is a  $\Pp ic^0_{\tilde{\Cc}/S}/\Pp ic^0_{\Cc/S}$-torsor, it is $\Pp ic^0_{\Cc/S}$-invariant. Hence, it induces a map $$\Tt'/\Pp ic^0_{\Cc/S} \longrightarrow \Tt$$ of $\Pp rym^0/\Gamma$-torsors, which must be an isomorphism. 
    \end{itemize}

Moreover, $\Pp rym^{1}_{\PGLn}$ is also a quotient by $\Pp ic^0_{\Cc/S}$ of a $\Pp ic^0_{\Tilde{\Cc}/S}$-torsor, namely $\Pp ic^1_{\Tilde{\Cc}/S}$. Hence, it suffices to show an isomorphism of $\Pp ic^0_{\Tilde{\Cc}/S}$-torsors $ \Tt' \xlongrightarrow{\sim} \Pp ic^1_{\Tilde{\Cc}/S} $.

Let $T$ be a $S$-scheme. For $L_1 \ : \ T \longrightarrow  \Pp ic^1_{\Tilde{\Cc}/S}$, the pullback of the Poincaré line bundle $\Pp_{\Tilde{\Cc}}$ along 

$$\begin{tikzcd}
   \Tilde{\Cc} \times \Pp ic^{d'}_{\Tilde{\Cc}/S} 
 \arrow[r, "h_{L_1}"] & \Pp ic^0_{\Tilde{\Cc}/S} \times \Pp ic^0(\Pp ic^0_{\Tilde{\Cc}/S} ) \\ 
  (y, L_{d'})  \arrow[r] & \big( \Oo_{\Tilde{\Cc}}(y) \otimes L_1^{-1}, \Oo_{\Pp ic^0_{\Tilde{\Cc}/S} }( [L_{d'} \otimes L_1^{-d'}] - [0]) \big)
\end{tikzcd}$$

is a universal line bundle over $(\Tilde{\Cc} \times \Pp ic^{d'}_{\Tilde{\Cc}/S}) \times_S T $, whose pullback over $\Pp ic^{d'}_{\Tilde{\Cc}/S}$ is of degree $0$. It shows that $\Pp ic^1_{\Tilde{\Cc}/S}$ maps to $\Tt'$ via
$$L_1 \longmapsto  h_{L_1}^* \Pp_{\Tilde{\Cc}}.$$  
As both are $\Pp ic^0_{\Tilde{\Cc}/S}$-torsor, they are isomorphic. 
\end{proof}

\begin{remark}[]
For the case $L = \Dd^{-1}$, the $\GG_m$-gerbe $\Mm_{\widetilde{SLn}}^{\Dd^{-1}} \longrightarrow \Mm_{\SLn}^{\Dd^{-1}, \rig} $ restricted to a Hitchin fibre can be written as an extension of commutative group stacks : 
    $$ 0 \longrightarrow B\GG_m \longrightarrow \PP rym^0 \longrightarrow \Pp rym^0  \longrightarrow 0 $$
where $\PP rym^{0}$ is the substack of the Picard stack $\PP ic^0_{\Tilde{\Cc}/S}$ of line bundles $L$ such that $\Nm(L) = \Oo_\Cc$. 
Applying the dualising functor $\Hh om(-, B\GG_m)$, we get an exact sequence (see \cite[§7]{Brochard}) : 
     $$ 0 \longrightarrow Prym^0/\Gamma \longrightarrow Pic(\Tilde{\Cc})/Pic(\Cc)  \xrightarrow{\deg} \ZZ \longrightarrow 0 $$
which corresponds to a $Prym^0/\Gamma$-torsor, splitting when $\Pp ic^1(\Tilde{\Cc})/\Pp ic(\Cc) = \Pp rym^{1}_{\PGLn} $ has a section.

\end{remark}

\subsection{Second \texorpdfstring{$SYZ$}{SYZ}-type symmetry} 

Fix $N \in \Pp ic^1_{\Cc/S}(S)$. The following extends \cite[Proposition 3.6]{HT}, where $N$ is taken to be $\Oo_{\Cc}(c)$ for a basepoint $c \in \Cc$, to more general fields. 

Let $e \in \ZZ$ and $L_e \in \Pp ic^{e}_{\Cc/S}(S)$. Recall that we constructed a $\Gamma$-equivariant gerbe $\alpha_N \colon \Mm_{\widetilde{\SLn}}^{L_e} \longrightarrow \Mm_{\SLn}^{L_e, \rig}$ in \autoref{pglngerbe}. 

\begin{proposition}[Second $SYZ$-type symmetry] \label{secondSYZ} For any $d \in \ZZ$, there is an isomorphism of $\Pp rym^0$-torsors: 
    $$\Triv^0(\alpha_N^d \ | \ \Pp rym_{\PGLn}^{e'}) \simeq \Pp rym^{N^d}$$
where $e' =  e - \deg D \cdot \frac{n(n-1)}{2}$. 
\end{proposition}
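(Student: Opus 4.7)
First, I will reduce to the case $d=1$. The map $Triv^0(-\mid \Pp rym^{e'}_{\PGLn})$ from Section~\ref{sub torsor of trivial} is a group homomorphism on $n$-torsion Brauer classes, and both sides of the proposition are compatible with $d$-fold Baer sums: indeed $\Pp rym^{N^d}$ is the $d$-fold Baer sum of $\Pp rym^N$ with itself because the Norm map is a group homomorphism. So the case $d=1$ will imply the general one.

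Next, I will translate the left-hand side into a concrete moduli problem. By Lemma~\ref{equiv lemma} applied to the $\Gamma$-cover $T = \Pp rym^{L_e \otimes \Dd}$ of $\Pp rym^{e'}_{\PGLn}$, a trivialisation of $\alpha_N$ over $\Pp rym^{e'}_{\PGLn}$ is a $\Gamma_N$-equivariant Higgs bundle on $T \times_S \Cc$ of determinant $q^*L_e$ with $\GG_m \subset \Gamma_N$ acting by weight~$1$. Via the spectral correspondence and the projection formula, this data translates into a line bundle $\Ll$ on $T \times_S \tilde{\Cc}$ with $\det \pi_* \Ll \simeq q^*L_e$, equipped with isomorphisms $\phi_\gamma \colon \Ll \otimes \tilde q^* \pi^* L_\gamma \xlongrightarrow{\sim} (t_\gamma \times \id)^* \Ll$ satisfying a cocycle governed by the $\psi^N_{\gamma, \gamma'}$ of Section~\ref{gamma action}. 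Passing to the quotient by $\Pp ic^0(T)$ recovers $\Triv^0(\alpha_N \mid \Pp rym^{e'}_{\PGLn})$ as a $\Pp rym^0$-torsor, using $\Pp ic^0(\Pp rym^{e'}_{\PGLn}) \simeq \Pp rym^0$ which follows from the duality between $\Pp rym^0$ and $\Pp rym^0/\Gamma = \Pp rym^0_{\PGLn}$.

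To construct the morphism explicitly, for each $M \in \Pp rym^N$ I set $\Ll_M = \wt h_M^* \Pp_{\tilde{\Cc}}$, where
$$\wt h_M \colon \tilde{\Cc} \times T \longrightarrow \Pp ic^0_{\tilde{\Cc}/S} \times \Pp ic^0(\Pp ic^0_{\tilde{\Cc}/S}), \quad (y, L_{e'}) \longmapsto \Big(\Oo_{\tilde{\Cc}}(y) \otimes M^{-1}, \ \Oo([L_{e'} \otimes M^{-e'}] - [0])\Big).$$
The verification that $\det \pi_* \Ll_M \simeq q^*L_e$ proceeds as in the proof of Proposition~\ref{firstSYZ}, and the assignment $M \mapsto \Ll_M$ is manifestly $\Pp rym^0$-equivariant via the biextension property of $\Pp_{\tilde{\Cc}}$.

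The crux of the argument --- and the main technical obstacle --- is to upgrade $\Ll_M$ to a $\Gamma_N$-equivariant line bundle with the prescribed cocycle. Translating $L_{e'}$ by $\pi^* L_\gamma$ in the second coordinate of $\wt h_M$ leaves the first coordinate fixed, and by the biextension / theorem-of-the-square property of the Poincaré bundle one obtains a canonical isomorphism $(t_\gamma \times \id)^* \Ll_M \simeq \Ll_M \otimes \tilde q^* \pi^* L_\gamma$, providing the candidate $\phi_\gamma$. The resulting $2$-cocycle is then a global scalar on $T \times \tilde{\Cc}$ that must be identified with the pullback along $\pi$ of $\psi^N_{\gamma, \gamma'}$. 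This is exactly where the constraint $\Nm M = N$ enters: after unwinding the normalisations of the Poincaré bundles of $\tilde{\Cc}$ and of $\Cc$, the scalar produced by the Poincaré biextension of $\tilde{\Cc}$ descends along $\pi$ to the scalar determined by the chosen sections $v_\gamma$ of $g_N^* \LL_\gamma$ --- precisely the data defining $\psi^N_{\gamma, \gamma'}$ and $\Gamma_N$. This same identification forces $\GG_m \subset \Gamma_N$ to act by weight~$1$. The resulting $\Pp rym^0$-equivariant map $\Pp rym^N \to \Triv^0(\alpha_N \mid \Pp rym^{e'}_{\PGLn})$ is then automatically an isomorphism of $\Pp rym^0$-torsors.
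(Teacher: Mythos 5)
Your proposal correctly identifies the overall structure of the argument: reduction to $d=1$, the translation of trivialisations into $\Gamma_N$-equivariant universal line bundles via Lemma~\ref{equiv lemma}, the $\Pp rym^0$-torsor structure via $\Pp ic^0_\Gamma(\Pp rym^{L_e'}) \cong \Pp ic^0(\Pp rym^{e'}_{\PGLn}) \cong \Pp rym^0$, and the construction of a candidate universal line bundle by pulling back the Poincar\'e sheaf of $\tilde{\Cc}$ along a map normalised at $M$. You also correctly locate the crux: verifying that the $2$-cocycle of the candidate $\Gamma$-equivariant structure agrees with $\pi^*\psi^N_{\gamma,\gamma'}$, and that this is precisely where $\Nm M = N$ enters.

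The gap is that this verification is asserted rather than proved. You write that ``after unwinding the normalisations of the Poincar\'e bundles of $\tilde{\Cc}$ and of $\Cc$, the scalar produced by the Poincar\'e biextension of $\tilde{\Cc}$ descends along $\pi$ to the scalar determined by the chosen sections $v_\gamma$.'' But your construction never introduces the Poincar\'e bundle of $\Cc$, let alone relates it to that of $\tilde{\Cc}$, so there is nothing to ``unwind.'' The biextension isomorphism of $\Pp_{\tilde{\Cc}}$ tells you what the cocycle is only up to automorphisms of line bundles on the curve; to pin it down as $\pi^*\psi^N_{\gamma,\gamma'}$ one needs a device that makes the $\Cc$-side data visible. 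The paper supplies exactly this: instead of working directly on $\tilde{\Cc} \times T$, it works on $\Pp ic^1_{\tilde{\Cc}/S}\times \Pp ic^{e'}_{\tilde{\Cc}/S}$, uses the presentation $\Pp ic^{e'}_{\tilde{\Cc}/S} \cong (\Pp rym^{L_e'}\times\Pp ic^0_{\Cc/S})/\Gamma$ to lift the candidate to a $\Gamma$-equivariant bundle $\widetilde{\LL}_{L_1}$ on the unfolded space, and then decomposes $\widetilde{\LL}_{L_1}$ as a product involving $((\id\times g_{N'}\circ\Nm)^*\Pp_\Cc)^{-1}$. The normalisation comparison between $\LL_{L_1}$ (normalised at $L_1$) and that $\Pp_\Cc$-factor (normalised at $\Nm^{-1}(N')$) is what forces $N' = \Nm(L_1) = N$, and the $\GG_m$-weight emerges because $\GG_m$ acts trivially on $\widetilde{\LL}_{L_1}$ while it acts with weight $-1$ on the $\Pp_\Cc$-factor. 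Without this lift-and-decompose step, your claim that the cocycle matches --- and your final sentence that ``this same identification forces $\GG_m\subset\Gamma_N$ to act by weight $1$'' --- are promissory notes rather than arguments, and they constitute the entire technical content of the proposition.
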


\begin{proof}
We follow the arguments of Hausel and Thaddeus again. As for Proposition \ref{firstSYZ}, we first reduce to the case where $d=1$. 

Let $L_e' = L_e \otimes \Dd$, it is of degree $e'$. Using Lemma \ref{equiv lemma}, $\Triv^0(\alpha_N \ | \ \Pp rym_{\PGLn}^{e'})$ parametrises $\Gamma_N$-equivariant universal line bundles on $\Pp rym^{L_e'} \times \Tilde{\Cc}$, with the condition that any pullback to $\Pp rym^{L_e'}$ is  of degree $0$. Two such objects differ by a $\Gamma$-equivariant line bundle of degree $0$ on $\Pp rym^{L_e'}$, and $\Pp ic^0_{\Gamma}(\Pp rym^{L_e'}) = \Pp ic^0(\Pp rym^{L_e'}/\Gamma) \cong \Pp rym^0 $. Hence, $\Triv^0(\alpha_N \ | \ \Pp rym_{\PGLn}^{e'})$ is a $\Pp rym^0$-torsor. 

We construct a map $ \Pp rym^{N} \longrightarrow \Triv^0(\alpha_N \ | \ \Pp rym_{\PGLn}^{e'})$. Let $T \longrightarrow S$ and $L_1 \in \Pp rym^{N}(T)$. We can consider 
    $$\begin{tikzcd}
   \Pp ic^1_{\Tilde{\Cc}/S} \times \Pp ic^{e'}_{\Tilde{\Cc}/S} 
 \arrow[r, "g_{L_1}"] & \Pp ic^0_{\Tilde{\Cc}/S} \times \Pp ic^0(\Pp ic^0_{\Tilde{\Cc}/S} ) \\ 
  (M, \Ll)  \arrow[r] & \big( M \otimes L_1^{-1}, \Oo_{\Pp ic^0_{\Tilde{\Cc}/S} }( [\Ll \otimes L_1^{-e'}] - [0]) \big).
\end{tikzcd}$$



Let $\LL_{L_1} = g_{L_1}^*\Pp_{\Tilde{\Cc}}$. Since $L_e' \in \Pp ic^{e'}_{\Cc/S}(S)$, there is an isomorphism of étale group sheaves over $S$,
\begin{align*}
       \frac{\Pp rym^{L_e'} \times \Pp ic^0_{\Cc/S} }{\Gamma}  
        &\longrightarrow \Pp ic^{e'}_{\Tilde{\Cc}/S} \\
        [(A, B)]  &\longmapsto A \otimes \pi^*B^{-1}.
\end{align*}

Thus we can lift $ \LL_{L_1}$ to a $\Gamma$-equivariant line bundle $ \widetilde{\LL}_{L_1}$ over $\Pp rym^{L_e'} \times \Pp ic^0_{\Cc/S} \times \Pp ic^1_{\Tilde{\Cc}/S} $. Then, the universality of $\LL_{L_1}$ implies that \begin{itemize}[-]
    \item The restriction of $\widetilde{\LL}_{L_1}$ to $\Pp rym^{L_e'} \times \{ \Oo_\Cc \} \times \Pp ic^1_{\Tilde{\Cc}/S} $ is a universal line bundle, using $L_1$ again to embed $\Pp rym^{L_e'} \hookrightarrow \Pp ic^0(\Pp ic^1_{\Tilde{\Cc}/S} ) \simeq \Pp ic^0_{\Tilde{\Cc}/S}$. Note that by construction any of its pullback to $\Pp rym^{L_e'}$ is of degree $0$.
    \item The restriction to $\{A\} \times \Pp ic^0_{\Cc/S} \times \Pp ic^1_{\Tilde{\Cc}/S} $ is a universal line bundle twisted by $A$ in the sense that for all $B \in \Pp ic^0_{\Cc/S}$, the restriction to $\{A\} \times \{ B \} \times \Pp ic^1_{\Tilde{\Cc}/S} $ is isomorphic to $A \otimes (\Nm^{*} B)^{-1}$ over $\Pp ic^1_{\Tilde{\Cc}/S}$ (using that $\pi^*$ and $\Nm$ are dual). 
\end{itemize}

Hence, there exists $N' \in \Pp ic^1_{\Cc/S}(S)$ such that 
$$ \widetilde{\LL}_{L_1}  = p_{13}^* {\Ll}_{L_1}' \otimes p_{23}^* ((\id \times  g_{N'} \circ \Nm)^* \Pp_{\Cc})^{-1} $$

where 
\begin{itemize}[.]
    \item ${\Ll}_{L_1}'$ is a universal line bundle on $\Pp rym^{L_e'} \times \Pp ic^1_{\Tilde{\Cc}/S} $,
    \item $\Pp_{\Cc} \longrightarrow \Pp ic^0_{\Cc/S} \times \Pp ic^0_{\Cc/S}$ is the Poincaré sheaf for $\Pp ic^0_{\Cc/S}$,
    \item $g_{N'} \circ \Nm \ : \ \Pp ic^1_{\Tilde{\Cc}/S} \xrightarrow{\Nm} \Pp ic^1_{\Cc/S} \xrightarrow{\cdot N'^{-1}} \Pp ic^0_{\Cc/S}$. 
\end{itemize}

By construction $((\id \times  g_{N'} \circ \Nm)^* \Pp_{\Cc})^{-1}$ is normalised at $Nm^{-1}(N')$. As ${\LL}_{L_1} $ is normalised at $L_1$, in fact $$ N' = Nm(L_1) = N.$$
Recall the relative group scheme 
$$g_{N}^* \coprod_{\gamma  \in \Gamma} \LL_\gamma^* \longrightarrow \Pp ic^1_{\Cc/S}$$
with a canonical identification of the fibre of $N$ with $\GG_m \times \Gamma$. It acts on $(\id \times g_N)^*\Pp_{\Cc}$, hence on $((\id \times  g_{N} \circ \Nm)^* \Pp_{\Cc})^{-1} $, and $\GG_m$ acts with weight $-1$ (due to the inverse). As $\GG_m$ acts trivially on $\widetilde{\LL}_{L_1}$ (it is simply $\Gamma$-equivariant), ${\Ll}_{L_1}'$ is a $ g_{N}^* \underset{\gamma  \in \Gamma}{\coprod}  \LL_\gamma^*$-equivariant universal line bundle on ${\Pp rym^{L_e'} \times \Pp ic^1_{\Tilde{\Cc}/S}} $, where $\GG_m$ acts with weight $1$.
 
Pulling back ${\Ll}_{L_1}'$ over the curve via $\Tilde{\Cc} \xrightarrow{AJ} \Pp ic^1_{\Tilde{\Cc}/S}$, we obtain a $\Gamma_{N}$-equivariant universal line bundle on $\Pp rym^{L_e'} \times \Tilde{\Cc}$, and thus a $\Gamma$-equivariant trivialisation of $\alpha_{N}$ on $\Pp rym^{L_e'}$ by Lemma \ref{equiv lemma}. Hence, $\Pp rym^{N}$ maps to $\Triv^0(\alpha_N \ | \ \Pp rym_{\PGLn}^{e'})$. Both are $\Pp rym^0$-torsors so they are isomorphic. \end{proof}

\section{A non-archimedean topological mirror symmetry} \label{sectionnaTMS}

Let $S = \Spec \Oo_F$, where $\Oo_F$ is the ring of integers of a local field $F$ and $k$ is the residue field. We assume that $k$ is finite with characteristic $p$ which does not divide $n$. We use the relative construction of the moduli spaces of Higgs bundles over $S$ in \autoref{prelim}. 

\subsection{The set-up for integration} \label{measure}

We refer to \cite[Section 4]{GWZ} for an introduction to $p$-adic integration. In \cite{COW}, the authors constructed a $p$-adic measure in the following situation. Let $\Mm$ be a stack over $S$. Assume : 
    \begin{itemize}[-]
      \item  $ \Mm \longrightarrow S $ is normal of dimension $n$, with smooth atlas $s \ : \ V \longrightarrow \Mm $. 
        \item There is an open stabilizer-free substack $U' \subset \Mm$ whose complements has codimension at least $2$. 
         \item There is an open substack $U \subseteq U'$ and a morphism $\pi \ : \ \Mm \longrightarrow M$  where $M$ is a quasi-projective variety and $\pi_{|U}$ is an isomorphism. Its image is denoted $U$ as well.  
        \item For $x \in M(\Oo_F)^\sharp := M(\Oo_F) \cap U(F) $, there exists a finite extension $L/F$ such that there is a lift of $x$ as an $\Oo_L$-point $x_L \ : \ \Spec \ \Oo_L \longrightarrow \Mm $. 
    \end{itemize}

In this situation, there is a measure $\mu_{\can}$ on $M(\Oo_F)^\sharp$ constructed out of local trivialising top forms. Moreover, the measure does not depend on their choices \cite[Proposition 3.1.2]{COW}.

\begin{lemma} \label{lemma measure}
   The $\Oo_F$-variety $M_{\SLn}^L(\Oo_F)^\sharp$ (resp. $M_{\PGLn}^e(\Oo_F)^\sharp$) satisfies the conditions in \cite{COW} to be equipped with a $p$-adic measure $\mu_{\can, \SLn}$ (resp. $\mu_{\can, \PGLn}$), with respect to morphisms $\Mm_{\SLn}^{L,\rig} \longrightarrow M_{\SLn}^L$ (resp. $\Mm_{\PGLn}^{e} \longrightarrow M_{\PGLn}^e$). 
\end{lemma}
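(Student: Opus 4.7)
The plan is to verify the four conditions from \cite{COW} for each stack, first for $\Mm_{\SLn}^{L,\rig} \to M_{\SLn}^L$ and then transferring the result to the $\PGLn$ side via the finite \'etale quotient $\Mm_{\PGLn}^{e} = \Mm_{\SLn}^{L_e,\rig}/\Gamma$.

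For the $\SLn$ side, normality and existence of a smooth atlas follow from \autoref{section symplectic}: when $\deg D > 2g-2$ the stack $\Mm_{\SLn}^L$ is smooth, while when $D \sim K_\Cc$ it carries symplectic, hence rational, hence normal singularities. Rigidification by the central $\mu_n$ preserves both properties. For $U'$ I would take the $\mu_n$-rigidification of the stable locus, on which all automorphism groups become trivial; for $U \subseteq U'$ I would take the stable locus above the smooth Hitchin base $\Aa^{\textnormal{sm}}$, where by \cite{BNR89} the Hitchin fibre is the abelian torsor $\Pp rym^{L\otimes \Dd}$ consisting only of stable objects, so that $\pi|_U$ is an isomorphism onto an open of $M_{\SLn}^L$.

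The main technical input is the codimension-at-least-$2$ bound on the complement of $U'$, i.e.\ on the strictly semistable locus of $\Mm^{L,\rig}_{\SLn}$, which is the step I expect to be the main obstacle. I would establish it by stratifying by Jordan--H\"older types $n = \sum n_i$ with at least two parts and bounding the dimension of each stratum by that of a product of smaller Higgs moduli; the hypothesis $g \geq 2$ together with $\deg D \geq 2g-2$ (or $D \sim K_\Cc$) provides the required room. For the lifting condition, given $x \in M_{\SLn}^L(\Oo_F)^{\sharp}$, its generic fibre lifts canonically to a stable Higgs bundle over $F$ via the isomorphism $\pi|_U$; Langton-type semistable reduction then produces, after a finite extension $F'/F$, a semistable extension of this bundle to $\Oo_{F'}$, hence an $\Oo_{F'}$-point of $\Mm_{\SLn}^{L,\rig}$ lifting $x$.

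For $\Mm_{\PGLn}^e$, all four conditions transfer under the finite \'etale quotient by $\Gamma$. Normality, smooth atlases, and the coarse moduli morphism are preserved by finite group quotients. The stabilizer-free open substack with codimension-$2$ complement is obtained from the $\SLn$ one by further removing the $\Gamma$-fixed loci $M_{\SLn}^{L_e, \gamma}$ for $\gamma \neq 1$; by the fermionic shift computation of \autoref{section symplectic} these have codimension $2 \cdot \textnormal{codim}_{\Aa}(\Aa^\gamma) \geq 2$ in $M_{\SLn}^{L_e}$, so the codimension bound survives the quotient. The lifting property for $M_{\PGLn}^e(\Oo_F)^{\sharp}$ follows by the same Langton-type argument applied to the $\PGLn$-Hitchin fibration.
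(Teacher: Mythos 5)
Your proposal has the right overall structure — normality via Section~\ref{section symplectic}, a stabilizer-free open $U'$, the geometrically stable locus for $U$, and Langton/semistable reduction for the lifting condition — and the transfer to $\PGLn$ through the finite quotient by $\Gamma$ matches the paper's argument, including the observation that the complement of the $\Gamma$-free locus has codimension at least $2$ because each fermionic shift $F_\gamma$ is a positive even integer. Your lifting step via Langton is a valid replacement for the paper's citation of \cite{BLM} and \cite{AHLH}: the $F$-point of $\Mm^{L,\rig}_{\SLn}$ given by $\pi|_U^{-1}$ extends to an $\Oo_{F'}$-point after a finite base change, and separatedness of $M^{L}_{\SLn}$ over the DVR $\Oo_{F'}$ forces this extension to lie over the given $x$.

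The genuine divergence is in the codimension bound for $\Mm^{L,\rig}_{\SLn} \setminus U'$, and you correctly identify it as the main obstacle — but you leave it as a plan. Your route is to take $U'$ to be the rigidified \emph{stable} locus and bound the strictly semistable locus by a Jordan--H\"older stratification $n=\sum n_i$. The paper instead takes $U' = h^{-1}_{\SLn}(\Aa^{\textnormal{red}})$, the preimage of the locus of reduced spectral curves, and simply imports the stratum-by-stratum codimension estimate for $\Aa\setminus\Aa^{\textnormal{red}}$ from \cite{COW} (adapted by replacing the $\GLn$ Hitchin base dimension with the $\SLn$ one), then uses equidimensionality of the Hitchin fibration to transport the bound from $\Aa$ to $\Mm^{L,\rig}_{\SLn}$. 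The two choices of $U'$ are genuinely different open substacks. The paper's route buys you an estimate that is already computed and requires nothing about semistable loci of sub-Higgs-moduli; your route requires a dimension count for the strictly semistable stratum of the stack (not merely the coarse space), where one must carefully treat automorphism group dimensions, and that argument is not supplied here. So as written your proof is incomplete precisely at the step you flagged; once that stratification estimate is written out it should close, but it is real extra work compared to the paper's approach.

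A minor remark: restricting $U$ further to $h^{-1}_{\SLn}(\Aa^{\textnormal{sm}})$ is unnecessary — the geometrically stable locus already makes $\pi$ an isomorphism, and this is what the paper uses.
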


\begin{proof}
 In \cite[Theorem 2.3.1.]{COW}, the authors checked these conditions for $\Mm_{\GLn}^d \longrightarrow M_{\GLn}^d$, and their arguments work for $\SLn$ and $\PGLn$. We summarize it here.
 
\textit{Normality.} 
 As explained in \autoref{section symplectic}, the stack $\Mm_{\SLn}^d \longrightarrow S$ is smooth for $\deg D > 2g - 2$ and normal for $D=K_\Cc$. Rigidification by a finite group preserves smoothness and normality, thus it holds also for $\Mm_{\SLn}^{L,rig} \longrightarrow
 S$. 
 
Taking quotient by a finite abelian group does not affect the cotangent complex nor the atlas of a stack, hence $\Mm_{\PGLn}^e = \Mm_{\SLn}^{L_e,\rig}/\Gamma$  for any $L_e \in \Pp ic^e_{\Cc/S}(S)$ is also smooth (resp. normal) over $S$ for $\deg D > 2g - 2$ (resp. $D=K_\Cc$). 

\textit{Stabilizer free open set of codimension $2$.} We can consider $U' = h_{\SLn}^{-1}(A_{\SLn}^{\textnormal{red}})$, where $A^{\textnormal{red}}$ is the locus of reduced spectral curve. Replacing the dimension of the Hitchin base for $\GLn$ by $$ \dim \ \Aa = \deg \ D \big( \frac{n(n+1)}{2} - 1 \big) + (n-1)(1-g) $$ in the calculation of \cite{COW} leads to the condition 
$$codim \  S_{k, n_1, n_2} \geq 3 \deg D - (2g -2) $$
where $S_{k, n_1, n_2}$ are strata of $\Aa^{\textnormal{red}}$. 
For $D = K$, $g > 1$ or $\deg D > 2g - 2$, $g \geq 1$, the codimension of all strata are greater than $2$, and so is $U'$ by equidimensionality of the Hitchin fibration. 

For $\PGLn$, let $U^{\circ}$ be the open set where $\Gamma$ acts freely. It suffices to consider $ U' \cap U^{\circ}/\Gamma$, who has the same codimension as $U'$.

\textit{Generic isomorphism.}
The open substack $U$ can be chosen to be the open substack of Higgs bundles which are stable over the algebraic closure $\overline{F}$.

\textit{Existence of lifts.}
The last condition follows from \cite[Lemma 21.22]{BLM} or \cite[Theorem A8]{AHLH} (using the GIT construction of $M_{\SLn}^{L}$ ). 
\end{proof}

\subsection{Gerbe functions and \texorpdfstring{$SYZ$}{SYZ}-type duality}  \label{recapsection}
In this section, we convert each $SYZ$-type symmetry in an equality of integrable functions. Let $a \in \Aa(\Oo_F) \cap \Aa^{\textnormal{sm}}(F)$ and $S = \Spec F$. The quotient map $\Phi \ : \ \Pp rym^{0} \longrightarrow \Pp rym^{0}/\Gamma$ is a self-dual isogeny \cite[Proposition 7.10]{GWZ} with kernel $\Gamma$. Hence there is an associated long exact sequence of abelian group : 
$$0 \longrightarrow \Gamma \longrightarrow \Pp rym^{0}(F) \xlongrightarrow{\Phi} \Pp rym^{0}/\Gamma(F) \xlongrightarrow{\beta} H^1(F,\Gamma) \xlongrightarrow{\iota} H^1(F, \Pp rym^{0}) \longrightarrow \dots $$

Recall from \ref{gerbefunction} that for a $n$-torsion $\GG_m$-gerbe $\alpha$ over an abelian variety $A$, the gerbe function $f_{\alpha} \colon A(F) \longrightarrow \CC$ can be expressed in terms of the following Tate pairing,
$$ \langle -,- \rangle_A \ : \  A(F) \times H^1(F, \hat{A}) \longrightarrow \QQ/\ZZ$$
with $f_{\alpha}(x) = \langle -, Triv^0(\alpha \ | \ A ) \rangle_A $

From the first $SYZ$-type symmetry, Proposition \ref{firstSYZ}, since $h^{-1}_{\PGLn}(a) \simeq \Pp rym^{e'}_{\PGLn}$, we have $$ \langle -, Triv^0 (\alpha^{e'} \ | \ \Pp rym^{L \otimes \Dd}) \rangle_{\Pp rym^0} = \langle -, h^{-1}_{\PGLn}(a) \rangle_{\Pp rym^0}  $$ 
on $\Pp rym^0(F)$. 

The second $SYZ$-type symmetry, Proposition \ref{secondSYZ} can also be converted in similar terms. Writing $L \otimes  \Dd = N^{d'} \otimes L_0 $ with $L_0 \in \Pp ic^0(\Cc)$, 
\begin{align} \label{iso pour prymL}
    h^{-1}_{\SLn}(a) &\simeq \Pp rym^{L\otimes \Dd}   \\
    &\simeq (\Pp rym^{N})^{d'} \times^{\Pp rym^0} \Pp rym^{L_0} \nonumber \\
    &\simeq \Triv^0(\alpha_N^{d'} \ | \ \Pp rym_{\PGLn}^{e'})  \times^{\Pp rym^0} \Pp rym^{L_0} \nonumber
\end{align} 

as $\Pp rym^0$-torsors, where we denote by $\times^{\Pp rym^0}$ the product of $\Pp rym^0$-torsor. We used the isomorphism $\Triv^0(\alpha_N^{d'} \ | \ \Pp rym_{\PGLn}^{e'}) \simeq (\Pp rym^{N})^{d'}$ given by the second SYZ symmetry statement (Proposition \ref{secondSYZ}).

Combining three standard long exact sequences, we obtain the following commutative diagram of abelian groups :

\begin{equation} \label{bigdiagram}
    \begin{tikzcd}  
& 1 \arrow[d] & 1 \arrow[d] & 1 \arrow[d] & \dots \arrow[d, "\beta"] \\
  1 \arrow[r]&  \Gamma(F) \arrow[d, "\pi^*"]\arrow[r]& \Pp ic^0_{\Cc/S}(F)\arrow[d, "\pi^*"] \arrow[r, "\cdot n"]&  \Pp ic^0_{\Cc/S}(F)\arrow[d, "\id"]\arrow[r, "\delta"] & H^1(F, \Gamma)  \arrow[d, "\iota"] \\
   1 \arrow[r]& \Pp rym^{0}(F) \arrow[d, "\Phi"]\arrow[r] & \Pp ic^0_{\tilde{\Cc}/S}(F) \arrow[d]\arrow[r, "\Nm"]& \Pp ic^0_{\Cc/S}(F) \arrow[d] \arrow[r, "\Delta"]& H^1(F, \Pp rym^{0} ) \arrow[d] \\
   1 \arrow[r]&  \Pp rym^{0}/\Gamma(F) \arrow[d, "\beta"]\arrow[r, "\id"]& \Pp rym^{0}/\Gamma(F) \arrow[d]\arrow[r]& 1 \arrow[r]& H^1(F, \Pp rym^{0}/\Gamma) \\
   & \dots & \dots & 
\end{tikzcd}
\end{equation}

Since $\Gamma$ is Cartier self-dual, $H^1(F, \Gamma)$ is equipped with a Tate pairing \cite[Corollary I.2.3]{milnearithmdual} : 
$$ \langle - , - \rangle_\Gamma \ : \ H^1(F, \Gamma) \times H^1(F, \Gamma) \longrightarrow \QQ/\ZZ. $$

Moreover, there is an identity for $t \in H^1(F, \Gamma)$ and $y \in \Pp rym^{0}/\Gamma(F)$ \cite[eq. (34)]{GWZ} : 
$$ \langle y, \iota(t) \rangle_{\Pp rym^{0}/\Gamma} = \langle  \beta(y), t \rangle_\Gamma. $$

Since $\Pp rym^{L_0}$ represents $\Delta(L_0)$ and $ \Delta(L_0) = \iota(\delta(L_0))$, 
\begin{equation} \label{equation pairings}
    \langle y, \Pp rym^{L_0} \rangle_{\Pp rym^{0}/\Gamma} = \langle \beta(y), \delta(L_0) \rangle_\Gamma.
\end{equation}

Then, using (\ref{iso pour prymL}), 
\begin{align*}
   \langle y, Triv^0 (\alpha_N^{d'} \ | \ \Pp rym_{\PGLn}^{e'}) \rangle_{\Pp rym^0/\Gamma} + \langle \delta(L_0), \beta(y) \rangle_\Gamma 
   &=  \langle y, \Pp rym^{N^{d'}} \rangle_{\Pp rym^0/\Gamma}  + \langle y, \Pp rym^{L_0} \rangle_{\Pp rym^0/\Gamma} \\
    &=\langle y, \Pp rym^{N^{d'}} \times^{\Pp rym^0} \Pp rym^{L_0} \rangle_{\Pp rym^0/\Gamma} \\
    &= \langle y, h^{-1}_{\SLn}(a) \rangle_{\Pp rym^0/\Gamma}. 
\end{align*} 

We just proved the following lemma.

\begin{lemma} \label{recaplemma}
Let $S = \Spec F$. Let $d, e \in \ZZ$ and $L \in \Pp ic^d_{\Cc/S}(S)$. Consider the Hitchin maps $h_{\SLn} \colon M_{\SLn}^L \longrightarrow \Aa$ and $h_{\PGLn} \colon M_{\PGLn}^e \longrightarrow \Aa$. Let $a \in \Aa^{\textnormal{sm}}(F) \cap \Aa(\Oo_F)$. 
\begin{enumerate}
    \item Let $\alpha$ denote the gerbe ${\Mm^{L}_{\widetilde{\SLn}} \longrightarrow \Mm^{L,\rig}_{\SLn}}$. For $x \in \Pp rym^0(S)$, 
$$\langle x, h^{-1}_{\PGLn}(a) \rangle_{\Pp rym^0} = \langle x, Triv^0 (\alpha^{e'} \ | \ h^{-1}_{\SLn}(a))\rangle_{\Pp rym^0}.  $$ 
    
    \item Let $N \in \Pp ic^1_{\Cc/S}(S)$ and $L_e \in \Pp ic^e_{\Cc/S}(S)$. Let $\alpha_N$ denote the $\Gamma$-equivariant gerbe ${\Mm^{L_e}_{\widetilde{\SLn}} \longrightarrow \Mm^{e}_{\PGLn}}$ from (\ref{pgln gerbe eq}). Set ${L_0 = L \otimes \Dd \otimes N^{-d'} \in \Pp ic^0_{\Cc/S}(S)} $. For $y \in \Pp rym^0/\Gamma(S)$, 
$$ \langle y, h^{-1}_{\SLn}(a) \rangle_{\Pp rym^0/\Gamma} = \langle y, Triv^0 (\alpha_N^{d'} \ | \ h^{-1}_{\PGLn}(a)) \rangle_{\Pp rym^0/\Gamma} + \langle \beta(y), \delta(L_0)  \rangle_\Gamma .$$     
\end{enumerate}
\end{lemma}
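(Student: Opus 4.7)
The plan is to translate the two $SYZ$-type torsor isomorphisms from Propositions \ref{firstSYZ} and \ref{secondSYZ} into pairing identities via the description of gerbe functions recalled in Section \ref{gerbefunction}. The key general principle is that for a torsor $T$ under an abelian variety $A$ over $F$, the Tate pairing $\langle \cdot , T \rangle_A$ only sees the class $[T] \in H^1(F, \hat A)$; an isomorphism of torsors therefore gives an equality of pairings against any element of $A(F)$, and a product of torsors corresponds to a sum of classes in $H^1$.

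For part (1), the first $SYZ$-type symmetry provides an isomorphism of $\Pp rym^0/\Gamma$-torsors $\Triv^0(\alpha^{e'} \ | \ h^{-1}_{\SLn}(a)) \simeq h^{-1}_{\PGLn}(a)$. Since $\Pp rym^0/\Gamma = \widehat{\Pp rym^0}$ via the self-dual isogeny $\Phi$, the two torsors define the same class in $H^1(F, \widehat{\Pp rym^0})$, and pairing with $x \in \Pp rym^0(F)$ immediately gives the stated identity.

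For part (2), I would start from the spectral correspondence $h^{-1}_{\SLn}(a) \simeq \Pp rym^{L \otimes \Dd}$ and the decomposition $L \otimes \Dd \simeq N^{d'} \otimes L_0$ with $L_0 \in \Pp ic^0_{\Cc/S}(S)$. Because the norm map is a group morphism, the multiplication of line bundles yields an isomorphism of $\Pp rym^0$-torsors
$$h^{-1}_{\SLn}(a) \simeq \Pp rym^{N^{d'}} \times^{\Pp rym^0} \Pp rym^{L_0}.$$
The second $SYZ$-type symmetry identifies the first factor with $\Triv^0(\alpha_N^{d'} \ | \ h^{-1}_{\PGLn}(a))$. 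Passing to classes in $H^1(F, \Pp rym^0)$ turns the product of torsors into a sum, so bilinearity of the Tate pairing with $y \in \Pp rym^0/\Gamma(F)$ yields
$$\langle y, h^{-1}_{\SLn}(a) \rangle_{\Pp rym^0/\Gamma} = \langle y, \Triv^0(\alpha_N^{d'} \ | \ h^{-1}_{\PGLn}(a)) \rangle_{\Pp rym^0/\Gamma} + \langle y, \Pp rym^{L_0} \rangle_{\Pp rym^0/\Gamma}.$$

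It then remains to rewrite the last term as $\langle \beta(y), \delta(L_0) \rangle_\Gamma$. Using the commutative diagram \eqref{bigdiagram}, the class of $\Pp rym^{L_0}$ in $H^1(F, \Pp rym^0)$ equals $\Delta(L_0) = \iota(\delta(L_0))$, where $\delta$ is the connecting map from the Kummer-type sequence $1 \to \Gamma \to \Pp ic^0_{\Cc/S} \xrightarrow{n} \Pp ic^0_{\Cc/S} \to 1$ and $\iota$ is induced by $\Gamma \hookrightarrow \Pp rym^0$. Cartier self-duality of $\Gamma$, combined with self-duality of the isogeny $\Phi \colon \Pp rym^0 \to \Pp rym^0/\Gamma$, then gives the compatibility $\langle y, \iota(t) \rangle_{\Pp rym^0/\Gamma} = \langle \beta(y), t \rangle_\Gamma$ for $t \in H^1(F, \Gamma)$, which applied to $t = \delta(L_0)$ concludes the argument. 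The main subtlety I expect is this last compatibility between the two Tate pairings and the maps $\iota$, $\beta$ induced by $\Phi$; once granted, the rest of the proof is genuine bookkeeping against the two $SYZ$-type symmetries.
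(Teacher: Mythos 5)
Your proposal follows essentially the same route as the paper: both parts are derived by converting the $SYZ$-type torsor isomorphisms (Propositions \ref{firstSYZ} and \ref{secondSYZ}) into equalities of $H^1$-classes and hence of Tate pairings, using the decomposition $L\otimes\Dd \simeq N^{d'}\otimes L_0$, the product-of-torsors structure in \eqref{iso pour prymL}, and the compatibility $\langle y,\iota(t)\rangle_{\Pp rym^0/\Gamma} = \langle\beta(y),t\rangle_\Gamma$ from the diagram \eqref{bigdiagram}. The argument is correct and matches the paper's reasoning in Section~\ref{recapsection}.
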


As explained in \ref{gerbefunction}, gerbe functions are intrinsic analogues to $\langle - , Triv^0 \rangle $, meaning that gerbe functions are defined without respect to a choice of trivialisation $h_{\SLn}^{-1}(a) \xrightarrow{\sim} \Pp rym^0$ and $h_{\PGLn}^{-1}(a) \xrightarrow{\sim} \Pp rym^0/\Gamma$. 

Since the function $\beta \colon \Pp rym^0/\Gamma(F) \longrightarrow H^1(F, \Gamma)$ is defined on the abelian variety $\Pp rym^0/\Gamma(F)$, and not directly on the Hitchin fibre, we need an intrinsic analogue to $\langle \beta(-), \delta(L_0)  \rangle_\Gamma $. 

Let $e \in \ZZ$ and $L_e \in \Pp ic^e_{\Cc/S}(S)$. There is a $\Gamma$-quotient map 
$$ M^{L_e}_{\SLn} \longrightarrow M^{e}_{\PGLn}$$
which is a free quotient above the chosen open set $U \subset M^{e}_{\PGLn}(F)$ in Lemma \ref{lemma measure} defining $M_{\PGLn}^e(\Oo_F)^\sharp$. Hence, for $y \colon \Spec \Oo_F \longrightarrow M_{\PGLn}^e$ in $M_{\PGLn}^e(\Oo_F)^\sharp$, there is an induced $\Gamma$-torsor $T_y = \Spec F \times_{M^{e}_{\PGLn}} M^{L_e}_{\SLn} $  over $\Spec F$. 

\begin{definition}[The function $\beta_{L_e}$] \label{def beta Le}
    Let $e \in \ZZ$ and $L_e \in \Pp ic^e(\Cc)$. We define 
    $$ \beta_{L_e} \colon M_{\PGLn}^e(\Oo_F)^\sharp \longrightarrow H^1(F, \Gamma),  $$
    sending $y \colon \Spec \Oo_F \longrightarrow M_{\PGLn}^e$ to $[T_y]$. 
\end{definition}

For a different choice of line bundle $L_e' \in \Pp ic^e_{\Cc/S}(S)$, 
the function $\beta_{L_e'}$ differs from $\beta_{L_e}$ by multiplication by $\delta(L_e^{-1} L_e') \in H^1(F, \Gamma)$. 

A trivialisation of the $\PGLn$-Hitchin fibre is given by $x \in \Pp rym^{e'}_{\PGLn}(F)$ and denoted $h_x$. Such $x$ lifts to $\tilde{x} \in \Pp rym^{L_x}_{\SLn}$ for a certain line bundle $L_x$ of degree $e$. Then, $\beta \circ h_x$ agrees with $\beta_{L_x}$, both sending $x$ to $[0] \in H^1(F, \Gamma)$.

In turn, for $x \in \Pp rym^{e'}_{\PGLn}(F)$,
   \begin{align} \label{eqbeta}
       \exp (2i\pi \cdot \langle \beta_{L_e}(x), \delta(L_0) \rangle_{\Gamma}) 
       &=  c \cdot \exp (2i\pi \cdot \langle \beta(h(x)), \delta(L_0) \rangle_{\Gamma}) \\
       &=c \cdot \exp (2i\pi \cdot\langle h(x), \Pp rym^{L_0} \rangle_{\Pp rym^0}) \nonumber
   \end{align}
where $c$ is a constant depending on the choice of trivialisation as in \cite[Lemma 6.7]{GWZ}. The last equality is given by (\ref{equation pairings}). 
Note that both sides of (\ref{eqbeta}) are constantly $1$ when $L_0$ has a $\mathit{n}$th root, since $\delta(L_0) = [0]$.



\subsection{The non-archimedean topological mirror symmetry}
Before stating the main theorem, we recall the main objects that come into play. We fix a rank $n \in \NN$. We work over a base scheme $S = \Spec \Oo_F$, where $\Oo_F$ is the ring of integers of a local field $F$ of finite residue field $k_F$, chosen so that $\charac k_F \nmid n$ and $\mu_{n, S}$ is constant. We fix a relative curve $\Cc \longrightarrow S$ of genus $g$ and a relative divisor $D$ such that $\deg D > 2g - 2$ or $D=K_\Cc$. We denote $\Gamma = \Pp ic^0_{\Cc/S}[n]$, which we assume to be constant.

For the $\SLn$ side, we fix a degree $d \in \ZZ $ and a line bundle $L \in \Pp ic^d_{\Cc/S}(S)$. We consider the moduli space $M_{\SLn}^L$ of semi-stable Higgs bundles of determinant $L$ and traceless Higgs field, relative over $S$. We explained in \autoref{slngerbe} that it is the good moduli space of a stack $\Mm^{L,\rig}_{\SLn}$ endowed with a $n$-torsion $\GG_m$-gerbe $\alpha \colon \Mm^{L}_{\widetilde{\SLn}} \longrightarrow \Mm^{L,\rig}_{\SLn}$.

For the $\PGLn$ side, we fix a degree $e \in \ZZ$ and line bundles $L_e \in \Pp ic^e_{\Cc/S}(S)$ and $N \in \Pp ic^1_{\Cc/S}(S)$. The space $M^e_{\PGLn}$ is the good moduli space of a stack $\Mm^e_{\PGLn}$ which admits a $n$-torsion $\GG_m$-gerbe $\alpha_N \colon \Mm^{L_e}_{\widetilde{\SLn}} \longrightarrow \Mm^{L_e,\rig}_{\SLn}/\Gamma \simeq \Mm^e_{\PGLn}$. It depends on $N$, see \autoref{pglngerbe}. We have introduced complex valued functions associated to gerbes in \ref{gerbefunction}. 

We denote $\Dd = \overset{n-1}{\underset{i=0}{\bigotimes}} \Oo_\Cc(-iD)$, $d' = d + \deg \Dd$, $e' = e + \deg \Dd$ and ${L_0 = L \otimes \Dd \otimes N^{-d'} \in \Pp ic^0_{\Cc/S}(S)}$. Then $\delta(L_0) \in H^1(F, \Gamma)$ measure its class in $\Pp ic^0_{\Cc/S}(\Oo_F)/(\Pp ic^0_{\Cc/S}(\Oo_F))^{\times n}$ (see  \autoref{recapsection}). We also introduced a function $\beta_{L_e} \colon M_{\PGLn}^e(\Oo_F)^\sharp \longrightarrow H^1(F, \Gamma)$ in Definition \ref{def beta Le}. 

Finally, for a good moduli space map $\Mm \longrightarrow M$ which is an isomorphism when restricted to an open set $U$, we write
$$ M(\Oo_F)^\sharp := M(\Oo_F) \cap U(F), $$
and by Lemma \ref{lemma measure} there are canonical $p$-adic measures $\mu_{\can, \SLn}$ and $\mu_{\can, \PGLn}$ on $M_{\SLn}^L(\Oo_F)^\sharp$ and $M_{\PGLn}^e(\Oo_F)^\sharp$.

\begin{theorem}[Non-archimedean topological mirror symmetry] \label{mainthm}
With the above notations, the following equality holds : 
     $$ \int_{M_{\SLn}^L(\Oo_F)^\sharp} f_{\alpha}^{e'} \ \mu_{\can, \SLn} = \int_{M_{\PGLn}^e(\Oo_F)^\sharp} f_{\alpha_{N}}^{d'} \cdot exp(2i\pi \cdot \langle \beta_{L_e}(-), \delta(L_0)\rangle_\Gamma )  \ \mu_{\can, \PGLn}. $$
\end{theorem}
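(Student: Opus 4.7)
The strategy follows closely that of \cite{GWZ}: use the Hitchin fibration on each side to reduce the equality of $p$-adic integrals to a fiber-wise identity over $\Aa^{\sm}(\Oo_F)$, which in turn follows from the $SYZ$-type dualities of \autoref{section SYZ} combined with Tate duality on the dual pair $(\Pp rym^0, \Pp rym^0/\Gamma)$.

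\textbf{Step 1 (Reduction to the smooth locus and Fubini).} First I would show that the complement of $h_{\SLn}^{-1}(\Aa^{\sm}(\Oo_F))$ in $M_{\SLn}^L(\Oo_F)^\sharp$, and the analogous set for $M_{\PGLn}^e$, have $\mu_{\can}$-measure zero. Since $\Aa \setminus \Aa^{\sm}$ has positive codimension in $\Aa$ and the Hitchin fibration is flat, this is a standard $p$-adic volume estimate on the preimage of a subvariety of positive codimension, as in \cite{GWZ}. A Fubini argument along the (proper) Hitchin maps then reduces the identity to a pointwise equality of fiber integrals $I_{\SLn}(a) = I_{\PGLn}(a)$ for almost every $a \in \Aa^{\sm}(\Oo_F)$, once the top forms on $M_{\SLn}^L$ and $M_{\PGLn}^e$ are normalized compatibly via the common pullback from $\Aa$.

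\textbf{Step 2 (Fiber-wise translation into Tate characters).} For such $a$, the BNR correspondence identifies $h_{\SLn}^{-1}(a)$ and $h_{\PGLn}^{-1}(a)$ with torsors under $\Pp rym^0$ and $\Pp rym^0/\Gamma$ respectively, and the canonical measure restricts to a Haar measure on each, as in \cite[Lem.~6.5]{GWZ}. Picking $F$-points to trivialise both fibers, the gerbe function $f_\alpha^{e'}$ on $h_{\SLn}^{-1}(a)$ becomes, up to an $n$-th root of unity $c_{\SLn}(a)$, the Tate character $x \mapsto \exp\!\bigl(2\pi i \langle x, \Triv^0(\alpha^{e'} \,|\, h_{\SLn}^{-1}(a))\rangle_{\Pp rym^0}\bigr)$ by the description of \autoref{gerbefunction}. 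Similarly, by equation (\ref{eqbeta}), $f_{\alpha_N}^{d'} \cdot \exp(2\pi i\,\langle \beta_{L_e}(-), \delta(L_0)\rangle_\Gamma)$ becomes, up to a constant $c_{\PGLn}(a)$, the Tate character against $\Triv^0(\alpha_N^{d'} \,|\, h_{\PGLn}^{-1}(a)) + [\Pp rym^{L_0}]$. The crucial input is then \autoref{recaplemma}, which via the two $SYZ$-type dualities rewrites each of these classes as the \emph{opposite} Hitchin fiber: the former as $[h_{\PGLn}^{-1}(a)] \in H^1(F, \Pp rym^0/\Gamma)$, the latter as $[h_{\SLn}^{-1}(a)] \in H^1(F, \Pp rym^0)$.

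\textbf{Step 3 (Tate orthogonality).} I would conclude via the local duality argument of \cite[Prop.~6.9]{GWZ}: for an abelian variety $A/F$ with Néron model over $\Oo_F$, the integral
\[
\int_{A(\Oo_F)} \exp\!\bigl(2\pi i \langle x, t\rangle_A\bigr)\, dx
\]
vanishes unless $t \in H^1(F, \hat A)$ is unramified, in which case it equals $\operatorname{vol}(A(\Oo_F))$. Applying this on each smooth Hitchin fiber, both $I_{\SLn}(a)$ and $I_{\PGLn}(a)$ equal $\operatorname{vol}(\Pp rym^0(\Oo_F))$ times the indicator that the opposite Hitchin fiber torsor class is unramified. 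Since $[h_{\SLn}^{-1}(a)]$ and $[h_{\PGLn}^{-1}(a)]$ correspond to each other under the self-dual isogeny $\Pp rym^0 \to \Pp rym^0/\Gamma$ (cf.~(\ref{bigdiagram})), and this isogeny extends to Néron models above $\Aa^{\sm}(\Oo_F)$, the two unramifiedness conditions coincide. This yields $I_{\SLn}(a) = I_{\PGLn}(a)$ and hence the theorem after integration over $a$.

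\textbf{Main obstacles.} The most delicate point is the bookkeeping of the $n$-th root of unity constants $c_{\SLn}(a)$ and $c_{\PGLn}(a)$ produced by the choice of trivialisations of the Hitchin fibres: their cancellation relies on the fact that the two $SYZ$-type isomorphisms of Propositions \ref{firstSYZ} and \ref{secondSYZ} are compatible with the self-duality between $\Pp rym^0$ and $\Pp rym^0/\Gamma$, and this compatibility must be traced through the diagram (\ref{bigdiagram}). A secondary technical issue is the measure-zero claim outside $\Aa^{\sm}$: in the non-coprime range the moduli spaces are only normal (when $D = K_\Cc$), and one must invoke \cite{COW} to ensure that $\mu_{\can,\SLn}$ and $\mu_{\can,\PGLn}$ are well-defined across the semistable locus and compatible with the Hitchin fibration above $\Aa^{\sm}(\Oo_F)$, so that the Fubini decomposition of Step~1 is legitimate.
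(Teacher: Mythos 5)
Your Steps 1 and 2 track the paper's argument. The gap is in Step 3, and it is genuine. You assert that both $I_{\SLn}(a)$ and $I_{\PGLn}(a)$ equal $\operatorname{vol}(\Pp rym^0(\Oo_F))$ times the indicator that the \emph{opposite} Hitchin-fiber torsor class is unramified, and that the two indicators coincide under the self-dual isogeny. Neither step is correct. Your formula for $I_{\SLn}(a)$ ignores that the domain of integration is $h^{-1}_{\SLn}(a)(F)$ itself, which may be empty: in that case $I_{\SLn}(a)=0$ regardless of whether $[h^{-1}_{\PGLn}(a)]$ vanishes. And the pushforward $\Phi_*\colon H^1(F,\Pp rym^0)\to H^1(F,\Pp rym^0/\Gamma)$ is not injective -- its kernel contains the image of $H^1(F,\Gamma)$, see diagram (\ref{bigdiagram}) -- so the two vanishing conditions are genuinely inequivalent: one routinely has $h^{-1}_{\SLn}(a)(F)=\emptyset$ while $h^{-1}_{\PGLn}(a)(F)\neq\emptyset$. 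In precisely that situation your formula predicts $I_{\SLn}(a)\neq 0$, which is false.

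The paper handles this with a case analysis on which of the two fibers carries an $F$-point. When both do, both defining torsor classes are zero, both Tate characters are identically trivial, and the trivialisation constants are trivial as well because one can trivialise at an $\Oo_F$-point, over which the pulled-back gerbe has vanishing Brauer invariant; the fiberwise equality then reduces to $\operatorname{vol}(\Pp rym^0(F))=\operatorname{vol}(\Pp rym^0/\Gamma(F))$, which follows from the self-dual isogeny as in \cite[Lemma 6.15]{GWZ}. When $h^{-1}_{\SLn}(a)(F)=\emptyset$ and $h^{-1}_{\PGLn}(a)(F)\neq\emptyset$, the $\SLn$-side is an integral over the empty set, and the $\PGLn$-side vanishes because its integrand is a \emph{non-trivial} character on $\Pp rym^0/\Gamma(F)$: the class $[h^{-1}_{\SLn}(a)]\in H^1(F,\Pp rym^0)$ is now nonzero and Tate duality with $\Pp rym^0/\Gamma(F)$ is perfect. (The reverse scenario cannot occur, since an $F$-point of the $\SLn$-fiber pushes forward to one of the $\PGLn$-fiber.) Your concern about bookkeeping the trivialisation constants is largely moot: they only arise in the empty-fiber case, where the whole integral vanishes anyway.
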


\begin{remark}
Degrees $e$ and $e'$ on the $\PGLn$ side are only well-defined modulo $n$ but the gerbe function $f_\alpha$ is $n$-torsion (see \autoref{slngerbe}) so that $f_{\alpha}^{e'}$ is well-defined.   
\end{remark}

\begin{proof}
    We use the same proof strategy as in \cite[Theorem 6.12]{GWZ} and \cite[Thm 5.0.2]{COW}. 

Let $\Aa^{\textnormal{red}}_S$ denote the locus of reduced spectral curves. Let $\Pp rym^0_{\SLn} \longrightarrow \Aa^{\textnormal{red}}_S $ and $\Pp rym^0_{\PGLn} \longrightarrow \Aa_S^{\textnormal{red}}$ be the relative group schemes acting faithfully and transitively on $ \Mm_{\SLn}' = M_{\SLn}^L \times_\Aa \Aa^{\textnormal{red}}$ and $\Mm_{\PGLn}' = M_{\PGLn}^e \times_\Aa \Aa^{\textnormal{red}}$ \cite[Proposition 4.3.3]{ngo}. 
 
    There exists a global, translation-invariant trivialising volume form $\omega_{\Pp, \PGLn}$ for $ \Omega^{top}_{\Pp rym^0_{\PGLn}/\Aa^{\textnormal{red}}}$. Using the isogeny 
    $$ \Phi \colon \Pp rym^0_{\SLn} \longrightarrow \Pp rym^0_{\PGLn} $$
    the pullback $\omega_{\Pp, \SLn} = \Phi^* \omega_{\Pp, \PGLn}$ gives a global, translation-invariant, trivialising relative volume form for $\Pp rym^0_{\SLn} \longrightarrow \Aa_S^{\textnormal{red}}$.

    Using \cite[Lemma 6.13]{GWZ}, those relative forms induce trivialising relative forms for the torsors $\Mm_{\SLn}'$ and $\Mm_{\PGLn}'$, denoted $\omega_{h, \SLn}$ and $\omega_{h, \PGLn}$. 
    
    There are isomorphisms of line bundles :
    $$ \Omega^{top}_{\Mm_{\SLn}'/S} \simeq \Omega^{top}_{\Mm_{\SLn}'/\Aa^{\textnormal{red}}} \otimes \Omega^{top}_{\Aa^{\textnormal{red}}/S} $$
    and 
    $$ \Omega^{top}_{\Mm_{\PGLn}'/S} \simeq \Omega^{top}_{\Mm_{\PGLn}'/\Aa^{\textnormal{red}}} \otimes \Omega^{top}_{\Aa^{\textnormal{red}}/S} .$$
    
    Moreover, $\Aa$ being an affine space, there exists a global volume form $\omega_{\Aa}$ on $\Aa$. We obtain global volume forms on $\Mm_{\SLn}'$ and $\Mm_{\PGLn}'$, 
    $$ \omega_{\SLn}' = h_{\SLn}^* \omega_\Aa \wedge \omega_{h, \SLn} $$
    $$ \omega_{\PGLn}' = h_{\PGLn}^* \omega_\Aa \wedge \omega_{h, \PGLn}. $$

    Since $\Mm_{\SLn}^L \setminus \Mm_{\SLn}'$ is of codimension at least $2$, by Hartog's extension theorem $\omega_{\SLn}'$ extends to a global trivialising form $\omega_{\SLn}$ on $\Mm_{\SLn}^L(\Oo_F)^{\sharp}$. The same apply to construct $\omega_{\PGLn}$. Moreover, $\Mm_{\PGLn}' \subset U' \cap U^{\circ}/\Gamma$ from Lemma \ref{measure} since $\Mm_{\PGLn}'$ is represented. Hence, $\mu_{\can, \SLn}$ (resp. $\mu_{\can, \PGLn}$) is given by integrating global volume forms $\omega_{\SLn}$ (resp. $\omega_{\PGLn}$) on $M_{\SLn}^L(\Oo_F)^\sharp$ (resp. $M_{\PGLn}^e(\Oo_F)^\sharp$).

    Then, the volume of $ M_{\SLn}^L \times_\Aa (\Aa \setminus \Aa^{\textnormal{sm}})$ (resp. $M_{\PGLn}^e \times_\Aa ((\Aa \setminus \Aa^{\textnormal{sm}})$) is zero, since it is a closed subscheme of positive codimension \cite[Proposition 4.4]{GWZ}. Since the Hitchin map is proper over $\Aa$ and using the Fubini theorem \cite[Proposition 4.1]{GWZ}, we reduce to the fibrewise equality 
    $$ \int_{h_{\SLn}^{-1}(a)(F)} f_{\alpha}^{e'} \ |\omega_{\SLn, a} | = \int_{h_{\PGLn}^{-1}(a)(F)} f_{\alpha_{N}}^{d'} \cdot exp(2i\pi \cdot \langle \beta_{L_e}(-), \delta(L_0)\rangle_\Gamma )  \ |\omega_{\PGLn, a} |$$
    for all $a \in \Aa(\Oo_F) \cap \Aa^{\textnormal{sm}}(F)$, where $\omega_{\SLn, a}$ (resp. $\omega_{\PGLn, a}$) is the restriction of $\omega_{\SLn}$ (resp.  $\omega_{\PGLn}$) to the fibre over $a$. 
    
    Pick $a \in \Aa(\Oo_F) \cap \Aa^{\textnormal{sm}}(F)$. There is an associated smooth spectral curve $\Tilde{\Cc}$, for which we use the notation $\Pp rym^0$, $\Pp rym^0/\Gamma$ as introduced in \autoref{pryms}. 
    

    Then, there are four possible cases. 

    \begin{itemize}[-]
        \item If both fibres are empty, both sides are zero. 
        \item If both fibres have a rational point, then they are trivial $\Pp rym^0$-torsor and  $\Pp rym^0/\Gamma$-torsor, so we can pick two trivialisations. By Lemma \ref{recaplemma} and passing to the exponential, the integrands can be expressed in terms of Tate pairing against the dual fibre, $\exp^{2 i \pi \cdot \langle -, h^{-1}_{\SLn}(a)(F) \rangle} $ (resp. $\exp^{2 i \pi \cdot \langle -, h^{-1}_{\SLn}(a)(F) \rangle} $), so they are identically $\id$.

        It suffices to prove that the $p$-adic volume of $\Pp rym^0$ and $\Pp rym^0/\Gamma$ is the same. It is the content of \cite[Lemma 6.15]{GWZ}. The proof relies on the Fubini theorem \cite[Proposition 4.4]{GWZ}, the construction of the translation-invariant form $\omega_{\SLn}$ by pullback along $\Phi$ and the fact that the quotient map $\Phi \ : \ \Pp rym^0 \longrightarrow \Pp rym^0/\Gamma$ is a self-dual isogeny, so that \cite[Proposition 3.16]{GWZ} holds : 
        $$\bigg|  \frac{\Pp rym^0/\Gamma(F)}{\Phi(\Pp rym^0)(F)} \bigg| = \mid \ker \Phi (F) \mid.   $$

        \item If $h_{\SLn}^{-1}(a)(F) = \emptyset$, and $h_{\PGLn}^{-1}(a)(F) \neq \emptyset$, we pick a trivialisation $h \ : \ h_{\PGLn}^{-1}(a) \xrightarrow{\sim} \Pp rym^0/\Gamma$. Using Lemma \ref{recaplemma}, the integrand on the right is $\exp^{2 i \pi \cdot \langle -, h_{\SLn}^{-1}(a) \rangle}$ up to a constant due to the choice of trivialisation. Since $h_{\SLn}^{-1}(a)(F)$ is empty, it is a non-trivial $\Pp rym^0$-torsor. The integrand is then a non-trivial character of $ \Pp rym^0/\Gamma$, so the integral is $0$.  

        \item If $h_{\SLn}^{-1}(a)(F) \neq \emptyset$, and $h_{\PGLn}^{-1}(a)(F) = \emptyset$, we use a symmetric argument.     
    \end{itemize}
\end{proof}

\subsection{A refinement}





Recall the map $\beta_{L_e}$ from Definition \ref{def beta Le} : we fix $e \in \ZZ$ and $L_e \in \Pp ic^e_{\Cc/S}(S)$, then
    $$ \beta_{L_e} \colon M_{\PGLn}^e(\Oo_F)^\sharp \longrightarrow H^1(F, \Gamma),  $$
    $$ y \longmapsto [T_y]$$
    
    where $T_y = \Spec F \times_{M^{e}_{\PGLn}} M^{L_e}_{\SLn} $ is the induced $\Gamma$-torsor over $\Spec F$. 

As explained in \cite[Proposition 3.6]{GWZ}, for a choice of primitive $\mathit{n}$th root of unity, there is a decomposition corresponding to ramified and unramified part $$H^1(F, \Gamma) \simeq H^1_{\textnormal{ur}}(F, \Gamma) \oplus \Gamma \simeq \Gamma \oplus \Gamma$$
where $H^1_{\textnormal{ur}}(F, \Gamma) = H^1_{\textnormal{ét}}(k_F, \Gamma)$ is the group of torsors which lifts to $\Spec \Oo_F$. 

The lift of $y_{\Oo_F}$ to $\Mm^e_{\PGLn}$ has a unique closed point. Let $\gamma$ be the generator of its automorphism group. Then, $\gamma$ is the projection of $[T_y]$ to the ramified part. The construction is detailed in \cite[Construction 4.15]{GWZ} and \cite[Construction 3.1.2]{LW}. Composing $\beta_{L_e}$ with this second projection $H^1(F, \Gamma) \longrightarrow \Gamma$, we obtain a specialisation map :
$$ M_{\PGLn}^e(\Oo_F)^\sharp \xlongrightarrow{s} \Gamma. $$

It does not depend on $L_e$, since a different choice gives a multiplication by $\delta(L_e^{-1} L_e') \in H^1_{\textnormal{ur}}(F, \Gamma)$, which does not affect the ramified part.

\begin{remark}[Weil pairing] \label{rmk weil pairing}
  The group $\Gamma$ is a self-dual abelian variety as kernel of the self-dual isogeny $[n]$. Thus, there is a Weil pairing $(- , - )_\Gamma \colon \Gamma \times \Gamma \longrightarrow \mu_n(\CC)$. It provides a canonical identification $\omega \ : \ \Gamma \longrightarrow \hat{\Gamma}$. For $\gamma \in \Gamma$, we denote $\kappa = \omega(\gamma)$ the corresponding character.
\end{remark}

 We fix an identification $H^1_{\textnormal{ur}}(F, \Gamma) \simeq  \Gamma$, which is given by a choice of uniformiser of $F$ \cite[Construction 3.5]{GWZ}. Recall that $\delta$ denotes the connecting morphism $\delta \colon \Pp ic^0(\Oo_F) \longrightarrow H^1_{\textnormal{ur}}(F, \Gamma)$ in (\ref{bigdiagram}). 

\begin{theorem}[Refinement]\label{fourier}
  Let $d, e \in \ZZ$. Let $L = N^{\otimes d}$ and for $\nu \in \Gamma$, let $L_{\nu} \in \Pp ic^0(\Cc)(\Oo_F)$ such that $\delta(L_\nu) = \nu$. Then, for $\kappa \in \hat{\Gamma}$ and $\omega (\gamma) = \kappa$, 
  $$ \frac{1}{ |\Gamma|} \sum_{\nu \in \Gamma} \Big( \int_{M_{\SLn}^{N^d \otimes L_\nu} (\Oo_F)^\sharp} f_{\alpha}^{e'} \ \mu_{\can} \Big) \kappa(\nu) = \int_{M_{\PGLn}^e(\Oo_F)^\sharp \cap s^{-1}(\gamma)}  f_{\alpha_{N}}^{d'}  \ \mu_{\can}.  $$
\end{theorem}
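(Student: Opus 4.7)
The plan is to apply Theorem \ref{mainthm} separately to each determinant $L = N^d \otimes L_\nu$ and then invert the finite Fourier transform on $\Gamma$ in order to isolate the fibre $s^{-1}(\gamma)$ of the specialisation map.

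First, for each $\nu \in \Gamma$, applying Theorem \ref{mainthm} with the $\SLn$-determinant $N^d \otimes L_\nu$ yields
\[
\int_{M_{\SLn}^{N^d\otimes L_\nu}(\Oo_F)^\sharp} f_\alpha^{e'} \, \mu_{\can} = \int_{M_{\PGLn}^e(\Oo_F)^\sharp} f_{\alpha_N}^{d'}(y) \cdot \exp\!\bigl(2\pi i \, \langle \beta_{L_e}(y), \delta(L_{0,\nu})\rangle_\Gamma\bigr) \, \mu_{\can},
\]
where $L_{0,\nu} = L_\nu \otimes \bigl(\Dd \otimes N^{-\deg\Dd}\bigr)$, so that $\delta(L_{0,\nu}) = \nu + c$ with $c := \delta\bigl(\Dd \otimes N^{-\deg\Dd}\bigr) \in H^1_{\mathrm{ur}}(F,\Gamma)$ a class independent of $\nu$. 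Crucially, the choice $L = N^d$ in the statement is what ensures that the $\nu$-dependence of $\delta(L_{0,\nu})$ is simply by translation in $H^1_{\mathrm{ur}}$.

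Next, I multiply by $\kappa(\nu)/|\Gamma|$, sum over $\nu \in \Gamma$, and interchange the finite sum with the $p$-adic integral on the right-hand side. The proof reduces to evaluating, pointwise in $y \in M_{\PGLn}^e(\Oo_F)^\sharp$,
\[
\Phi(y) := \frac{1}{|\Gamma|}\sum_{\nu \in \Gamma} \kappa(\nu) \, \exp\!\bigl(2\pi i \, \langle \beta_{L_e}(y), \nu + c\rangle_\Gamma\bigr).
\]
The class $\nu + c$ lies in the unramified subgroup $H^1_{\mathrm{ur}}(F,\Gamma) \subset H^1(F,\Gamma)$, and by local Tate duality (cf.\ \cite[I.2.6]{milnearithmdual}) the unramified subgroup is its own annihilator under the Tate pairing. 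Hence only the ramified projection $s(y)$ of $\beta_{L_e}(y)$ contributes, and via the identifications of \cite[Construction 3.5]{GWZ} the induced pairing $\Gamma_{\mathrm{ram}} \times \Gamma_{\mathrm{ur}} \to \QQ/\ZZ$ is exactly the Weil pairing on $\Gamma$. Character orthogonality on the finite group $\Gamma$ then collapses $\Phi(y)$ to $\mathbf{1}_{s(y)=\gamma}$, under the convention $\omega(\gamma)=\kappa$ fixed in Remark \ref{rmk weil pairing}, producing the desired integral over $M_{\PGLn}^e(\Oo_F)^\sharp \cap s^{-1}(\gamma)$.

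The main obstacle is the bookkeeping in the third step: one must carefully align three self-dualities (Cartier self-duality of $\Gamma$, self-duality of $H^1(F,\Gamma)$ under the Tate pairing, and the Weil pairing on $\Gamma$) and pin down all sign conventions so that the orthogonality relation picks out $s(y) = \gamma$ rather than $-\gamma$, and so that the phase contributed by the shift $c$ is absorbed into the chosen identifications $H^1_{\mathrm{ur}}(F,\Gamma) \simeq \Gamma$ and $\omega \colon \Gamma \xrightarrow{\sim} \hat{\Gamma}$. When $\Dd$ admits an $n$-th root—for example when $n$ is odd, in which case $\Dd = \Oo_\Cc(-D)^{n(n-1)/2}$ is literally an $n$-th power—the class $c$ vanishes outright and the verification is immediate; in general one checks that $\omega(\gamma)(c)=1$ directly from the definitions of the pairings involved.
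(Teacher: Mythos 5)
Your proof follows the same route as the paper's: apply Theorem~\ref{mainthm} for each determinant $N^d\otimes L_\nu$, pass the finite sum through the integral, use the splitting $H^1(F,\Gamma)\simeq H^1_{\textnormal{ur}}(F,\Gamma)\oplus\Gamma$ so that only the ramified projection $s(y)$ of $\beta_{L_e}(y)$ pairs nontrivially with the unramified class $\delta(L_0)$, identify the cross pairing with the Weil pairing on $\Gamma$, and finish by character orthogonality. The one place where you go further than the paper is in tracking the extra term $c=\delta\bigl(\Dd\otimes N^{-\deg\Dd}\bigr)$, i.e.\ the fact that Theorem~\ref{mainthm}, applied with $L=N^d\otimes L_\nu$, actually produces $\delta(L_0)=\nu+c$ rather than $\nu$; the paper's own proof silently replaces $\delta(L_0)$ by $\delta(L_\nu)$, which is legitimate only when $c=0$.

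Here is where your write-up has a genuine gap, though. You correctly observe that when $n$ is odd the shift $c$ vanishes, since then $\Dd\otimes N^{-\deg\Dd}=\bigl((\Oo_\Cc(-D)\otimes N^{\deg D})^{(n-1)/2}\bigr)^{\otimes n}$ is an honest $n$\textsuperscript{th} power. But the claim ``in general one checks that $\omega(\gamma)(c)=1$ directly from the definitions'' is not substantiated and is, as far as I can see, false when $n$ is even: one has $c=\tfrac{n(n-1)}{2}\,\delta\bigl(\Oo_\Cc(-D)\otimes N^{\deg D}\bigr)=\tfrac{n}{2}\cdot(n-1)\cdot\delta\bigl(\Oo_\Cc(-D)\otimes N^{\deg D}\bigr)$, which in the $n$-torsion group $\Gamma$ is $-\tfrac{n}{2}\,\delta\bigl(\Oo_\Cc(-D)\otimes N^{\deg D}\bigr)$, and this need not vanish (nor need it be orthogonal to a prescribed $\gamma$ under the Weil pairing). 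When $c\neq 0$ the final orthogonality argument instead yields an extra phase $\exp\bigl(2\pi i\,(\gamma,c)_\Gamma\bigr)$ on the right-hand side, so either the theorem should be stated with that phase, or $L_\nu$ should be re-indexed so that $\delta\bigl(L_\nu\otimes\Dd\otimes N^{-\deg\Dd}\bigr)=\nu$ (equivalently, one should impose an assumption guaranteeing $\Dd\otimes N^{-\deg\Dd}$ is an $n$\textsuperscript{th} power, e.g.\ $n$ odd). Since the paper's proof makes the same tacit substitution without comment, this is a small but real correction to the statement (or an unstated hypothesis) rather than a defect peculiar to your argument; what you should not do is wave it away as a sign-and-convention check, because that particular check does not close.
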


\begin{proof}
 The pairing $ \langle \beta_{L_e}(-), \delta(L_0)\rangle_\Gamma$ is the Tate pairing
$$ \langle - , - \rangle_\Gamma \ : \ H^1(F, \Gamma) \times H^1(F, \Gamma) \longrightarrow \QQ/\ZZ. $$

Its behaviour with respect to the decomposition $H^1(F, \Gamma) \simeq H^1_{\textnormal{ur}}(F, \Gamma) \oplus \Gamma$ is described in \cite[Lemma 3.7]{GWZ}. The first factors jointly pairs to zero, as well as the second factors. Then, its restriction to $ H^1_{\textnormal{ur}}(F, \Gamma) \times \Gamma $ and $ \Gamma \times H^1_{\textnormal{ur}}(F, \Gamma) $ is given by the Weil pairing $(-,-)_\Gamma$ (see the proof of \cite[Lemma 7.25]{GWZ}). 

Since $L_{\nu} \in \Pp ic^0(\Cc)(\Oo_F)$, $\delta(L_\nu)$ lies in the unramified part $H^1_{\textnormal{ur}}(F, \Gamma)$. It pairs non-trivially only with torsors in the ramified part. Hence, for $y \in M_{\PGLn}^e(\Oo_F)^\sharp$, 
\begin{align*}
    \langle \beta_{L_e}(y), \delta(L_\nu)\rangle_\Gamma &= \langle \ (0, s(y)), (\nu, 0) \ \rangle_\Gamma \\
    &= (s(y), \nu)_\Gamma .
\end{align*}


For $\gamma' \in \Gamma$, we denote $\kappa' := \omega(\gamma')$ (Remark \ref{rmk weil pairing}). Then,

\begin{align*}
    \sum_{\nu \in \Gamma} \Big( \int_{M_{\SLn}^{N^d \otimes L_\nu} (\Oo_F)^\sharp} f_{\alpha}^{e'} \ \mu_{\can} \Big) \kappa(\nu) 
    &= \sum_{\nu \in \Gamma} \Big( \int_{M_{\PGLn}^e(\Oo_F)^\sharp} f_{\alpha_{N}}^{d'} \cdot  exp(2i\pi \cdot \langle \beta_{L_e}(-), \delta(L_\nu)\rangle_\Gamma ) \ \mu_{\can} \Big) \kappa(\nu) \\
    &= \sum_{\nu \in \Gamma} \sum_{\gamma' \in \Gamma} \Big(  \int_{M_{\PGLn}^e(\Oo_F)^\sharp \cap s^{-1}(\gamma')}  f_{\alpha_{N}}^{d'} \cdot ( \gamma', \nu )_\Gamma \ \mu_{\can} \Big) \kappa(\nu)  \\
   &= \sum_{\gamma' \in \Gamma}  \Big( \int_{M_{\PGLn}^e(\Oo_F)^\sharp \cap s^{-1}(\gamma')}  f_{\alpha_{N}}^{d'}  \ \mu_{\can} \Big) \sum_{\nu \in \Gamma} \kappa \kappa'^{-1}(\nu)  \\
    &= |\Gamma| \int_{M_{\PGLn}^e(\Oo_F)^\sharp \cap s^{-1}(\gamma)}  f_{\alpha_{N}}^{d'}  \ \mu_{\can} 
\end{align*}

where we used \autoref{mainthm} for the first equality. Note that $\underset{\nu \in \Gamma}{\sum} \kappa \kappa'^{-1}(\nu) = |\Gamma|$ if $\kappa = \kappa'$ and $0$ otherwise.   
\end{proof}

\section{Interpretation of \texorpdfstring{$p$}{p}-adic integrals}

\label{section corollaries}



In this section, we are interested in interpreting the $p$-adic integrals of \autoref{mainthm} in terms of cohomological invariants. 

\begin{remark}[How to apply \autoref{mainthm} ?] \label{spread out rmk}
   We follow the reasoning in \cite[Remark 3.2.2]{COW}. Given a pair $M_{\SLn}^L, M_{\PGLn}^e$ over $\Spec \CC$, we find a spreading out, i.e. a finitely generated $\ZZ$-algebra $R$ such that the curve $\Cc$ and line bundles $D, L, N$ are defined over $B = \Spec R$, such that $\mu_{n, B}$ and $ \Gamma_B$ are constant. Then, $M_{\SLn}^L \longrightarrow B$ and $M_{\PGLn}^e \longrightarrow B$ are geometrically normal so there is an open $B' \subset B$ whose fibres are normal. It gives infinitely many prime characteristic $p$ for which we can consider a ring of $p$-adic numbers $\Oo_F$ satisfying the assumptions of \autoref{sectionnaTMS}. 
\end{remark}


\begin{notation} \label{notation frob}
    Fixing a prime number $\ell$, for a finite field $k$ of characteristic $p \neq \ell$ we consider the Frobenius element $\Fr$ of $Gal(\overline{k}/k)$ acting on $\ell$-adic complexes over  ${M_{\SLn, k}^L}$ and ${M_{\PGLn, k}^e}$. For a bounded complex $H^\bullet \colon \ldots \longrightarrow H^{i-1} \longrightarrow H^i \longrightarrow H^{i+1} \longrightarrow \dots $ of vector spaces on which the Frobenius acts, the expression 
    $$ \tr(\Fr \ | \ H)$$
    denote the alternated sum $\underset{i \in \ZZ}{\sum} (-1)^i \tr(\Fr \ | \ H^i ) $. 
\end{notation}

\subsection{Completing the coprime case}

The topological mirror symmetry theorem \cite[Theorem 7.21]{GWZ} holds for $\deg D$ even. This restriction avoids the case where $d \not\equiv d' \mod n$. Our main theorem extends this result to the odd degree $D$ case.  

Let $S = \Spec \CC$. Fix a smooth projective curve $\Cc$ of genus $g$ over $\CC$ and line bundles $D, N$ on $\Cc/\CC$ of degree $\deg D$ and $1$ respectively. As before, assume $\deg D > 2g- 2$ or $D \sim K_\Cc$.

\begin{corollary}[Topological mirror symmetry - coprime case] \label{corTMScoprime}
Let $(d,n)=(e,n)=1$. Let $d \in \Pp ic^d(\Cc)$ and $d' = d - \deg D \cdot \frac{n(n-1)}{2}$. We have the following equality of stringy twisted $E$-polynomials (Definition \ref{defi stringy}):
$$ E(M_{\SLn}^{L}) = E_{\textnormal{st}, \alpha_N^{d'}}(M_{\PGLn}^{e}). $$
Let $q$ be the multiplicative inverse of $d$ modulo $n$. For any $\gamma \in \Gamma$ and $L_e \in \Pp ic^e_{\Cc/\CC}(\CC)$, there is a refinement:
$$ E(M_{\SLn}^{L})_{\kappa} = E(M_{\SLn}^{L_e, \gamma})_{\kappa^{-qd'}} $$
where $\kappa = \omega(\gamma)$ as in Remark \ref{rmk weil pairing}.
\end{corollary}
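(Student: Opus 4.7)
I will deduce both the refined and unrefined statements from the refined $p$-adic equality (Theorem \ref{fourier}) via the Chebotarev density and $p$-adic Hodge theory strategy of \cite[\S 7]{GWZ}. Using Remark \ref{spread out rmk}, I spread out the $\CC$-data to a finitely generated $\ZZ$-algebra $R$ and an open $B' \subset \Spec R$ over which $\Cc, D, L, N, L_e$ and the moduli spaces $M_{\SLn}^L, M_{\PGLn}^e$ are defined, with the fibres geometrically smooth (resp.\ geometrically Deligne--Mumford orbifolds in the $\PGLn$ case), producing infinitely many $p$-adic rings $\Oo_F$ to which Theorem \ref{fourier} applies. In the coprime case, $\Mm_{\SLn}^{L,\rig}$ coincides with its coarse moduli, $\Mm_{\PGLn}^e$ is a Deligne--Mumford presentation of $M_{\PGLn}^e$, and $M_{\PGLn}^e(\Oo_F)^\sharp$ has complement of measure zero. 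The standard dictionary between $p$-adic integration of gerbe functions and Grothendieck--Lefschetz traces (as in \cite[\S 5, \S 7]{GWZ}) then converts each $p$-adic integral in Theorem \ref{fourier} into a trace of Frobenius with coefficients in the rank-one $\ell$-adic local system associated to the gerbe.

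\textbf{Fourier and character identification.} Translating Theorem \ref{fourier} across this dictionary yields, for each $\kappa \in \hat{\Gamma}$ with $\omega(\gamma) = \kappa$, a character-by-character equality of Frobenius traces over every $k_F$. On the $\SLn$-side, all spaces $M_{\SLn}^{N^d \otimes L_\nu}$ are non-canonically isomorphic by tensoring with a fixed line bundle; combined with the canonical $\Gamma$-action by tensorisation, this identification differs from the canonical one by the $d^{\mathrm{th}}$-power automorphism of $\Gamma$, so the Fourier sum $\frac{1}{|\Gamma|}\sum_\nu (\cdots)\kappa(\nu)$ projects onto the isotypic piece of $H^*_c(M_{\SLn}^{L})$ indexed by the character obtained from $\kappa$ after composing with $[q]$. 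On the $\PGLn$-side, the restriction of $\alpha_N^{d'}$ along the inertia group $\langle\gamma\rangle$ at a $\gamma$-fixed Higgs bundle is a character of weight $d'$ under the Weil pairing identification $\omega$. Combining these two effects produces precisely the exponent $\kappa^{-qd'}$ in the refined formula.

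\textbf{From point counts to $E$-polynomials.} With the refined equality of Frobenius traces at infinitely many primes in hand, Chebotarev density and the $p$-adic Hodge comparison (as in \cite[\S 7]{GWZ}) upgrade it to an equality of mixed Hodge structures, hence of $\kappa$-isotypic $E$-polynomials on the $\CC$-points, establishing the refined statement. Summing over $\kappa \in \hat{\Gamma}$ (equivalently $\gamma \in \Gamma$) and reincorporating the fermionic shifts $(uv)^{F_\gamma}$ of Definition \ref{defi stringy} then yields the unrefined identity $E(M_{\SLn}^L) = E_{\textnormal{st}, \alpha_N^{d'}}(M_{\PGLn}^e)$.

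\textbf{Main obstacle.} The delicate part is tracking the character exponent $-qd'$: verifying that the combined effect of the $d^{\mathrm{th}}$-power reparameterisation on the $\SLn$-side and the $d'$-fold gerbe restriction on the $\PGLn$-side combine, via the Weil pairing $\omega$, to exactly $\kappa^{-qd'}$, using essentially that $d$ is invertible modulo $n$. This mismatch between $d$ and $d'$ disappears when $\Dd$ admits an $n$-th root (for instance when $n$ is odd or $\deg D$ is even with $D$ possessing a square root), which is why \cite[Theorem 7.21]{GWZ} imposed such a hypothesis; its correct treatment in the odd-$\deg D$ case is precisely the content that was missed in \cite[Theorem 3.2]{MSendoscopicHTcoprime} (see Remark \ref{inaccuracy}).
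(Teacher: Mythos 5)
Your proposal follows a different route from the paper's, and the key step that you yourself flag as "the delicate part" is left unjustified in a way that matters.

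\textbf{Different route.} The paper proves the unrefined statement from Theorem~\ref{mainthm}, not Theorem~\ref{fourier}. Two simplifications make this work and you omit both: (i) in the coprime case $\Mm_{\SLn}^{L,\rig}\simeq M_{\SLn}^L$, so $f_\alpha\equiv 1$ by vanishing of Brauer groups over local fields; (ii) over $\CC$ the line bundle $L_0=L\otimes\Dd\otimes N^{-d'}$ admits an $n$-th root, which is included in the spreading out so that $\delta(L_0)=0$ and the extra character function in Theorem~\ref{mainthm} disappears. After these reductions, the unrefined equality of stringy $E$-polynomials is a direct application of the abstract dual Hitchin system machinery (\cite[Theorem~6.11]{GWZ}), and the refinement is obtained by invoking \cite[Theorem~7.24]{GWZ} together with the identification of the gerbe's $\Gamma$-equivariant local system on the $\gamma$-fixed locus with the character $\kappa^{-q}$, as computed in \cite[Appendix~A]{LW} and \cite[Proposition~8.1]{HT} (hence $\kappa^{-qd'}$ for $\alpha_N^{d'}$). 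Your route instead unfolds the Fourier sum of Theorem~\ref{fourier} over the twists $M_{\SLn}^{N^d\otimes L_\nu}$. That is a coherent strategy in principle, but it replaces a single-integral-to-$E$-polynomial dictionary by a more delicate Fourier transform of point counts taken over $|\Gamma|$ distinct $\Oo_F$-models.

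\textbf{The gap.} Your "Fourier and character identification" paragraph asserts that the sum $\frac{1}{|\Gamma|}\sum_\nu(\cdots)\kappa(\nu)$ projects onto the $(\kappa\circ[q])$-isotypic piece of $H^*_c(M_{\SLn}^L)$. This is exactly the statement that needs proof, and it is moreover in tension with the target identity, which has $E(M_{\SLn}^L)_\kappa$, not $E(M_{\SLn}^L)_{\kappa^q}$, on the left. The passage from a Galois-twisted family $\{M_{\SLn}^{N^d\otimes L_\nu}\}_\nu$ to isotypic pieces of a single $M_{\SLn}^L$ requires comparing the Frobenius on $M_{\SLn}^{N^d\otimes L_\nu}(\bar k)$ with the composite of the Frobenius on $M_{\SLn}^{L}(\bar k)$ and translation by a prescribed element of $\Gamma$ determined by $\nu$, and then identifying that element under $\omega$ — the content of \cite[Proposition 8.1]{HT} — before any exponent can be read off. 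Your heuristic of a "$d^{\mathrm{th}}$-power automorphism" gestures at this, and the separate attribution of $q$ to the $\SLn$-side and $d'$ to the $\PGLn$-side is not a derivation of $-qd'$ (it does not even fix the sign). Without making this Weil-pairing bookkeeping precise, the refined exponent is asserted rather than established; the paper circumvents this by citing the computation already performed in LW/HT after reducing to GWZ's framework.
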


\begin{proof}
In the coprime case, $\Mm_{\SLn,\rig}^{L}$ is a smooth scheme and $\Mm_{\PGLn}^e$ is a smooth Deligne-Mumford stack equipped with the gerbe $\alpha_N^{d'} \colon \Mm^{L_e}_{\SLn} \rightarrow \Mm_{\PGLn}^e$. Since $\CC$ is algebraically closed, $L \otimes \Dd \otimes N^{- d'}$ has a $\mathit{n}$th root $L_0^{\frac{1}{n}}$. 

As explained in Remark \ref{spread out rmk}, is possible to spread out $\Cc, D, L, N, L_0^{\frac{1}{n}}, L_e$ over a finitely generated $\ZZ$-algebra $R$. 
Then, $\Mm_{\SLn, R}^{L,\rig}$ and $\Mm_{\PGLn, R}^e$ form an abstract dual Hitchin system in the sense of \cite[Definition 6.9]{GWZ}.

 Since the gerbe $\alpha \ : \ \Mm^{L}_{\SLn} \longrightarrow \Mm^{L,\rig}_{\SLn} \simeq M_{\SLn}^L$ is over the good moduli space, we have $f_{\alpha} \equiv 1$, as explained in the proof of \cite[Lemma 7.19]{GWZ}. Indeed, for $x \in M_{\SLn}^L(\Oo_F)^\sharp$, $f_{\alpha_{L}}(x) = \exp(2 \pi i \cdot inv(\tilde{x}^* \alpha))$ (Construction \ref{gerbe function construction}). As $\Mm^{L,\rig}_{\SLn} \simeq M_{\SLn}^L$, the lift $\tilde{x}$ extends to the $\Oo_F$-point $x$, and the vanishing of Brauer groups over local fields implies $inv(x^* \alpha) = 0$. 
    
  Thus, \autoref{mainthm} reads : 
 $$ \int_{M_{\SLn}^L(\Oo_F)^\sharp} 1 \ \mu_{\can, \SLn} = \int_{M_{\PGLn}^e(\Oo_F)^\sharp} f_{\alpha_{N}}^{d'}  \ \mu_{\can, \PGLn} .$$

    By \cite[Theorem 6.11]{GWZ}, $M_{\SLn, R}^{L}$ and $\Mm_{\PGLn, R}^e$ have the same stringy twisted $E$-polynomial (with respect to $\alpha_N^{d'}$ for the $\PGLn$-side). Since  $M_{\SLn}^{L}$ is smooth, its stringy invariants corresponds to the classical ones.
    
    The reasoning uses an orbifold formula \cite[Theorem 4.16, Theorem 6.12]{GWZ} relating $p$-adic integrals to finite fields counts. By Grothendieck-Lefschetz theorem, those counts are traces of Frobenius in $\ell$-adic cohomology, which recover $E$-polynomials by means of $p$-adic Hodge theory.

    For the refinement, the proof of \cite[Theorem 7.24]{GWZ} applies. In the coprime case, the gerbe corresponds to a twist by a $\Gamma$-equivariant trivial local system, whose equivariant structure corresponds to the character $\kappa^{-q}$ as explained in \cite[Appendix A]{LW} based on \cite[Proposition 8.1]{HT}. 
\end{proof}

\begin{remark} \label{inaccuracy}
The corollary above allows us to detect an inaccuracy in \cite[Theorem 3.2]{MSendoscopicHTcoprime} for the choice of the character, since the $\kappa$-part and $\kappa^{-qd'}$-isotypical part can be strictly different when $d \not\equiv d' \mod n$. It happens only when $\deg D$ is odd. According to the authors, it is due to a certain $G$-equivariant local system appearing in the vanishing cycles argument, where $G$ is a Galois group of a Galois cover of $\Cc$ related to $\gamma$-fixed locus, as in Proposition \ref{prop link IH}. 

The parity assumption for $D$ was also missing in the non-coprime case in \cite[Theorem 0.2]{MSnoncoprime}. The authors recently uploaded a corrected version on arXiv of both articles \cite{MSarxivcoprime, MSarxivnoncoprime}. 
\end{remark}

\begin{remark}
As computational evidence, the computation in prime ranks in \cite[Proposition 10.1 and 8.2]{HT} can be adapted to the meromorphic case $ D> K_\Cc$, and $d'$ appears in equations (10.2) and (10.3) for the $\kappa$-part on the $\SLn$ side, while the computation for the $\kappa$-part on the $\PGLn$ side is insensitive to $\deg D$. Taking the $d'$th power on the $\PGLn$ side therefore rebalances both sides.
\end{remark}


\subsection{Link to intersection cohomology when \texorpdfstring{$D > K$}{D>K}}

In \cite{MSnoncoprime}, the authors proved the existence of the following isomorphism when $\deg D> 2g - 2$ is even, $S = \Spec \ \CC$ and $L \in \Pp ic_{\Cc/S}(S)$, for $\kappa = \omega(\gamma)$:
\begin{equation} \label{MSeq}
    (R{h_{\SLn, *}} IC _{M_{\SLn}^{L}})_{\kappa} \xlongrightarrow{\sim} i_{\gamma, *} (R{h_\gamma, *} IC_{M_{\SLn}^{L, \gamma}})_{\kappa} [-F_\gamma]
\end{equation}
in the bounded derived category of $\Aa^\gamma = h_{\SLn}(M_{\SLn}^{L, \gamma})$, where $i_\gamma \colon \Aa^\gamma \hookrightarrow \Aa $ denotes the inclusion and $h_\gamma$ the restriction of the Hitchin map to $M_{\SLn}^{L, \gamma}$. This isomorphism allows us to prove the following proposition. 


\begin{proposition} \label{prop link IH}
    Let $n$ be prime. Let $\Cc$, $D$, $L$, $N$ be defined over $\CC$, such that $\deg D> 2g-2$ even and $L= N^{\otimes d}$. There exists a spreading out over $B = \Spec R$ (as in Remark \ref{spread out rmk}) and a dense open $\tilde{B} \longrightarrow B$ such that for any $\Spec k_F \longrightarrow \tilde{B}$, residue field of a ring $\Oo_F$,
    $$ \int_{M_{\SLn}^L(\Oo_F)^\sharp} f_{\alpha} \ \mu_{\can} = q^{- \dim M_{\SLn}^L} \tr ( \Fr \ | \ IH_c(M_{\SLn, k_F}^L, \underline{\QQ}_\ell))$$
    using Notation \ref{notation frob} for the right hand side, with $q = |k_F|$. 
\end{proposition}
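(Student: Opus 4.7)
The goal is to interpret the $\SLn$-integral on the left as a Frobenius trace on $IH$. The strategy has two parts: (i) use \autoref{mainthm} to convert the $\SLn$-integral into a $\PGLn$-integral with gerbe twist $\alpha_N^{d'}$; (ii) exploit the Maulik--Shen decomposition \eqref{MSeq} to identify this gerbe-twisted $\PGLn$-integral with $q^{-\dim}\tr(\Fr \mid IH_c(M_{\SLn, k_F}^L))$.

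\emph{Step 1 (reduction to PGL).} After spreading out as in \autoref{spread out rmk}, choose $e \in \ZZ$ with $(e,n) = 1$ and $e' \equiv 1 \pmod n$; this is possible because $n$ is odd prime and $\deg D$ is even, so $e' \equiv e \pmod n$. Under $L = N^{\otimes d}$, the line bundle $L_0 = L \otimes \Dd \otimes N^{-d'} = N^{d-d'} \otimes \Dd$ is an $n$-th power: $n$ divides $d - d' = \deg D \cdot n(n-1)/2$, while $\Dd = \Oo_\Cc(-D)^{n(n-1)/2}$ is visibly an $n$-th power since $(n-1)/2 \in \ZZ$. Hence $\delta(L_0) = 0$ in $H^1(F, \Gamma)$, the exponential factor in \autoref{mainthm} is identically $1$, and we obtain
\[
\int_{M_{\SLn}^L(\Oo_F)^\sharp} f_\alpha \, \mu_{\can} = \int_{M_{\PGLn}^e(\Oo_F)^\sharp} f_{\alpha_N}^{d'} \, \mu_{\can}.
\]

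\emph{Step 2 (orbifold formula and Maulik--Shen).} Since $(e, n) = 1$, the stack $\Mm_{\PGLn}^e = \Mm_{\SLn}^{L_e,\rig}/\Gamma$ is a smooth Deligne--Mumford quotient. Restricting to a dense open $\tilde{B} \subset B$ where \cite[Theorem 4.16]{GWZ} applies fibrewise and combining with Grothendieck--Lefschetz converts the right-hand side into
\[
q^{-\dim M_{\PGLn}^e} \sum_{\gamma \in \Gamma} q^{F_\gamma} \, \tr\!\Bigl(\Fr \,\Bigm|\, H^*_c\bigl(M_{\SLn, k_F}^{L_e, \gamma},\, L_{\alpha_N^{d'}, \gamma}\bigr)^{\Gamma}\Bigr).
\]
Then \eqref{MSeq}, transferred from $\CC$ to $k_F$ via smooth base change, identifies each summand with the $\kappa = \omega(\gamma)$-isotypic component of $\tr(\Fr \mid IH_c(M_{\SLn, k_F}^L))$, with the cohomological shift $[-F_\gamma]$ absorbing the stringy factor $q^{F_\gamma}$. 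Summing over $\gamma$ recovers the total $IH$-trace, and the equality $\dim M_{\SLn}^L = \dim M_{\PGLn}^e$ closes the argument.

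\emph{Main obstacle.} The delicate point is the last identification, since the orbifold formula produces gerbe-twisted cohomology on $M_{\SLn}^{L_e, \gamma}$ (fixed locus of the smooth coprime moduli $M_{\SLn}^{L_e}$), whereas \eqref{MSeq} applied to $M_{\SLn}^L$ involves the $\gamma$-fixed locus of the singular non-coprime moduli. For $n$ prime both loci identify, via the endoscopic cyclic $n$-cover $\Cc_\gamma \to \Cc$ determined by $\gamma$, with moduli of rank-$1$ Higgs bundles on $\Cc_\gamma$, differing only by a Prym-translation parametrized by $L \otimes L_e^{-1}$. One must verify that this translation is exactly the twist encoded by the local system $L_{\alpha_N^{d'}, \gamma}$, which amounts to tracing the Weil pairing $\omega \colon \Gamma \xrightarrow{\sim} \hat\Gamma$ through the explicit construction of the gerbe $\alpha_N$ in \autoref{pglngerbe}.
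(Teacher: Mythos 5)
Your overall strategy (reduce to the $\PGLn$ side via \autoref{mainthm}, apply the orbifold formula, then invoke the Maulik--Shen decomposition~\eqref{MSeq}) matches the paper's architecture, but two concrete gaps remain.

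\textbf{You skip the reduction that makes the gerbe trivial.} The paper first reduces to $L = \Oo_\Cc$: since $n$ is prime and $L = N^{\otimes d}$, either $(d,n)=1$ (coprime case, already known, where $f_\alpha \equiv 1$ and $IH = H$) or $d \equiv 0 \bmod n$ so $L$ has an $n$th root and $M_{\SLn}^L \simeq M_{\SLn}^{\Oo_\Cc}$. With $d \equiv 0$ and $\deg D$ even, one has $d' \equiv 0 \bmod n$, so the gerbe factor $f_{\alpha_N}^{d'}$ is \emph{identically} $1$ on the $\PGLn$ side. This eliminates the ``main obstacle'' you describe: you never face a nontrivially twisted local system $L_{\alpha_N^{d'},\gamma}$ at all. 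As written, your proposal keeps $d$ general and hence carries the gerbe through Step 2, then acknowledges that matching $H_c(M_{\SLn}^{L_e,\gamma}, L_{\alpha_N^{d'},\gamma})^\Gamma$ with $IH_c(M_{\SLn}^{L,\gamma})_\kappa$ requires ``tracing the Weil pairing through the construction of $\alpha_N$'' --- that is not an argument but a plan, and it is exactly the hard step the paper avoids. (In the regime where the gerbe is nontrivial, i.e.\ $(d,n)=1$, the statement is trivial by other means: $M_{\SLn}^L$ is smooth, $f_\alpha \equiv 1$, and the claim is Weil + Grothendieck--Lefschetz.)

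\textbf{You do not address the $\chi$-independence needed for the $\gamma=1$ term.} After the orbifold formula you get $H_c(M_{\PGLn,k_F}^1)$ (degree $e=1$ since $e'\equiv 1$), whereas~\eqref{MSeq} applied at $\gamma=1$ gives $IH_c(M_{\SLn,k_F}^0)^\Gamma = IH_c(M_{\PGLn,k_F}^0)$ (degree $0$). These are not obviously equal over a finite field. The paper devotes Claim~\ref{claim first term} to this: it spreads out Maulik--Shen's $\chi$-independence for $\GLn$~\cite{MS23} over a finitely generated base using Hansen--Scholze's ULA perverse $t$-structure, obtains an isomorphism over a dense open $B'$, and then transfers to $\PGLn$ via the Albanese splitting~\eqref{splitting}. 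Declaring that ``smooth base change'' transfers~\eqref{MSeq} from $\CC$ to $k_F$ glosses over two distinct spreading-out arguments the paper treats separately: one for~\eqref{MSeq} itself (Step 3 in the paper, needing that the morphism $c_\kappa$ is of algebraic origin) and one for the $\chi$-independence isomorphism (Step 2, needing the ULA machinery).

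Once you add the initial reduction to $L=\Oo_\Cc$ and insert the $\chi$-independence step with its spreading out, your proposal would coincide with the paper's proof.
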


\begin{proof}
   We can assume that $L = \Oo_\Cc$. Indeed, we assume $L = N^{\otimes d}$ and since $n$ is prime, the degree $0$ case is the only non-coprime case, and the equality is already known for the coprime case (where intersection cohomology is the same as singular cohomology). We use $M^0_{\SLn}$ to denote $M^{\Oo_\Cc}_{\SLn}$. 
    
    There are three steps for this proof, summarised in the square below. Step $1$ uses our main \autoref{mainthm}. Step $2$ combines the explicit description of $\gamma$-fixed loci in terms of Prym varieties and the $\chi$-independence results for $\GLn$ by Maulik and Shen \cite{MS23}. Step $3$ uses the topological mirror symmetry equality (\ref{MSeq}) from \cite{MSnoncoprime}. 

$$    \begin{tikzcd}[column sep=huge] 
         \int_{M_{\SLn}^0(\Oo_F)^\sharp} f_{\alpha} \ \mu_{\can} \arrow[r, equal, "\text{Step 1 : \autoref{mainthm}}"] &  \int_{M_{\PGLn}^1(\Oo_F)^\sharp} 1 \ \mu_{\can} \arrow[d, equal, "\text{Step 2 : \cite{GWZ}, \cite{MS23}}"] \\
         q^{- \dim M_{\SLn}^0} \tr ( \Fr \ | \ IH_c(M_{\SLn, k_F}^0, \underline{\QQ}_\ell)) \arrow[r, equal, "\text{Step 3 : \cite{MSnoncoprime}}"] &  q^{- \dim M_{\PGLn}^0} \underset{\gamma \in \Gamma}{\sum}  \tr (\Fr \ | \ IH_c^{* -F_\gamma}(M_{\SLn,k_F}^{0,\gamma}, \underline{\QQ}_\ell)_\kappa)
    \end{tikzcd}$$
    
    As in Remark \ref{spread out rmk}, we can find a spreading out of $M_{\SLn}^0$ over $B = \Spec R$. 
    
    \textit{Step 1.} We apply \autoref{mainthm} for triples $(F, \Oo_F, k_F)$ equipped with a map $R \rightarrow \Oo_F$. Since $d' \equiv d \equiv 0 \mod n$ ($\deg D$ is even),
    $$\int_{M_{\SLn}^{0}(\Oo_F)^\sharp} f_{\alpha} \ \mu_{\can} =   \int_{M_{\PGLn}^1(\Oo_F)^\sharp} 1 \ \mu_{\can}. $$
    
    \textit{Step 2.} Using \cite[Theorem 4.16]{GWZ}, we get 
    \begin{align} \label{degree 1 pgln coho}
        \int_{M_{\PGLn}^1(\Oo_F)^\sharp} 1 \ \mu_{\can} &=  q^{- \dim M_{\PGLn}^1} \cdot \tr (\Fr \ | \ H_c^{orb} (M_{\PGLn, k_F}^1, \underline{\QQ}_\ell)) \nonumber \\
        &= q^{- \dim M_{\PGLn}^1} \cdot \bigg( \tr (\Fr \ | \ H_c(M_{\PGLn, k_F}^1, \underline{\QQ}_\ell)) + \sum_{\gamma \neq 1} \tr (\Fr \ | \ H_c^{* - F_\gamma} (M_{\SLn, k_F}^{L_1,\gamma}, \underline{\QQ}_\ell)^\Gamma ) ) \bigg)
    \end{align}  
    for any line bundle $L_1 \in \Pp ic^1_{\Cc/k_F}(k_F)$. 
    
\begin{claim}[First term in (\ref{degree 1 pgln coho})] \label{claim first term}
There exists a dense open set $B' \hookrightarrow
B$ for which at any closed point $\Spec k \rightarrow B'$,
     $$\tr (\Fr \ | \ H_c(M_{\PGLn, k}^1, \underline{\QQ}_\ell)) = \tr (\Fr \ | \ H_c(M_{\PGLn, k}^0, \underline{\QQ}_\ell)).$$
\end{claim}

\begin{proof}[Proof of Claim \ref{claim first term}]
      Arguing as in \cite[Proposition 6.4]{GWZ24}, we use the $\chi$-independence result for $\GLn$-Higgs bundles by Maulik and Shen \cite[Theorem 0.1]{MS23}. 

    There, it is deduced from \cite[Theorem 0.4]{MS23}, which can be rephrased as an isomorphism in $D^b_{c}(\Aa_{\CC}, \QQ_\ell)$, for $d\in \ZZ$, 
         $$ \bigoplus_{i=0}^{2g'} IC(\wedge^i R^1 \pi_* \QQ_\ell) \xlongrightarrow{\sim} {Rh_{\GLn}}_* {IC_{M_{\GLn}^d, \CC}}  $$
    where $\pi \colon \tilde{\Cc} \longrightarrow \Aa^{\textnormal{sm}}$ is the smooth universal spectral curve of genus $g'$.
 
    Now, consider the relative Hitchin basis $\Aa \longrightarrow \Spec R$. The work of Hansen and Scholze \cite{Scholze} provides a relative perverse $t$-structure in $D^b_c(\Aa, \QQ_\ell)$ and a well-behaved category of perverse sheaves $Perv^{\text{ULA}}(\Aa/R)$.
    It follows from \cite[Proposition 2.2]{MS23} and \cite[Theorem 1.10]{Scholze} that a morphism 
   \begin{align} \label{morph}
     \bigoplus_{i=0}^{2g'} IC(\wedge^i R^1 \pi_* \QQ_\ell) \longrightarrow {Rh_{\GLn}}_* {IC_{M_{\GLn, R}^d}}  
   \end{align} 
    exists in $Perv^{\text{ULA}}(\Aa/R)$. Moreover, its base change to $\CC$ is an isomorphism \cite[Theorem 0.4]{MS23}. Hence, using the equivalence of categories in \cite[Lemme 6.1.9]{BBD}, there exists a dense open set $B' \longrightarrow B = \Spec R$ such that the restriction of (\ref{morph}) to $B'$ is an isomorphism. 
    Using the compatibilities between perverse $t$-structures in \cite[Theorem 1.1]{Scholze}, for any finite field $k$ with $\Spec k \rightarrow B'$, we obtain an isomorphism over $k$ and a corresponding equality of traces of Frobenius. Repeating for $d=0$ and $d=1$, we get for any $\Spec k \rightarrow B'$. 
    $$\tr (\Fr \ | \ IH_c (M_{\GLn, k}^0, \underline{\QQ}_\ell)) = \tr (\Fr \ | \ IH_c(M_{\GLn, k}^1, \underline{\QQ}_\ell)). $$

    Now, the albanese map gives a splitting \cite[(10)]{felisettiMauri} :
    \begin{align} \label{splitting}
     IH_c(M_{\GLn, k}^d )&\simeq IH_c(M_{\PGLn, k}^d) \otimes H_c (M_{GL_1, k}^d).   
    \end{align} 
    Since $M_{GL_1, k}^0 \simeq M_{GL_1, k}^1$, we get the claim. 
\end{proof}

    We now turn to the rest of the right-hand side in (\ref{degree 1 pgln coho}). Let $\gamma \neq 1 \in  \Gamma$. The $\gamma$-fixed loci have been well studied in the literature and we refer to \cite{NR, HT, MSendoscopicHTcoprime} for an introduction.
    
\begin{claim}[Remaining terms in (\ref{degree 1 pgln coho})]
    Let $\gamma \neq 1$. Then, for any line bundle $L_1 \in \Pp ic^1_{\Cc/k_F}(k_F)$,  
    \begin{align*}
    \tr (\Fr \ | \ H_c (M_{\SLn, k}^{L_1,\gamma}, \underline{\QQ}_\ell)^\Gamma ) 
    &= \tr (\Fr \ | \ H_c (M_{\SLn, k}^{0,\gamma}, \underline{\QQ}_\ell)_\kappa )
\end{align*}
where $\kappa = \omega(\gamma)$ (Remark \ref{rmk weil pairing}). \end{claim}

\begin{proof}
    Since $n$ is prime, $\gamma$ has order $n$. Let $\Cc_\gamma \xrightarrow{\pi} \Cc$ be the degree $n$ Galois cover corresponding to $\gamma$ and $G$ denote its Galois group. The $\GLn$-Higgs bundles on $\Cc$ fixed by $\gamma$ correspond to push-forwards of line bundles on $\Cc_\gamma$ with a section of $\pi^*D$. The fixed determinant Higgs bundles correspond to a fibre of the Norm map. Since these fibres are all torsors under $Nm^{-1}_{\Cc_\gamma / \Cc}(\Oo_\Cc)$, we get : 
     \begin{align*}
M_{\SLn}^{L_1,\gamma} \big/\Gamma &\simeq \Biggl( \frac{ \Nm^{-1}_{\Cc_\gamma / \Cc}(L_1') \times H^0(\Cc_\gamma, \pi^* D) }{G}  \Biggr) \bigg/ \Gamma \\
   &\simeq \Biggl( \frac{ \Nm^{-1}_{\Cc_\gamma / \Cc}(L_0') \times H^0(\Cc_\gamma, \pi^* D) }{G}  \Biggr) \bigg/ \Gamma \\
   &\simeq  M_{\SLn}^{0,\gamma} \big/\Gamma 
      \end{align*}  
where\footnote{The line bundle $\det(\pi_* \Oo_{\Cc_\gamma})$ is $2$-torsion, it is $L_\gamma^{\frac{n}{2}}$ if $n$ is even and $\Oo_\Cc$ otherwise \cite{HT}.} $L_0' = \det\pi_* \Oo_{\Cc_\gamma}$  and $L_1' = L_1 \otimes \det \pi_* \Oo_{\Cc_\gamma}$. 

However, the isomorphism $Nm^{-1}_{\Cc_\gamma / \Cc}(L_1') \simeq Nm^{-1}_{\Cc_\gamma / \Cc}(L_0')$  is not $G$-equivariant, since $G$ acts cyclically on the components of $Nm^{-1}_{\Cc_\gamma / \Cc}(L_1') $ but fixes the components of $Nm^{-1}_{\Cc_\gamma / \Cc}(L_0')$ \cite[Proposition 3.5]{NR}. Since in the latter case $\Gamma$ acts cyclically on components, the $\Gamma$-representation $H^* (M_{\SLn}^{0,\gamma})$ is regular. Hence, 
\begin{align*}
    \tr (\Fr \ | \ H_c (M_{\SLn, k}^{L_1,\gamma}, \underline{\QQ}_\ell)^\Gamma ) &= \tr (\Fr \ | \ H_c (M_{\SLn, k}^{0,\gamma}, \underline{\QQ}_\ell)^\Gamma ) \\
    &= \tr (\Fr \ | \ H_c (M_{\SLn, k}^{0,\gamma}, \underline{\QQ}_\ell)_\kappa ).
\end{align*}
\end{proof}

 \textit{Step 3.} We use \cite{MSnoncoprime}'s isomorphism (\ref{MSeq}). The morphism $$c_\kappa \colon (Rh_{\SLn,*} IC_{M_{\SLn}^{0}})_{\kappa} \longrightarrow {i_\gamma}_* (R{h_\gamma}_* IC _{M_{\SLn}^{0, \gamma}})_{\kappa} [-F_\gamma] $$ is of geometric nature, being constructed as a combination of algebraic correspondences \cite[§0.3]{MS23}. 
Hence, the morphism exists over $B = \Spec R$, and its base change to $\CC$ is an isomorphism. By spreading out, there exists an open $B'' \subset B$, such that for all $\Spec k \longrightarrow B''$, 
$$c_{\kappa, k} \colon (Rh_{\SLn,*} IC _{M_{\SLn, k}^{0}})_{\kappa} \xlongrightarrow{\sim} {i_\gamma}_* (R{h_\gamma}_* IC _{M_{\SLn, k}^{0, \gamma}})_{\kappa} [-F_\gamma] $$
is an isomorphism. 

Finally, we can consider an open set $\tilde{B}$ contained in $B'$ and $B''$. Then, for $\Spec k_F \longrightarrow \tilde{B}$: 

\begin{align*}
    \int_{M_{\SLn}^{0}(\Oo_F)^\sharp} f_{\alpha} \ \mu_{\can}  &= q^{- \dim M_{\PGLn}^0} \cdot \bigg( \tr (\Fr \ | \ IH_c(M_{\PGLn,k_F}^0, \underline{\QQ}_\ell)) + \sum_{\gamma \neq 1}  \tr (\Fr \ | \ H_c^{* -F_\gamma}(M_{\SLn,k_F}^{0,\gamma}, \underline{\QQ}_\ell)_\kappa) \bigg) \\
    &= q^{- \dim M_{\SLn}^0} \sum_{\gamma \in \Gamma} \tr (\Fr \ | \ IH_c(M_{\SLn, k_F}^{0}, \underline{\QQ}_\ell)_\kappa) \\
    &= q^{- \dim M_{\SLn}^0} \tr (\Fr \ | \ IH_c(M_{\SLn, k_F}^{0}, \underline{\QQ}_\ell)).
\end{align*}
\end{proof}

\begin{remark}
    We could weaken our assumptions by assuming that $n$ is an odd prime and $D$ arbitrary, since for odd $n$, $d' \equiv d \mod n$. Also, instead of $L = N^{d}$ we could assume that $L\otimes N^{- d}$ has a $\mathit{n}$th root. Finally, we expect that the proof extends to the case where $n$ is not prime, but it requires to control $G$-equivariance in the $\chi$-independence isomorphisms.
\end{remark}

\begin{remark}
    The same argument using our refinement (\autoref{fourier}) shows that under the same hypothesis, for $\kappa \in \hat{\Gamma}$,
    $$  \frac{1}{ |\Gamma|} \sum_{\nu \in \Gamma} \Big( \int_{M_{\SLn}^{L_\nu} (\Oo_F)^\sharp} f_{\alpha}^{e'} \ \mu_{\can} \Big) \kappa(\nu)  = q^{- \dim M_{\SLn}^0} \tr (\Fr \ | \ IH_c(M_{\SLn, k_F}^0, \underline{\QQ}_\ell)_{\kappa}).$$
\end{remark}

\subsection{Link to BPS cohomology when \texorpdfstring{$D = K$}{D=K}.} In this subsection we discuss a possible link between \autoref{mainthm} and the recent Hausel--Thaddeus type conjecture formulated in terms of BPS cohomology \cite{Davison}. It is speculative because the $BPS$ sheaf has not yet be constructed as $\ell$-adic sheaf.

Focusing on the case where $D=K_{\Cc}$, \autoref{mainthm} reads, for all $ d, e \in \ZZ/n\ZZ$,
 \begin{align} \label{p-adic D=K}
     \int_{M_{\SLn}^{L}(\Oo_F)^\sharp} f_\alpha^e  \ \mu_{\can} &= 
    \int_{M^e_{\PGLn}(\Oo_F)^\sharp} f_{\alpha_N}^d   \ \mu_{\can}
 \end{align}
 where $L \in \Pp ic^d(\Cc)$, assuming that $L\otimes N^{-d}$ has a $\mathit{n}$th root. 

In \cite{Davison}, authors construct a perverse sheaf $\phi_{BPS}$ on the good moduli space of the loop stacks $\Ll \Mm_{\SLn}^{L}$ and $\Ll \Mm_{\PGLn}^e$. The Euler characteristic specialises to invariants related to counting of $BPS$ (for Bogomol'nyi--Prasad--Sommerfield) states in physics, as explained in \cite[\textsection 1.2.14]{Davison}

They propose a Hausel--Thaddeus-type conjecture \cite[Conjecture 10.3.25]{Davison} which reads :  
\begin{equation} \label{BPSconj}
    H^*_{BPS} (\Ll M_{\SLn}^{L})_e \simeq H^*_{BPS} (\Ll M_{\PGLn}^{e})_d
\end{equation} 
where $\Ll M_{\SLn}^{L}$ denote the good moduli space of the loop stack $\Ll \Mm_{\SLn}^{L}$ and similarly for $\PGLn$. 

The $BPS$ cohomology of $\Ll M_{\SLn}^{L}$ inherits a $Z(\SLn) = \mu_n$ action. Interpreting $e$ as a character of $\mu_n$, $H^*_{BPS} (\Ll M_{\SLn}^{L})$ decomposes into a sum of isotypical components denoted $H^*_{BPS} (\Ll M_{\SLn}^{L})_e$. A similar decomposition is constructed on the $\PGLn$ side in \cite[Subsection 10.3.24]{Davison}. 

Motivated by the similarities between (\ref{p-adic D=K}) and (\ref{BPSconj}), we formulate the following conjecture. 

\begin{conjecture} \label{conjecture} Let $S = \Spec \Oo_F$ where $\Oo_F$ is a local ring of finite residue field $k_F$. Let $D=K_\Cc$ and $L \in \Pp ic^d(\Cc)$ for $d \in \ZZ$. Let $\alpha$ denote the gerbe $\Mm^{L}_{\widetilde{\SLn}} \longrightarrow \Mm^{L,\rig}_{\SLn}$. Let $q = |k_F|$. Using the notations introduced in Notation \ref{notation frob},  
    $$ \int_{M_{\SLn}^{L}(\Oo_F)^\sharp} f_\alpha  \ \mu_{\can} =  q^{- \dim M_{\SLn}^{L}} \tr( \Fr \ | \  H^*_{BPS} (\Ll M_{\SLn, k_F}^{L})_1  ).$$ 
\end{conjecture}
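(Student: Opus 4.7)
The plan is to adapt the template of Proposition \ref{prop link IH}, with $BPS$ cohomology in place of intersection cohomology. The argument is necessarily conditional on an $\ell$-adic construction of $\phi_{BPS}$ that is Frobenius-equivariant and compatible with the $Z(\SLn) = \mu_n$-action underlying the isotypical decomposition. Granting this, the two main tasks are: (a) rewriting the $p$-adic integral as a trace of Frobenius on an appropriate $\ell$-adic cohomology theory of the loop stack, and (b) identifying that cohomology with $H^*_{BPS}(\Ll M_{\SLn, k_F}^L)_1$.

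For task (a), I would follow the strategy of \cite[Theorem 4.16]{GWZ}: spread $(\Cc, D, L)$ out over a finitely generated $\ZZ$-algebra $R \subset \CC$ as in Remark \ref{spread out rmk}, choose a closed point $\Spec k_F \to B$ and a ring $\Oo_F$ above it, and reinterpret $\int_{M_{\SLn}^L(\Oo_F)^\sharp} f_\alpha \, \mu_{\can}$ as a count of $k_F$-points of the loop stack of $\Mm^{L,\rig}_{\SLn}$, weighted by the Hasse invariants of the gerbe. Since $D = K_\Cc$, the stack $\Mm^{L,\rig}_{\SLn}$ is only normal rather than smooth, so \cite[Theorem 4.16]{GWZ} does not apply verbatim; extending this orbifold-type formula to the singular symplectic setting, with the correct stratification of automorphisms appearing in the loop stack, is itself a non-trivial step. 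The linear power of $f_\alpha$ should then pick out the $e=1$ isotypical part under the scalar $\mu_n$-action on loops, by the same character-theoretic mechanism as in the smooth coprime case of \autoref{corTMScoprime}.

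For task (b), the identification with BPS cohomology would rely on an $\ell$-adic version of the cohomological integrality theorems of Davison and collaborators, combined with a relative support-type statement over the Hitchin base: once $\phi_{BPS}$ is constructed, one expects a perverse decomposition of $R\pi_*\phi_{BPS}$ along $\pi \colon \Ll M^L_{\SLn} \to \Aa$ whose Frobenius traces reassemble the loop-stack count from (a), restricted to the character $e = 1$. Combined with the codimension reductions and Hartog-type extension already exploited in the proof of \autoref{mainthm}, this would yield the conjectured identity with the normalisation $q^{-\dim M_{\SLn}^L}$.

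The main obstacle is clearly the $\ell$-adic construction of $\phi_{BPS}$, on which everything else rests and which is not a purely formal extension of the mixed Hodge module construction in \cite{Davison}. A secondary but substantial difficulty is the mismatch between the domain of the $p$-adic measure, cut out using the stable locus in the proof of \autoref{lemma measure}, and the natural support of $\phi_{BPS}$, which incorporates strictly semistable contributions through the loop stack: one needs a vanishing ensuring that the $e = 1$ BPS contribution from the complement of the stable open set is zero, analogous to the codimension $\geq 2$ argument used there.
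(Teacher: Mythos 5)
The statement you were asked to prove is labeled a \emph{conjecture} in the paper, and the paper offers no proof of it: immediately before stating Conjecture~\ref{conjecture}, the author writes that the discussion is ``speculative because the $BPS$ sheaf has not yet be[en] constructed as $\ell$-adic sheaf,'' and afterwards only points to Proposition~\ref{prop link IH} (proved under $D > K_\Cc$ and $n$ prime) as circumstantial evidence, via the coincidence of intersection and BPS cohomology for smooth moduli stacks and the observation that the $\GLn$-analogue in \cite{COW} matches BPS invariants for the power of the gerbe function corresponding to the spectral genus. So there is simply no ``paper's own proof'' to compare your attempt against. Your submission is, correctly, not a proof either: it is a conditional roadmap, and you yourself identify the fundamental gap — the absence of a Frobenius-equivariant $\ell$-adic $\phi_{BPS}$ — which is precisely the obstruction the paper names.

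Within those limits, your roadmap is broadly consistent with the motivation the paper provides (spreading out, recasting the $p$-adic integral as a Frobenius trace on loop-stack cohomology via an extension of \cite[Theorem~4.16]{GWZ}, and a support-type argument over the Hitchin base), and your second obstacle — the mismatch between the stable open $U$ defining $M^L_{\SLn}(\Oo_F)^\sharp$ and the natural support of $\phi_{BPS}$, which sees strictly semistable loops — is a genuine additional difficulty that the paper does not address. One inaccuracy worth flagging: you invoke ``the same character-theoretic mechanism as in the smooth coprime case of Corollary~\ref{corTMScoprime},'' but in that corollary $f_\alpha$ is identically $1$ on the $\SLn$ side (since $\alpha$ sits over the good moduli space and Brauer groups of local rings vanish); the isotypical selection there happens on the $\PGLn$ side through $f_{\alpha_N}^{d'}$. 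In the non-coprime $D=K_\Cc$ setting that the conjecture concerns, $\Mm^{L,\rig}_{\SLn}$ is strictly larger than $M^L_{\SLn}$, $f_\alpha$ is genuinely nontrivial, and how its single power singles out the $e=1$ BPS isotypical component is not a formal repetition of the coprime argument; this would need its own justification once $\phi_{BPS}$ exists.
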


Beyond the coprime case, Proposition \ref{prop link IH} gives some evidence for this conjecture, even though we assumed $D> K_\Cc$ there. Indeed, in that situation, the stack  $\Mm_{\SLn}^{L}$ is smooth and in this case intersection and $BPS$ cohomologies coincide \cite{Davison}. Moreover, in the context of $\GLn$-Higgs bundles, $p$-adic integrals are related to $BPS$ invariants (see \cite{COW} and \cite[Lemma 6.2]{GWZ24}). In \cite{COW}, the relation only holds for a given power of the gerbe function which corresponds to the genus $g'$ of the spectral curves, which is congruent to $e'$ modulo $n$ precisely when $e=1$.






\appendix 

\section{Comparison of gerbes}
\label{appendix}


Let $S = \Spec k$ for a field $k$. As before, pick a curve $\Cc$ over $S$, line bundles $L, D$ over $\Cc$ and consider the coarse moduli space of determinant $L$, traceless semi-stable Higgs bundles $M^L_{\SLn}$. There are two ways to describe $M^L_{\SLn}$ as a good moduli space of a stack endowed with a $\mu_n$-gerbe. As before, we consider gerbes with respect to the étale topology. 

\begin{Construction}
    This construction was used in \cite{GWZ} and the present work. Recall the stack $\Mm_{\SLn}^L$ whose objects are :
    \begin{align*}
    \Ob(\Mm_{\SLn}^L(T)) = \Bigg\{ (E, \phi) \ &| \ E \in \Vv ec_n(\Cc)(T), \  \phi \in H^0(\Cc, \mathfrak{sl}_n(E)\otimes D), \\
                                    & \ \text{semi-stable}, \ \det(E) \simeq q^*L \in \Pp ic_{C \times_S T/S}^d  (S) \Bigg\} 
\end{align*} 
where $q \colon T \times_S \Cc \longrightarrow \Cc$. The gerbe $\alpha_n$ is given by the $\mu_n$-rigidification defined in \cite{ACV}, 
$$\Mm_{\SLn}^L \xrightarrow{\alpha_n} \Mm_{\SLn}^{L,\rig},$$
which is obtained by removing the scalar action from the automorphisms.
\end{Construction}

\begin{Construction}
This construction appears in \cite{HT}. Note that in the coprime case, $\Mm_{\SLn}^{L, \rig} \simeq M^L_{\SLn}$. Fix a basepoint $\varepsilon \in \Cc(S)$. There is a universal projective bundle $\PP\EE$ over $\Cc \times \Mm_{\SLn}^{L, \rig}$. Take  $U \longrightarrow \Mm_{\SLn}^{L,\rig}$. Then, let $\alpha_{HT}$ be the stack defined by :
$$ \Ob ( \alpha_{HT}(U)) = \big\{  \EE^{\varepsilon}_{U} \in \Bb un_{\SLn}(U) \ | \ \PP(\EE^{\varepsilon}_{U}) \simeq \varepsilon^*\PP\EE_{|U} \big\}.$$ 
We call $\EE^{\varepsilon}_{U}$ a lift over $U$. Isomorphisms between lifts over $U$ are given by $\mu_n$-torsors over $U$. The étale-local existence of lifts means that $\alpha_{HT}$ is an étale $\mu_n$-gerbe. 
\end{Construction}
    
\begin{remark}
    Using the seesaw principle, it is equivalent to rephrase the condition $\det(E) \simeq q^*L$ in $\Ob(\Mm_{\SLn}^L(T))$ as $\forall \ t \in |T|, \  \det(t^*E) \simeq L \in \Pp ic^d  (\Cc) \text{ and } \det(\varepsilon^*E) \simeq \Oo_T $. The latter is used in \cite{HT}. 
\end{remark}

\begin{lemma}
Both gerbes are isomorphic, i.e. there is a commuting diagram :
$$\begin{tikzcd}
    \Mm_{\SLn}^L \arrow[dr, "\alpha_n"] \arrow[rr, "\sim"] & & \alpha_{HT} \arrow[dl] \\
    &\Mm_{\SLn}^{L,\rig} &
\end{tikzcd}$$
\end{lemma}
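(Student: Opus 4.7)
The plan is to construct a morphism of stacks $F : \Mm_{\SLn}^L \longrightarrow \alpha_{HT}$ over $\Mm_{\SLn}^{L,\rig}$ and to verify that it intertwines the $\mu_n$-bandings of source and target. Any morphism between two $\mu_n$-gerbes over a common base which is the identity on the bands is automatically an equivalence (see e.g.\ \cite{Giraud}), so this suffices.

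I define $F$ by restriction to the basepoint $\varepsilon \in \Cc(S)$. Let $T\to\Mm_{\SLn}^{L,\rig}$ and let $(E,\phi) \in \Mm_{\SLn}^L(T)$ be a lift. By the preceding Remark (seesaw), the condition $\det E \simeq q^*L$ is equivalent to the pointwise condition $\det(t^*E)\simeq L$ for $t\in|T|$ together with an isomorphism $\det(\varepsilon^*E)\simeq\Oo_T$. The latter says exactly that $\varepsilon^*E \in \Bb un_{\SLn}(T)$. On the other hand, by the universal property the map $T\to\Mm_{\SLn}^{L,\rig}$ induced by $(E,\phi)$ classifies the projective Higgs bundle $(\PP E,[\phi])$, so
$$\PP(\varepsilon^*E) \;=\; \varepsilon^*\PP E \;\simeq\; \varepsilon^*\PP\EE_{|T},$$
which is precisely the condition required for $\varepsilon^*E$ to be an object of $\alpha_{HT}(T)$ over the given map. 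Setting $F(E,\phi) := \varepsilon^*E$, and defining $F$ on morphisms by pullback along $\varepsilon$, yields a morphism of stacks which by construction is strictly compatible with the two projections to $\Mm_{\SLn}^{L,\rig}$. Compatibility with base change $T'\to T$ follows from the base change isomorphism for $\varepsilon$.

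It remains to check that $F$ is the identity on the $\mu_n$-bands. The embedding $\mu_n(T)\hookrightarrow\Aut_T(E,\phi)$ used to rigidify $\Mm_{\SLn}^L$ sends $\zeta$ to the scalar automorphism $\zeta\cdot\id_E$, and pulling back along $\varepsilon$ produces $\zeta\cdot\id_{\varepsilon^*E}$, which is the element of $\mu_n(T)\hookrightarrow\Aut_T(\varepsilon^*E)$ used to band $\alpha_{HT}$ (the latter being a $\mu_n$-gerbe because different $\SLn$-lifts of $\varepsilon^*\PP\EE_{|U}$ differ by a $\mu_n$-torsor on $U$). Hence $F$ is an equivalence of $\mu_n$-gerbes over $\Mm_{\SLn}^{L,\rig}$. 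The main piece of bookkeeping is verifying that the equivalence furnished by the Remark provides the $\SLn$-structure on $\varepsilon^*E$ functorially in $T$; this reduces to applying the seesaw principle to $\det E\otimes q^*L^{-1}$ and using that $\Pp ic$ commutes with étale base change on $T$, which also justifies that the condition defining $\alpha_{HT}$ only needs to be checked étale-locally on $U$.
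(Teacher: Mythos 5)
Your proof is correct and follows essentially the same route as the paper: both define the comparison map $(E,\phi)\mapsto\varepsilon^*E$, check that $\varepsilon^*E$ lands in $\alpha_{HT}$ over the point classified by $(\PP E,[\phi])$, and conclude by the general fact that a (banded) morphism of $\mu_n$-gerbes over a common base is automatically an equivalence. Your explicit verification that $F$ intertwines the $\mu_n$-bands is left implicit in the paper, so this is a welcome extra bit of bookkeeping rather than a genuinely different argument.
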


\begin{proof}
A section of $\alpha_n$ over $U \longrightarrow \Mm_{\SLn}^{L,\rig}$ gives a universal (Higgs) bundle $\EE_U$ of determinant $q^*L \in \Pp ic(\Cc \times U )$. Then, $\det(\varepsilon^* \EE_U) \simeq \Oo_U$, and the universality implies that the projectivisation of $\EE_U$ is $\PP\EE_{|U}$. Hence, $\varepsilon^* \EE_U$ provides a section of $\alpha_{HT}$ over $U$. We just constructed a map of $\mu_n$-gerbes $\alpha_n \longrightarrow \alpha_{HT}$. As for torsors, a map between $\mu_n$-gerbes is necessarily an isomorphism. 
\end{proof}




\AtNextBibliography{\footnotesize}
\printbibliography
\end{document}